\numberwithin{equation}{section}
\newtheorem{thm}{Theorem}[section]
\newtheorem{lemma}[thm]{Lemma}
\newtheorem{prop}[thm]{Proposition}
\newtheorem{cor}[thm]{Corollary}
{\theorembodyfont{\rmfamily}

\newtheorem{rmk}[thm]{Remark}
}
\newcommand{\qed}{\hfill \mbox{\raggedright \rule{.07in}{.1in}}}
\newenvironment{proof}{\vspace{1ex}\noindent{\bf
Proof}\hspace{0.5em}}{\hfill\qed\vspace{1ex}}
\newenvironment{pfof}[1]{\vspace{1ex}\noindent{\bf Proof of
#1}\hspace{0.5em}}{\hfill\qed\vspace{1ex}}
\newcommand{\R}{{\mathbb R}}
\newcommand{\C}{{\mathbb C}}
\newcommand{\N}{{\mathbb N}}
\renewcommand{\P}{{\mathbb P}}
\newcommand{\B}{{\mathcal B}}
\newcommand{\M}{{\mathcal M}}
\newcommand{\ii}{\mathfrak{i}}
\newcommand{\dd}{\mathrm{d}}
\newcommand{\E}{\mathbb{E}}
\def\eps{\varepsilon}
\def\ae{{\em a.e.} }
\title{Darling--Kac theorem for renewal shifts \\ in the absence of 
regular variation}
\author{P\'eter Kevei\thanks{MTA-SZTE Analysis and Stochastics Research Group,
Bolyai Institute, Aradi v\'ertan\'uk tere 1, 6720 Szeged, Hungary; email: 
kevei@math.u-szeged.hu}\quad and\quad Dalia Terhesiu\thanks{Department of Mathematics, 
University of Exeter, North Park Road 
Exeter, UK, EX4 4QF; email: daliaterhesiu@gmail.com }}
\begin{document}

\maketitle

\begin{abstract}
We study null recurrent renewal Markov chains with renewal distribution in the domain of 
geometric partial attraction of a 
semistable law. Using the classical procedure of inversion, we derive a limit theorem 
similar to the Darling--Kac law along subsequences and obtain some interesting 
properties of  the limit distribution. Also in this context, we obtain a Karamata type 
theorem along subsequences for positive operators. In both results, we identify the 
allowed class of subsequences. We provide several examples of nontrivial  infinite 
measure preserving systems to which these results apply.
\end{abstract}

\section{Introduction and summary of main results}\label{sec-intro}

We recall that regular variation is an essential condition for the existence of a
Darling--Kac law~\cite{DarlingKac57}.  Restricting to  the simple setting of one-sided 
null recurrent renewal chains, our aim is to understand what happens if the regular 
variation is replaced by a weaker assumption on the involved  `renewal' distribution. As 
we explain in the sequel, we will assume that this distribution is in the domain of  
geometric partial attraction of a semistable law, a subclass of infinitely divisible 
laws. Among the main references for ground results on semistable laws, we recall that 
the behaviour of the associated characteristic function has been first understood 
by Kruglov \cite{Kruglov} and that a probabilistic approach in understanding  such laws 
has been developed by Cs\"org\H{o} \cite{Csorgo1}. For more recent advances on `merging 
results' we refer to Cs\"org\H{o} and Megyesi \cite{CM1}, Kevei \cite{PK1}, and 
references therein.

The classical Darling--Kac law for one-sided null recurrent renewal shifts / Markov 
chains is 
recalled in Subsection~\ref{sec-DKst}. The analogue of this law in the semistable setting 
is contained in Section~\ref{sec-dualityss}; this is the content of 
Theorem~\ref{thm-merge}. Several properties of the limit distribution appearing in 
Theorem~\ref{thm-merge} are discussed in Section~\ref{sec-distrib}. In particular, we 
study the asymptotic behaviour of this distribution at $0$ and $\infty$. Although, as 
recalled in Section~\ref{sec-distrib}, the asymptotic behaviour at $\infty$
can be read off from previous results, we  note the somewhat surprising result 
Theorem~\ref{thm:Hat0} that gives the asymptotic behaviour
of this distribution at $0$.  
In Section~\ref{sec-renmeas} we determine the asymptotics of the renewal function in the 
semistable setup, and extend this result for positive operators.
In Section~\ref{sec-examples} we provide
a number of examples (notably, perturbed Wang maps and piecewise linear Fibonacci maps) to 
which Theorem~\ref{thm-merge} applies. The examples considered in 
Section~\ref{sec-examples} are dynamical systems that are isomorphic to Markov chains. In 
Section~\ref{sec-dets}, we discuss the application of Theorem~\ref{thm-merge} to specific 
dynamical systems that are not isomorphic to a Markov chain. Finally, some technical 
proofs are contained in the Appendix.

\subsection{Darling--Kac law for null recurrent renewal chains under regular 
variation}
\label{sec-DKst}

Fix a probability distribution $(f_k)_{k \geq 0}$, $\sum_{k=0}^\infty f_k =1$, and
consider the Markov renewal chain  $(X_n)_{n \geq 0}$, $X_n \in \N_0=\N\cup \{0\}$
with transition probabilities
\begin{equation}
\label{eq-plk}
p_{\ell, k} := \P(X_{n+1} = k | X_n = \ell) = 
\begin{cases}
f_k, & \ell = 0,\\
1, & k = \ell-1, \ \ell \geq 1,\\
0, & \text{otherwise.}
\end{cases}
\end{equation}
Clearly, $X_n$ is a recurrent Markov chain, with unique invariant measure
\begin{equation} \label{eq:pi-def}
\pi_n = \pi_0 \sum_{i=n}^{\infty} f_i, \quad n \geq 1, 
\quad \text{and } \, \pi_0 > 0.
\end{equation}
The chain is null recurrent (i.e.~the invariant measure is infinite) if and only if
$\sum_{k=1}^\infty k  f_k = \infty$, which we assume in the following.

Assume that the chain starts from 0, i.e.~$X_0=0$, and let 
$0 = Z_0 < Z_1 < Z_2 < \ldots$ denote the consecutive return times to 0. Since the Markov 
chain is recurrent, all these random variables are a.s.~finite, and by the Markov property
\begin{equation*} \label{eq:return}
Z_n = \tau_1 + \tau_2 + \ldots + \tau_n, \quad n \geq 1,
\end{equation*}
where $\tau, \tau_1, \tau_2, \ldots$ are iid random variables, with
distribution $\P(\tau = k ) = f_{k-1}$, $k\geq 1$.

Let
\begin{equation*} \label{eq:S-def}
S_n = \sum_{j=0}^{n-1} 1_{X_j=0}, \quad n \geq 1,
\end{equation*}
denote the occupation time of $0$, i.e.~the number of visits to $0$ up to time $n-1$.
Recall the duality rule between $S_n$ and $Z_m$
\begin{equation} \label{eq-duality}
S_n \geq m \ \, \Longleftrightarrow \ Z_{m-1} \leq n-1,
\end{equation}
which means that the number of visits to the state $0$ before time 
$n$ is at least $m$ if and only if the  $(m-1)$st return takes place before time $n$.

\medskip

Up to now everything holds true for a general recurrent Markov renewal chain. In what 
follows, we recall how a distributional limit theorem for $Z_n$ translates to a limit 
theorem for $S_n$. To do so, we assume that $(f_j )_{j \geq 0}$ is  in the domain of 
attraction of an $\alpha$-stable, $\alpha <1$; that is,
\begin{equation*} \label{eq:stable-ass}
\sum_{j\geq n} f_j=\ell(n)n^{-\alpha},
\end{equation*}
for a slowly varying function $\ell$. Then
\begin{align} \label{eq-stable}
\frac{Z_n}{n^{1/\alpha}\ell_1(n)}\rightarrow^d Z_\alpha,
\end{align}
with the norming sequence $n^{1/\alpha} \ell_1(n)$ being the asymptotic inverse of 
$n^{\alpha}/\ell(n)$, where $Z_\alpha$ is an $\alpha$-stable law and $\to^d$ stands for 
convergence in distribution. 
In the following all nonspecified limit relations are meant as $n \to \infty$.
It is known (see, for instance, Bingham \cite{Bingham1}) that the stable limit law for 
$Z_n$ can be translated into a Darling--Kac law for $S_n$.

Let $\mathcal{M}_\alpha$ be a positive random variable distributed according to the 
normalised Mittag-Leffler distribution of
order $\alpha$, that is  $\E (e^{z\mathcal{M}_\alpha})=\sum_{p=0}^\infty 
\Gamma(1+\alpha)^pz^p/\Gamma(1+p\alpha)$ 
for all $z\in\C$.  We recall that $\M_\alpha=^d (Z_\alpha)^{-\alpha}$ and sketch 
the argument for obtaining a Darling--Kac law from~\eqref{eq-duality} 
and~\eqref{eq-stable}.

Let $b(n)=n^{1/\alpha} \ell_1(n)$ and let 
$a(n) = n^\alpha / \ell(n)$ be its asymptotic inverse, that is 
$a(b(n)) \sim n$. In what follows, to ease notation we suppress the integer part. 
Using~\eqref{eq-duality}, and $b(a(n) x) \sim x^{1/\alpha} n$, $n \to \infty$,
we obtain
\[
\begin{split}
\P (S_n \ge a(n) x)
&= \P (Z_{a(n) x -1 }\le n -1) \\
&= \P \Big(\frac{Z_{ a(n) x -1 }}{b( a(n) x -1 )}\leq  
\frac{n -1 }{b( a(n) x -1) }\Big) \\
& \rightarrow \P ( Z_\alpha \leq x^{-1/\alpha} ) \\
& = \P( \M_\alpha \ge x).
\end{split}
\]
Hence, $S_n/ a(n) \to^d \M_\alpha$,
which gives the Darling--Kac law in this simplified setting.

As already mentioned, in what follows we employ the inversion procedure 
described above weakening the assumption on $\tau$.  Namely, we will assume
that $\tau$ is in the domain of geometric partial attraction of a semistable law of  
order $\alpha\in (0,1)$, as recalled in Section~\ref{sec-semist}.

\subsection{Renewal chain, induced renewal chain}
\label{subsec-renrec}

Put $X = \N_0^{\N_0}$ and let $T:X \to X$ be the shift map.
Introduce the cylinders 
\[
[e_0e_1 \ldots e_{k-1}] := \{ x=(x_0, x_1, \ldots) \in X\, : \, 
x_i = e_i, \ i=0,1, \ldots, k-1 \}.
\]
We define the $T$-invariant measure $\mu$ as
\[
\mu([e_0e_1 \ldots e_{k-1}]) = \mu([e_0]) p_{e_0e_1} \cdots p_{e_{k-2} e_{k-1}},
\]
where $\mu([j]) = \pi_j$ given in \eqref{eq:pi-def}. 
The measure extends uniquely to the $\sigma$-algebra generated by 
the cylinder sets.
For simplicity, we assume that $\mu([0]) = \pi_0 = 1$. 

Let $Y = [0] = \{ x \in X : x_0 = 0\}$, and decompose
$$
Y = \cup_{k \geq 0} C_k, \quad \text{ where } C_k = [0,k,k-1,k-2,\dots, 0].
$$
The cylinders $C_k$ are pairwise disjoint, and their
measures are given by
\[
\mu(C_{k}) = \mu(Y) p_{0,k} p_{k,k-1} \cdots p_{1,0} = f_k.
\] 

We recall the definition of the \emph{induced shift on $Y$} and associated `induced 
renewal chain'. For $y\in Y$, let $\tau(y) = \min\{ n \geq 1 : T^n(y) \in Y\}$ and $T_Y = 
T^\tau$. The probability measure $\nu=\mu(Y)^{-1}\mu|_Y = \mu|_Y$ is  $T_Y$-invariant.
To see this it is enough to show that $\nu(C_k) = \nu(T_Y^{-1} C_k)$ for any $k \geq 0$. 
Noting that
\[
T_Y^{-1}(C_k) = \cup_{\ell=0}^\infty [0,\ell, \ell-1, \ldots, 1, 0, k, k-1, \ldots , 1, 0]
\]
we have
\[
\begin{split}
\nu( T_Y^{-1} C_k )&  = \sum_{\ell=0}^\infty \nu([0,\ell, \ell-1, \ldots, 1, 0, k, k-1, 
\ldots , 1, 0] ) \\
& = \sum_{\ell=0}^\infty \frac{\mu([0]) f_\ell f_k}{\mu([0])} \\
& = f_k = \nu(C_k).
\end{split}
\]

We note that $C_k = \{ y \in Y : \tau(y) = k+1\}$ and that $T_Y$ can be regarded as the 
shift on the space $(\{ C_k \}_{k\ge 0})^{\N_0}$.  Given that $\B_Y$ is the 
$\sigma$-algebra generated by cylinders, the induced  shift $(Y,\B_Y, T_Y,\nu)$ is a 
probability measure preserving transformation.

\subsection{Renewal sequences  and transfer operators associated with  Markov shifts }
\label{ssec-RT}

In the set-up of Subsection~\ref{subsec-renrec}, we recall that  the renewal sequence 
$\{u_n\}_{n\ge 1}$ associated with the recurrent
shift $(X,\B_X, T,\mu)$ is given by
$$
u_0=1,\quad u_n=\sum_{j=1}^n f_j u_{n-j}.
$$
We let   $L: L^1(\mu)\to L^1(\mu)$ be 
the transfer operator associated with the shift $(X,\B_X, T,\mu)$ defined by 
$\int_{ X} L^n v \cdot  w \, \dd \mu=\int_{ X} v \cdot  w\!\circ\!T^n \, \dd \mu$, 
$n\ge 1$, $v \in L^1(\mu)$, $w \in L^\infty(\mu)$.
Roughly, the operator $L$ describes the evolution of (probability) densities under 
the action of $T$. Alternatively, the operator $L$
acting on piecewise constant functions (that is, constant  functions on cylinder sets)  
can be identified with the stochastic matrix  with entries
$p_{\ell, k}$ given in~\eqref{eq-plk}. Moreover, the following holds \ae on $Y=[0]$ 
(for a precise reference, see, for instance, Aaronson~\cite[Proposition 
5.1.2 and p.~157]{Aaronson}),
\begin{equation}
\label{eq-tranren}
p_{0,0}^{(n)}=L^n1_{[0]}=L^n(1_Y)= u_n=\mu(Y\cap T^{-n}Y),
\end{equation}
and the equality $L^n1_{[s]}=\mu([s]\cap T^{-n}[s])$ hold \ae on any cylinder $[s]$.
Under the assumption that the tail sequence $\mu(\tau>n)$ is regularly varying with some 
index in $[0,1]$,
the asymptotic behaviour of the partial sum $\sum_{j=0}^{n-1} u_j=\sum_{j=0}^{n-1} L^j 
1_Y$, 
is well understood; for results in terms of renewal sequences see, for 
instance,~Bingham et~al.\cite[Section 8.6.2]{BGT}; for results stated in terms of both 
average transfer 
operators and 
renewal sequences we refer to~\cite[Chapter 5]{Aaronson}.

The asymptotic behaviour of the partial sum $\sum_{j=0}^{n-1} L^j 1_Y$ has also been 
understood for several classes of infinite measure preserving systems $(X,\B_X, T,\mu)$  
that are not isomorphic to renewal shifts.
Provided the existence of a suitable reference set $Y\subset X$, one considers the return 
time  $\tau$ to $Y$ and obtains a finite measure preserving system $(Y,\B_Y, 
T^\tau,\mu_Y)$.
In case $\mu_Y(\tau>n)$ is regularly varying with index $\alpha<1$,  under certain 
assumptions on $(Y,\B_Y, T^\tau,\mu_Y)$, it has been shown that 
for  $a_n=C\mu_Y(\tau>n)(1+o(1))$, with $C>0$ (depending on the parameters of the map 
$T$), $a_n^{-1}\sum_{j=0}^{n-1} L^j v$  convergences \emph{uniformly} on suitable 
compact subsets  of $X$ and suitable observable $v$.
For a precise statement we refer to the work of Thaler~\cite{Thaler83}; for more recent 
results see Thaler and Zweim\"uller \cite{ThalerZweimuller06}, Melbourne and 
Terhesiu \cite{MT13}, and references therein.

In the present work we assume 
that $\tau$ is in the domain of geometric partial attraction of a semistable law of  
order $\alpha\in (0,1)$ (as in Section~\ref{sec-semist}).
The task is to obtain a Karamata type theorem along subsequences, identifying  the 
allowed class of subsequences. For renewal shifts 
(and implicitly, infinite measure 
preserving systems  that come equipped with an iid sequence $(\tau \!\circ\! T_Y^n)$), 
this type of result was obtained by  
Kevei~\cite{PK2}
and in Section~\ref{sec-renmeas} we recall this result. The new result in this context  
is Theorem~\ref{prop-genav}, which  gives a Karamata type theorem along subsequences for 
positive operators.  In Section~\ref{sec-dets}, we discuss its application to infinite 
measure preserving specific  systems  not isomorphic to renewal shifts; in particular we 
obtain uniform convergence  of the partial sum of transfer operators along subsequences 
on suitable sets.

\section{Semistable laws}
\label{sec-semist}

The class of semistable laws, introduced by Paul L\'evy in 1937, is an important subclass 
of infinitely divisible laws. For definitions, properties, and history of semistable laws 
we refer to Sato \cite[Chapter 13]{Sato}, Meerschaert and Scheffler \cite{MS}, Megyesi 
\cite{M}, Cs\"org\H{o} and Megyesi \cite{CM1}, and the references therein. Here we 
summarise the main results from 
\cite{M, CM1}, and we  specialise these results to nonnegative semistable 
laws.

\subsection{Definition and some properties}
\label{subsec-ss}
Semistable laws are limits of centred and normed sums of iid random variables along 
subsequences $k_n$ for which 
\begin{equation} \label{eq:H} 
k_{n}<k_{n+1} \ \text{for } n\geq 1 \ \text{and } 
\lim_{n\to\infty}\frac{k_{n+1}}{k_n}=c > 1
\end{equation}
hold. Since $c=1$ corresponds to the stable case (\cite[Theorem 2]{M}), we assume 
that $c > 1$. 
The simplest such a sequence is 
\begin{equation*} \label{eq:H2}
k_n = \lfloor c^n \rfloor, 
\end{equation*}
where $\lfloor \cdot \rfloor$ stands for the (lower) integer part.
In what follows we let $c$ be as defined in \eqref{eq:H}. 

The characteristic function of a nonnegative semistable random variable $V$ has the form
\begin{equation*} 
\E e^{\ii t V} =
\exp \left\{ \ii t a + \int_0^\infty (e^{\ii t x } - 1 ) \mathrm{d} R(x) \right\}, 
\end{equation*}
where $a \geq 0$, and 
$M: (0,\infty) \to (0, \infty)$ is a logarithmically periodic function with period 
$c^{1/\alpha} > 1$, i.e.~$M(c^{1/\alpha} x ) = M(x)$ for all $x > 0$, such that 
$- R(x) := M(x)/x^{\alpha}$ is nonincreasing for $x > 0$, $\alpha \in (0,1)$. 
We further assume that $V$ is nonstable, that is $M$ is not constant.

\subsection{Domain of geometric partial attraction}
\label{subsec-DGPA}

In the following $X, X_1, X_2, \ldots $ are iid random variables with distribution 
function $F(x) = \P ( X \leq x )$. 
We fix a semistable random variable $V=V(R)$ with characteristic 
and distribution function 
\begin{equation} \label{eq:semistable-chf-df}
\E e^{\ii t V} =
\exp \left\{  \int_0^\infty (e^{\ii t x } - 1 ) \mathrm{d} R(x) \right\}, \quad 
G(x) = \P ( V \leq x).
\end{equation}
The random variable $X$ belongs to the domain of 
geometric partial attraction of the semistable law $G$ if there is a subsequence $k_n$ 
for which (\ref{eq:H}) holds, and a norming and  a centring sequence $A_n, B_n$, such 
that
\begin{equation} \label{eq-dgp-conv}
\frac{\sum_{i=1}^{k_n} X_i }{A_{k_n}} - B_{k_n} \rightarrow^d V.
\end{equation}

It turns out that without loss of generality we may assume that 
\begin{equation*} \label{eq:An}
A_n = n^{1/\alpha} \ell_1(n)
\end{equation*}
with some slowly varying function $\ell_1$ (see \cite[Theorem 3]{M}).
In order to characterise the domain of geometric partial attraction we need some further 
definitions. As $k_{n+1} / k_n \to c > 1$, for any 
$x$ large enough there is a unique $k_n$ such that $A_{k_n} \leq x < A_{k_{n+1}}$. Define
\begin{equation*} \label{eq:delta}
\delta(x) = \frac{x}{A_{k_n}}.
\end{equation*}
Note that the definition of $\delta$ does depend on the norming sequence.
Finally, let  
\begin{equation} \label{eq:ellell}
x^{-\alpha} \ell(x) : = \sup \{ t : t^{-1/\alpha} \ell_1(1/t) > x \}.
\end{equation}
Then $x^{1/\alpha} \ell_1(x)$ and $y^\alpha / \ell(y)$ are asymptotic inverses of each 
other, and
\begin{equation} \label{eq:ellell2}
x^{1/\alpha} \ell_1(x) \sim \inf \{ y : x^{-1} \geq y^{-\alpha} \ell(y) \}. 
\end{equation}
Thus $\ell$ and $\ell_1$ asymptotically determines each other.
For properties of asymptotic inverse of regularly varying functions we refer to
\cite[Section 1.7]{BGT}.

By Corollary 3 in \cite{M} (\ref{eq-dgp-conv}) holds on the 
subsequence $k_n$ with norming sequence $A_{k_n}$ if and only if 
\begin{equation} \label{eq-dgp-distf}
\overline F(x) := 1 - F(x) = \frac{\ell(x)}{x^\alpha} [ M(\delta(x)) + h(x) ],
\end{equation}
where $h$ is right-continuous error function such that
$\lim_{n \to \infty} h(A_{k_n} x ) = 0$, whenever $x$ is a continuity point of $M$.
Moreover, if $M$ is continuous, then $\lim_{x \to \infty} h(x) = 0$.
(We note that, contrary to the remark after Corollary 3 in \cite{M}, it is not true that 
for the subsequence $k_n = \lfloor c^n \rfloor$ one can replace $\delta(x)$ by $x$ in 
\eqref{eq-dgp-distf}. This holds when $\ell_1(x) \equiv 1$, but not in general.)

Since $\alpha <1$ there is no need for centring in \eqref{eq-dgp-conv}, and we have
\[
\frac{\sum_{i=1}^{k_n} X_i}{k_n^{1/\alpha} \ell_1(k_n)}
\rightarrow^{d} V,
\]
where $V$ has characteristic function as in \eqref{eq:semistable-chf-df}.

\subsection{Possible limits}
\label{subsec-limits}

We assume that for the distribution function of $X$ \eqref{eq-dgp-distf} holds. It turns 
out that on different subsequences there are different limit distributions.
Now we determine the possible limit distributions along subsequences. 
We say that $u_n$ \textit{converges circularly} to 
$u \in (c^{-1},1]$, $u_n \stackrel{cir}{\to} u$, if $u \in (c^{-1},1)$ and $u_n \to u$ 
in the usual sense, or $u =1$ and $u_n$ has limit points $1$, or $c^{-1}$, or both.
For $x > 0$ (large) we define the position parameter as
\begin{equation} \label{eq:def-gamma}
\gamma_x  = \gamma(x) =\frac{x}{k_n}, \quad \text{where } k_{n-1} <  x \leq k_n.
\end{equation}
Note that by \eqref{eq:H}
\[
 c^{-1} =\liminf_{x \to \infty} \gamma_x < \limsup_{x \to \infty } \gamma_x = 1.
\]

The definitions of the parameter $\gamma_n$ and the circular convergence 
follow the definitions in \cite[p.~774 and 776]{KC}, and are slightly different from 
those in \cite{M}.

From Theorem 1 \cite{CM1} we see that (\ref{eq-dgp-conv}) holds along a subsequence 
$(n_r)_{r=1}^\infty$ (instead of $k_n$) if and only if 
$\gamma_{n_r} \stackrel{cir}{\to} \lambda \in (c^{-1}, 1]$ as $r \to \infty$. In this 
case, by \cite[Theorem 1]{CM1} (or directly from the relation
$- R_\lambda(x) = \lim_{r \to \infty} n_r \overline F(A_{n_r} x)$) the L\'evy function of 
the limit
\begin{equation} \label{eq:def_Rlambda}
R_\lambda(x) = -\frac{M(\lambda^{1/\alpha} x)}{x^\alpha}. 
\end{equation}
Recall the notation in \eqref{eq:semistable-chf-df}. For any $\lambda \in (c^{-1}, 1]$ 
let $V_\lambda$ be a semistable random variable with characteristic 
and distribution function 
\begin{equation} \label{eq:semistable-chf-df-lambda}
\E e^{\ii t V_\lambda} =
\exp \left\{  \int_0^\infty (e^{\ii t x } - 1 ) \mathrm{d} R_\lambda(x) \right\}, \quad 
G_\lambda(x) = \P ( V_\lambda \leq x).
\end{equation}

Thus,
\begin{equation} \label{eq-conv-subseq}
\frac{\sum_{i=1}^{n_r} X_i} {n_r^{1/\alpha} \ell_1(n_r)} \to^d V_{\lambda}
\quad \text{as } r \to \infty,
\end{equation}
whenever $\gamma_{n_r} \stackrel{cir}{\to} \lambda$.

\section{Duality argument in the semistable setting}
\label{sec-dualityss}

Let us fix $\alpha \in (0,1)$, $c > 1$, the semistable law $V$ as in 
\eqref{eq:semistable-chf-df}, and a slowly varying function $\ell_1$.
Recall the definitions of $X_n, Z_n$ and $S_n$ from Subsection~\ref{sec-DKst}. 
Then $\tau, \tau_1, \tau_2, \ldots$ is an iid sequence with distribution 
function $F(x)= \P (\tau \leq x)$. Throughout the 
remainder of this paper, we assume that the tail $\overline F=1-F$ 
satisfies \eqref{eq-dgp-distf} for some $k_n$ for which \eqref{eq:H} holds,
and for the slowly varying function $\ell$ defined through $\ell_1$ 
in \eqref{eq:ellell}.\footnote{In fact, here we could assume that $\overline F$ satisfies 
the discrete version of \eqref{eq-dgp-distf} and extend $\ell$ and $h$ such that 
$\overline F$ satisfies \eqref{eq-dgp-distf}; see Section~\ref{sec-discr}.}
We recall that this assumption is equivalent to
\[
\frac{\sum_{i = 1}^{k_n} \tau_i}{k_n^{1/\alpha} \ell_1 (k_n)} 
\rightarrow^{d} V. 
\]
Moreover, note that (\ref{eq-conv-subseq}) holds whenever
$\gamma(n_r) \stackrel{cir}{\to} \lambda$ as $r \to \infty$.

Let $a_n = n^\alpha/\ell(n)$ be the asymptotic inverse of $n^{1/\alpha} \ell_1(n)$, i.e.
\begin{equation} \label{eq:bn}
a_n^{1/\alpha} \ell_1(a_n) \sim n.
\end{equation}
Clearly, $a_n$ can be chosen to be an integer sequence.
Recall the definition of the positional parameter in \eqref{eq:def-gamma}.

\begin{thm} \label{thm-merge}
If $\gamma(a_{n_r}) \stackrel{cir}{\to} \lambda \in (c^{-1}, 1]$, then for any $x > 0$
\begin{equation} \label{eq-def-H}
\lim_{r \to \infty} \P ( S_{n_r}  / a_{n_r} \leq x ) = 
\P \left( (V_{h_\lambda(x)} )^{-\alpha} \leq x \right)
=: H_\lambda(x),
\end{equation}
where
\begin{equation*} \label{eq:def_h}
h_\lambda(x) = \frac{\lambda x}{c^{\lceil \log_c (\lambda x) \rceil }}. 
\end{equation*}
More generally, the following merging result holds
\begin{equation} \label{eq-merge}
\lim_{n \to \infty} \sup_{x > 0} | \P ( S_n  \geq a_n x) -  
\P ( V_{\gamma(a_n x)} \leq x^{-1/\alpha}) | = 0.
\end{equation}
\end{thm}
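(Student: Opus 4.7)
The plan is to adapt the inversion sketch of Subsection~\ref{sec-DKst} to the semistable regime, using \eqref{eq-conv-subseq} (and its uniform merging upgrade) in place of the stable limit \eqref{eq-stable}. Concretely, by the duality~\eqref{eq-duality}, setting $m = \lceil a_n x \rceil$,
\[
\P(S_n \geq a_n x) = \P(Z_{m-1} \leq n-1) = \P\!\left(\frac{Z_{m-1}}{A_m} \leq \frac{n-1}{A_m}\right),
\]
where $A_m := m^{1/\alpha}\ell_1(m)$. The inverse-asymptotic relation~\eqref{eq:bn} together with slow variation of $\ell_1$ gives $A_m \sim x^{1/\alpha} n$, hence $(n-1)/A_m \to x^{-1/\alpha}$; so the first claim reduces to identifying the distributional limit of $Z_{m-1}/A_m$, and the second to a uniform-in-$x$ version of the same identification.

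For the subsequence statement, the key computation is tracking the position parameter $\gamma$ under the rescaling $a_{n_r} \mapsto a_{n_r} x$. Define $p_r$ by $k_{p_r-1} < a_{n_r} \leq k_{p_r}$, so that $a_{n_r} = \gamma(a_{n_r}) k_{p_r}$; if $\gamma(a_{n_r}) \to \lambda \in (c^{-1}, 1)$, then $a_{n_r} x = \lambda x \, k_{p_r} (1 + o(1))$, and for $x$ with $\lambda x \notin \{c^j : j \in \Z\}$ the asymptotic $k_{p_r+j}/k_{p_r} \to c^j$ singles out the index $j = \lceil \log_c(\lambda x)\rceil$ for which $k_{p_r+j-1} < a_{n_r} x < k_{p_r+j}$ eventually, yielding
\[
\gamma(a_{n_r} x) \longrightarrow \frac{\lambda x}{c^{\lceil \log_c(\lambda x)\rceil}} = h_\lambda(x).
\]
The circular case $\lambda = 1$ is handled by splitting the subsequence according to which limit point ($1$ or $c^{-1}$) is approached, and checking algebraically that both yield the same $h_\lambda(x)$. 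Invoking~\eqref{eq-conv-subseq} with position parameter $h_\lambda(x)$, together with continuity of the semistable distribution function $G_{h_\lambda(x)}$ at $x^{-1/\alpha}$, gives $\P(S_{n_r} \geq a_{n_r} x) \to \P(V_{h_\lambda(x)} \leq x^{-1/\alpha})$, which rearranges to~\eqref{eq-def-H}.

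For~\eqref{eq-merge}, I would invoke the Cs\"org\H{o}--Megyesi merging theorem of Section~\ref{sec-semist} in its uniform form
\[
\sup_{t > 0} \bigl| \P(Z_m \leq A_m t) - G_{\gamma(m)}(t) \bigr| \to 0 \qquad \text{as } m \to \infty,
\]
apply it at $m = \lceil a_n x \rceil$ and $t = (n-1)/A_m$, and then replace $t$ by $x^{-1/\alpha}$ and $\gamma(m)$ by $\gamma(a_n x)$. The hard part will be precisely this uniformity in $x$: one must control $|(n-1)/A_{\lceil a_n x\rceil} - x^{-1/\alpha}|$ uniformly on compact subsets of $(0,\infty)$ via Potter-type bounds on $\ell_1$, and invoke equicontinuity of the family $\{G_\mu : \mu \in [c^{-1}, 1]\}$ (jointly in the parameter and the argument) to absorb both the argument shift and the parameter change into a vanishing error. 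The extreme ranges $x \to 0$ and $x \to \infty$ can then be dispatched using uniform tail bounds on $V_\mu$ together with tightness of $S_n/a_n$, so that they contribute negligibly to the supremum.
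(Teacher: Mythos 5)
Your approach is essentially the paper's: both proofs apply the duality rule \eqref{eq-duality} at $m=\lceil a_n x\rceil$, invoke \eqref{eq:bn} and slow variation to get $(n-1)/A_m\to x^{-1/\alpha}$, and then use the Cs\"org\H{o}--Megyesi merging theorem (\cite[Theorem 2]{CM1}) together with continuity of $G_\lambda$, with the uniformity over $x$ patched at the ends because both quantities tend to $1$ as $x\downarrow 0$ and to $0$ as $x\to\infty$. The only substantive difference is that the paper abstracts the $\gamma$-tracking step into Lemma~\ref{lemma-gamma} (whose proof it omits) and derives \eqref{eq-def-H} from \eqref{eq-merge}, whereas you supply the tracking argument inline and establish \eqref{eq-def-H} directly; your sketch of the tracking omits the case $\lambda\in(c^{-1},1)$ with $\lambda x=c^j$, which then lands on circular convergence to $1$, but the mechanism is the one you already describe for $\lambda=1$.
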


In particular, it follows that $H_\lambda$ is a distribution function, which is not 
obvious from its definition. We derive some of its properties in the next sections.

\begin{proof}
Put $\lceil \cdot \rceil$ for the upper integer part.
By the duality (\ref{eq-duality}) and our assumption on $F$
\[
\begin{split}
\P ( S_n \geq a_n x) & =  \P ( Z_{\lceil a_n x \rceil -1} \leq n -1 ) \\
& =  \P \left( \frac{Z_{\lceil a_n x \rceil -1}}{(a_n x)^{1/\alpha} \ell_1(a_n x)}
\leq \frac{n-1}{(a_n x)^{1/\alpha} \ell_1(a_n x)} \right) \\
& \sim \P ( V_{\gamma ( a_n x ) } \leq x^{-1/\alpha}),
\end{split}
\]
where we used \eqref{eq:bn}, the merging theorem (\cite[Theorem 2]{CM1}), and the 
continuity of the distribution function of $V_\lambda$ (in fact they are $C^\infty$).
Note that the asymptotic holds uniformly only for $x$ being in a compact set of 
$(0,\infty)$.
Still the merging 
\eqref{eq-merge} holds uniformly in $x$, since as $x \downarrow 0$ both probabilities go 
to 1, while as $x \to \infty$ both go to 0. Thus we have the merging result 
(\ref{eq-merge}).

To derive the limit theorem \eqref{eq-def-H} we need the following simple lemma, whose 
proof is left to the interested reader.

\begin{lemma} \label{lemma-gamma}
If $\gamma(x_n) \stackrel{cir}{\to} \lambda$, and $x_n \to \infty$ then 
\[
\gamma( x_n y ) \stackrel{cir}{\to} 
\frac{\lambda y}{c^{\lceil \log_c (\lambda y) \rceil }}
=h_\lambda(y)
\]
for any $y > 0$.  
\end{lemma}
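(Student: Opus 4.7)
The idea is to locate $x_n y$ in the partition $\{k_m\}$ using only the asymptotic identity $k_{m+1}/k_m\to c$, from which a telescoping argument gives $k_{m+j}/k_m\to c^j$ as $m\to\infty$ for each fixed integer $j$. Passing to logarithms, the partition has asymptotic spacing $\log c$, so multiplication of $x_n$ by a fixed $y$ corresponds asymptotically to an index shift of $\log_c y$.

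Write $m=m(n)$ for the index with $k_{m-1}<x_n\le k_m$, so $\gamma(x_n)=x_n/k_m$. I would first treat the generic case $\lambda\in(c^{-1},1)$ with $\lambda y\notin\{c^j:j\in\Z\}$. Set $j^\ast:=\lceil\log_c(\lambda y)\rceil$, so that $c^{j^\ast-1}<\lambda y<c^{j^\ast}$. Combining $x_n/k_{m(n)}\to\lambda$ with $k_{m(n)+j}/k_{m(n)}\to c^j$ yields
\[
\frac{x_n y}{k_{m(n)+j^\ast}}\to\frac{\lambda y}{c^{j^\ast}}\in(c^{-1},1),\qquad
\frac{x_n y}{k_{m(n)+j^\ast-1}}\to\frac{\lambda y}{c^{j^\ast-1}}>1,
\]
so eventually $k_{m(n)+j^\ast-1}<x_n y\le k_{m(n)+j^\ast}$ and hence $\gamma(x_n y)\to\lambda y/c^{j^\ast}=h_\lambda(y)$ in the ordinary sense, which is also circular convergence.

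The boundary cases require extra bookkeeping. If $\lambda\in(c^{-1},1)$ and $\lambda y=c^{j^\ast}$, then $h_\lambda(y)=1$, and $x_n y$ may fall on either side of $k_{m(n)+j^\ast}$, so $m(x_n y)\in\{m(n)+j^\ast,\,m(n)+j^\ast+1\}$ and $\gamma(x_n y)$ accumulates in $\{1,c^{-1}\}$, which is precisely $\gamma(x_n y)\stackrel{cir}{\to}1$. If $\lambda=1$ circularly, I would pass to any subsequence along which $\gamma(x_n)$ converges in the ordinary sense to $1$ or to $c^{-1}$; on such a subsequence the generic analysis (with the ordinary limit in place of $\lambda$, noting that for the limit $c^{-1}$ one simply replaces $j^\ast$ by $\lceil\log_c y\rceil-1$) delivers $\gamma(x_n y)\to h_1(y)$, and the exceptional $y=c^{j_0}$ reduces to the previous boundary case.

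The only real obstacle is the case analysis at the borderline values $\lambda y=c^j$ and $\lambda=1$, where the integer index $m(x_n y)$ can hop between two consecutive values along the sequence; the point that needs checking is that such a hop always pairs $\gamma$-values close to $1$ with $\gamma$-values close to $c^{-1}$, which is exactly what circular convergence to $1$ allows. The underlying analytic input is simply the elementary asymptotics $k_{m+j}/k_m\to c^j$ for fixed $j$, inherited directly from \eqref{eq:H}.
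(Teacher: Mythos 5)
Your argument is correct. The paper in fact omits the proof of Lemma~\ref{lemma-gamma} altogether (it is explicitly ``left to the interested reader''), so there is nothing to compare against; the approach you take---using the telescoping consequence $k_{m+j}/k_m\to c^j$ of \eqref{eq:H} to locate $x_n y$ among the $k_m$'s, then splitting into the generic case $\lambda y\notin\{c^j\}$ (ordinary convergence to $h_\lambda(y)\in(c^{-1},1)$), the boundary case $\lambda y=c^{j^\ast}$ (where $m(x_n y)$ may hop between two consecutive indices, yielding limit points exactly in $\{1,c^{-1}\}$, i.e.\ circular convergence to $1$), and the case $\lambda=1$ (handled by passing to subsequences on which $\gamma(x_n)$ converges ordinarily to $1$ or $c^{-1}$, noting both subsequences deliver the same circular limit $h_1(y)$)---is the natural one and is complete. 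One small stylistic remark: in the $\lambda=1$ sub-case you could state more explicitly that the generic Case~1 computation is valid verbatim for the ordinary limit $\lambda=c^{-1}$ even though $c^{-1}$ sits outside the parameter range $(c^{-1},1]$ of circular convergence, since only ordinary convergence of $\gamma(x_n)$ along that subsequence is used; you do use this correctly, but flagging it would preempt a reader's objection.
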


From (\ref{eq-merge}) we can deduce the limit theorem. Assume that $\gamma(a_{n_r}) 
\stackrel{cir}{\to} \lambda \in (c^{-1}, 1]$. Then for any $x > 0$
\[
\lim_{r \to \infty} \P ( S_{n_r}  / a_{n_r} \leq x ) = 
\P \left( (V_{h_\lambda(x)} )^{-\alpha} \leq x \right)
\]
which is the statement.
\end{proof}

\section{Distribution function}
\label{sec-distrib}

We notice that the distribution function $H_{\lambda}$ given by \eqref{eq-def-H} depends 
on $\alpha\in(0,1)$, but for ease of notation we suppress this dependency.   
Lemma~\ref{lemma:tail-est} below shows 
that as $x\to \infty$,
the tail $\overline H_{\lambda}(x)$ behaves similarly to the tail of the Mittag-Leffler 
distribution. For a direct comparison, see~\cite[Theorem 8.1.12]{BGT}.
As a consequence, in Corollary~\ref{cor-mom} we obtain that $H_{\lambda}$ is uniquely 
determined by its moments (which gives another analogy with the Mittag-Leffler 
distribution).

The main result of this section is Theorem~\ref{thm:Hat0},
which gives the behaviour of $H_\lambda$ at $0$.

\subsection{Behaviour at infinity}

To understand the asymptotic behaviour of $\overline H_{\lambda}(x)$ as $x\to\infty$, we 
first consider the asymptotic behaviour
of $G_\lambda(x)= \P( V_\lambda\le x)$, as $x\to 0$.
The required estimate is the following statement, which is Theorem 1 by Bingham 
\cite{Bingham2}; see also Theorem 2.3 by Kern and Wedrich \cite{KW}.

\begin{lemma} \label{lemma:ssdf0}
There exist $0 < c_1 \leq c_2 < \infty$ such that for any $\lambda \in [1,c]$
\[
- c_1 \leq \liminf_{x \to 0+} x^{\frac{\alpha}{1-\alpha}} \log G_\lambda (x)
\leq \limsup_{x \to 0+} x^{\frac{\alpha}{1-\alpha}} \log G_\lambda (x)
\leq - c_2
\]
\end{lemma}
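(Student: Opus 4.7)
The plan is to extract both bounds from the Laplace transform of $V_\lambda$. Since $V_\lambda \ge 0$ with L\'evy measure $-\dd R_\lambda$ supported on $(0,\infty)$, the L\'evy--Khintchine formula together with integration by parts gives, for $s > 0$,
\[
\E e^{-sV_\lambda} = \exp(-\psi_\lambda(s)), \qquad
\psi_\lambda(s) = s^\alpha \int_0^\infty e^{-u} \frac{M(\lambda^{1/\alpha} u/s)}{u^\alpha} \, \dd u,
\]
after substituting $u = sx$ in $\int_0^\infty (1-e^{-sx}) \, \dd(-R_\lambda)(x)$ and using $-R_\lambda(x) = M(\lambda^{1/\alpha} x)/x^\alpha$. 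Since $M$ is continuous, positive, and log-periodic with period $c^{1/\alpha}$, there exist constants $0 < m_1 \leq m_2 < \infty$, independent of $\lambda$, such that $m_1 \leq M \leq m_2$ pointwise. Plugging these into the integral yields
\[
m_1 \Gamma(1-\alpha)\, s^\alpha \;\leq\; \psi_\lambda(s) \;\leq\; m_2 \Gamma(1-\alpha)\, s^\alpha
\]
uniformly in $\lambda \in [1,c]$ and $s > 0$. This is the input that makes uniformity work.

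The limsup estimate (upper bound for $G_\lambda$) then follows from the exponential Chebyshev inequality: for every $s > 0$,
\[
G_\lambda(x) = \P(V_\lambda \leq x) \leq e^{sx} \E e^{-sV_\lambda}
\leq \exp\!\big(sx - m_1 \Gamma(1-\alpha)\, s^\alpha\big).
\]
Optimising in $s$ (with $s \asymp x^{-1/(1-\alpha)}$) produces $\log G_\lambda(x) \leq - c_2\, x^{-\alpha/(1-\alpha)}$ with $c_2$ depending only on $m_1$ and $\alpha$.

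The liminf estimate (lower bound for $G_\lambda$) is the main obstacle, since $\psi_\lambda$ is not asymptotic to a pure power but only bracketed between two. There are two natural routes. The first is to appeal to the two-sided version of Kasahara's exponential Tauberian theorem, which is exactly the content of Bingham's Theorem 1 cited in the statement (and also of Kern--Wedrich, Theorem 2.3); this converts the upper bound $\psi_\lambda(s) \le m_2 \Gamma(1-\alpha) s^\alpha$ into the lower bound $\log G_\lambda(x) \ge -c_1 x^{-\alpha/(1-\alpha)}$. The second, more hands-on route is exponential tilting: define $\dd\P_s = e^{-sV_\lambda}/\E e^{-sV_\lambda}\, \dd\P$, so that under $\P_s$ the variable $V_\lambda$ has mean $\psi_\lambda'(s)$ and variance $-\psi_\lambda''(s)$, both of which are $\asymp s^{\alpha-1}$ and $\asymp s^{\alpha-2}$ by differentiating the integral representation above. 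Choosing $s = s(x)$ so that $\psi_\lambda'(s) \asymp x$, a Chebyshev estimate in the tilted measure shows $\P_s(V_\lambda \leq x) \geq \tfrac{1}{2}$, and untilting gives $G_\lambda(x) \geq \tfrac{1}{2} e^{-sx - \psi_\lambda(s)} \geq \exp(-c_1 x^{-\alpha/(1-\alpha)})$.

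In either route, what requires care is uniformity in $\lambda \in [1,c]$. This reduces to the observation made above: the bounds $m_1, m_2$ on $M$, and hence all resulting constants in the Tauberian inversion or in the tilting argument, depend only on $M$ and $\alpha$, not on the particular $\lambda \in [1,c]$ selected. Therefore one fixed pair $(c_1, c_2)$ works for the whole range of $\lambda$, which is exactly what the statement asserts.
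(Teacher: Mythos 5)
The paper itself does not prove this lemma; it simply cites it as Bingham's Theorem~1 and Kern--Wedrich's Theorem~2.3, which is the de\,Bruijn/Kasahara exponential Tauberian theorem applied to the two-sided bound on the Laplace exponent. Your first route is therefore literally the paper's proof, and on top of it you give a clean elementary derivation of the input: the substitution $u = sx$ in $\psi_\lambda(s)=\int_0^\infty(1-e^{-sx})\,\dd(-R_\lambda)(x)$ is correct, the two-sided bound $m_1\Gamma(1-\alpha)s^\alpha\le\psi_\lambda(s)\le m_2\Gamma(1-\alpha)s^\alpha$ follows from $m_1\le M\le m_2$ (which holds because a log-periodic $M$ with $-R=M(x)/x^\alpha$ nonincreasing must be bounded away from $0$ and $\infty$), and your Chernoff/Chebyshev bound for the limsup is fine and makes that half of the lemma elementary rather than Tauberian.

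The second, ``hands-on'' route is where I'd push back on one step. You assert that $\psi_\lambda'(s)\asymp s^{\alpha-1}$ and $-\psi_\lambda''(s)\asymp s^{\alpha-2}$ ``by differentiating the integral representation above,'' but differentiating $s^\alpha\int_0^\infty e^{-u}M(\lambda^{1/\alpha}u/s)u^{-\alpha}\,\dd u$ in $s$ produces a term involving the derivative of $M$, which is not controlled by $m_1,m_2$ and may not even exist. Integrating by parts instead, $\psi_\lambda'(s)=\int_0^\infty M(\lambda^{1/\alpha}x)x^{-\alpha}e^{-sx}(1-sx)\,\dd x$, and now the kernel $e^{-sx}(1-sx)$ changes sign, so plugging $m_1\le M\le m_2$ into the integrand no longer gives a two-sided bound. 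To rescue this route one needs an extra observation, e.g.~that $\dd R_\lambda(c^{1/\alpha}x)=c^{-1}\dd R_\lambda(x)$ forces $s^{1-\alpha}\psi_\lambda'(s)$ to be log-periodic, and then positivity plus continuity in $s$ give the two-sided power-law bounds. Since you also offer the Tauberian citation (which is exactly what the paper does and needs no such repair), the proposal is acceptable, but the tilting sketch as written is not quite self-contained.

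Finally, a small point on uniformity: your observation that $m_1,m_2$ do not depend on $\lambda$ is the right one and is indeed what makes the constants $c_1,c_2$ uniform over $\lambda\in[1,c]$; this is implicit but unstated in the Bingham and Kern--Wedrich references, so it is good that you flag it.
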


\begin{lemma} \label{lemma:tail-est}
For $x$ large enough,
there exist $\kappa_1 > \kappa_2 > 0$ (independent of $x$)  such that 
\[
\exp \left\{ - \kappa_1 x^{\frac{1}{1-\alpha}}  \right\} \leq
\overline H_\lambda(x) = 1 - H_\lambda(x) \leq 
\exp \left\{ - \kappa_2 x^{\frac{1}{1-\alpha}}  \right\}.
\]
\end{lemma}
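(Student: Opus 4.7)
My plan is to rewrite $\overline{H}_\lambda(x)$ explicitly as the distribution function $G_\mu$ of a semistable variable evaluated near $0$, at a parameter $\mu = h_\lambda(x)$ lying in a compact range, and then apply Lemma~\ref{lemma:ssdf0}.

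First, since $V_\mu$ is nonnegative with continuous distribution function $G_\mu$ for every $\mu\in(c^{-1},1]$, the definition \eqref{eq-def-H} gives
\[
\overline{H}_\lambda(x) \;=\; \P\bigl(V_{h_\lambda(x)}^{-\alpha} > x\bigr)
\;=\; \P\bigl(V_{h_\lambda(x)} < x^{-1/\alpha}\bigr)
\;=\; G_{h_\lambda(x)}\bigl(x^{-1/\alpha}\bigr).
\]
Next, by the log-periodicity $M(c^{1/\alpha} y) = M(y)$ of the L\'evy function in \eqref{eq:def_Rlambda}, we have $R_\mu = R_{c\mu}$, hence $V_\mu \stackrel{d}{=} V_{c\mu}$ and $G_\mu = G_{c\mu}$. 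Thus $G_{h_\lambda(x)} = G_{c\,h_\lambda(x)}$ with $c\,h_\lambda(x) \in (1,c] \subset [1,c]$, bringing the parameter into the range where Lemma~\ref{lemma:ssdf0} applies.

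Setting $u = x^{-1/\alpha}$, so that $u \downarrow 0$ as $x\to\infty$, Lemma~\ref{lemma:ssdf0} then supplies, after upgrading the $\liminf$/$\limsup$ estimates to pointwise bounds uniform in $\mu \in [1,c]$, constants $\kappa_1 \ge \kappa_2 > 0$ and $u_0 > 0$ such that for all $\mu \in [1,c]$ and $u \in (0,u_0)$,
\[
-\kappa_1 \, u^{-\alpha/(1-\alpha)} \;\le\; \log G_\mu(u) \;\le\; -\kappa_2 \, u^{-\alpha/(1-\alpha)}.
\]
Since $u^{-\alpha/(1-\alpha)} = x^{1/(1-\alpha)}$, exponentiating yields the claimed two-sided bounds on $\overline{H}_\lambda(x)$ for $x$ large enough.

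The main obstacle is the uniformity upgrade used in the last step: Lemma~\ref{lemma:ssdf0} provides $\liminf$/$\limsup$ estimates at a \emph{fixed} parameter, whereas we must apply it along the moving parameter $\mu = h_\lambda(x)$, which can oscillate throughout a full fundamental domain of $(c^{-1},1]$ as $x\to\infty$. I expect this step to follow from the continuity of $\mu \mapsto G_\mu$ on the compact interval $[1,c]$, together with the fact that the constants $c_1,c_2$ in Lemma~\ref{lemma:ssdf0} are already chosen independently of $\lambda$; still, it needs to be justified carefully, most naturally by revisiting the proofs in Bingham~\cite{Bingham2} or Kern--Wedrich~\cite{KW} and checking that the thresholds there can be chosen uniformly on $[1,c]$.
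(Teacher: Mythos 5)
Your initial reduction, namely $\overline H_\lambda(x) = G_{h_\lambda(x)}(x^{-1/\alpha})$, is exactly the paper's starting point, and your observation that $G_\mu = G_{c\mu}$ (from the log-periodicity of $M$) is correct. From there the two arguments diverge, and the divergence matters. You want to apply Lemma~\ref{lemma:ssdf0} along the moving parameter $c\,h_\lambda(x) \in (1,c]$, which requires upgrading the $\liminf$/$\limsup$ bounds into pointwise bounds valid for all $u$ below a threshold that is \emph{uniform} in $\mu \in [1,c]$. The lemma as stated gives constants $c_1,c_2$ uniform in $\mu$, but says nothing about the thresholds being uniform, and mere continuity of $\mu \mapsto G_\mu$ does not produce this: you would need some equicontinuity or Dini-type compactness argument, or else a re-examination of the proofs in Bingham or Kern--Wedrich. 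So the gap you flag at the end is genuine, and your proposal as written does not close it.

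The paper avoids the issue entirely by a monotonicity bracketing that needs Lemma~\ref{lemma:ssdf0} only at the single parameter value $\mu = 1$. Since $\overline H_\lambda$ is nonincreasing, and since $h_\lambda(c^k/\lambda) = 1$ for every integer $k$, one can, for $\lambda x \in (c^{k-1}, c^k]$, sandwich
\[
G_1\bigl((cx)^{-1/\alpha}\bigr) \;\le\; \overline H_\lambda\bigl(c^k/\lambda\bigr) \;\le\; \overline H_\lambda(x) \;\le\; \overline H_\lambda\bigl(c^{k-1}/\lambda\bigr) \;\le\; G_1\bigl((x/c)^{-1/\alpha}\bigr),
\]
using $c^{k-1}/\lambda < x \le c^k/\lambda$ together with the monotonicity of $G_1$. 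Now Lemma~\ref{lemma:ssdf0} applies directly to $G_1$ alone; the extra factors $c^{\pm 1}$ are absorbed into $\kappa_1 > \kappa_2$. This is both more elementary and self-contained, and it is the reason the paper can state Lemma~\ref{lemma:ssdf0} without a uniformity clause on the threshold. If you wish to keep your route, you would first have to establish a uniform version of Lemma~\ref{lemma:ssdf0}; but the bracketing argument is the cleaner way through.
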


\begin{proof}
Note that 
\begin{equation} \label{eq:Haux1}
H_{\lambda}(x) =\P \left( V_{\lambda x / c^{ \lceil \log_c (\lambda  x) \rceil}}  
\geq x^{-1/\alpha} \right)=1 - G_{h_\lambda(x)}(x^{-1/\alpha}).
\end{equation}

Clearly, to deal with the presence of $h_\lambda(x)$ in 
$G_{h_\lambda(x)}(x^{-1/\alpha})$,
it is enough to consider the case $\lambda x \in (c^{k-1}, c^{k}]$, for 
$k \geq 0$. As $H_\lambda(x)$ is a distribution function, 
$\overline H_{\lambda}(x)$ is decreasing as a function of $x$,
\begin{equation} \label{eq:tail-est-1}
\begin{split}
\overline H_\lambda(x) & \geq \overline H_\lambda(c^k/ \lambda)
= \P \left( 
V_{1}^{-\alpha} > c^k / \lambda \right) \\
& \geq \P \left( V_{1}^{-\alpha} > c x \right) 
= \P \left( V_{1} \leq (cx)^{-1/\alpha} 
\right) .
\end{split}
\end{equation}
Similarly, we obtain
\begin{equation} \label{eq:tail-est-2}
\overline H_\lambda(x)  \leq 
\P \left( V_{1} \leq (x/c)^{-1/\alpha} \right) .
\end{equation}
Combining \eqref{eq:tail-est-1} and \eqref{eq:tail-est-2} we have
\[
G_1\left( (cx)^{-1/\alpha} \right) \leq 
\overline H_\lambda(x)  \leq 
G_1\left( (x/c)^{-1/\alpha} \right),
\]
which, after substituting back into Lemma \ref{lemma:ssdf0} gives the statement.
\end{proof}

As a consequence of the upper bound for $\overline H_{\lambda}$, we obtain that
the distribution function $H_{\lambda}$ is uniquely determined by its moments.
To see this we verify that Shohat and Tamarkin's criterion \cite[Section 8.0.4]{BGT}
is satisfied.

\begin{cor}
\label{cor-mom} Let $M_k=\int_0^\infty x^k \, \mathrm{d} H_{\lambda}(x)$, $k\geq 0$. Then
$\sum_{k=0}^\infty M_{2k}^{-1/2k}=\infty$.
\end{cor}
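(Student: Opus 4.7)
The plan is to bound the even moments $M_{2k}$ using the tail estimate from Lemma~\ref{lemma:tail-est}, and then use Stirling's formula to show that $M_{2k}^{1/(2k)}$ grows like $k^{1-\alpha}$, which forces the series $\sum_k M_{2k}^{-1/(2k)}$ to diverge.

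More concretely, I would begin by writing
\[
M_{2k} = \int_0^\infty 2k\, x^{2k-1} \overline H_\lambda(x)\, \dd x,
\]
split the integral at some threshold $x_0$ beyond which the upper bound of Lemma~\ref{lemma:tail-est} applies, absorb the contribution from $[0,x_0]$ into a constant times $x_0^{2k}$, and estimate the tail integral by
\[
\int_{x_0}^\infty x^{2k-1} \exp\bigl(-\kappa_2 x^{1/(1-\alpha)}\bigr)\, \dd x.
\]
The substitution $u = \kappa_2 x^{1/(1-\alpha)}$ turns this into a gamma-type integral, giving
\[
M_{2k} \leq C\, (1-\alpha)\, \kappa_2^{-2k(1-\alpha)}\, \Gamma\bigl(2k(1-\alpha)\bigr) + C' x_0^{2k}
\]
for some constants, where the second (easy) term is of lower order as $k\to\infty$.

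Next I would take $(2k)$-th roots. By Stirling's formula $\Gamma(y)^{1/y} \sim y/e$ as $y\to\infty$, so
\[
\Gamma\bigl(2k(1-\alpha)\bigr)^{1/(2k)} = \Bigl[\Gamma\bigl(2k(1-\alpha)\bigr)^{1/(2k(1-\alpha))}\Bigr]^{1-\alpha}
\sim \bigl(2k(1-\alpha)/e\bigr)^{1-\alpha}.
\]
Combining this with the polynomial prefactors (which contribute only $1+o(1)$ after taking the $(2k)$-th root), one obtains an upper bound of the form
\[
M_{2k}^{1/(2k)} \leq C''\, k^{1-\alpha}
\]
for all sufficiently large $k$. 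Consequently
\[
M_{2k}^{-1/(2k)} \geq (C'')^{-1}\, k^{-(1-\alpha)} = (C'')^{-1}\, k^{\alpha-1},
\]
and since $\alpha\in(0,1)$ means $\alpha-1 > -1$, the series $\sum_k k^{\alpha-1}$ diverges, whence $\sum_k M_{2k}^{-1/(2k)} = \infty$. By Shohat--Tamarkin this yields moment determinacy, completing the proof.

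I do not expect a genuine obstacle here; the argument is essentially a standard ``sub-Gaussian-type tail implies moment problem determinacy'' computation. The only delicate point is keeping track of the exponents carefully: the Weibull-type exponent $1/(1-\alpha)$ in the tail produces the reciprocal exponent $1-\alpha$ in the growth of $M_{2k}^{1/(2k)}$, and it is exactly the fact that $1-\alpha < 1$ (equivalently $\alpha > 0$) that makes the series of reciprocals diverge.
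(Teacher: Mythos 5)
Your argument is correct, and it diverges from the paper's proof in a meaningful way. The paper also starts from the upper tail bound of Lemma~\ref{lemma:tail-est} and the identity $M_{2k}=2k\int_0^\infty x^{2k-1}\overline H_\lambda(x)\,\dd x$, but rather than extract the growth rate of $M_{2k}^{1/(2k)}$ directly, it recognises $\overline U(x)=\exp(-\kappa_2 x^{1/(1-\alpha)})$ as the tail of a (normalised) Mittag-Leffler distribution, sets $m_k=\int_0^\infty x^k\,\dd U(x)$, concludes $M_k\leq m_k$, and then cites the known fact that $\sum m_{2k}^{-1/2k}=\infty$ (referencing BGT Section~8.11). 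So the paper's route is a comparison to a known distribution whose Carleman divergence is already on record, while yours is a self-contained computation: substitute $u=\kappa_2 x^{1/(1-\alpha)}$ to reduce the tail integral to $\Gamma(2k(1-\alpha))$, take $(2k)$-th roots via Stirling, and obtain the explicit bound $M_{2k}^{1/(2k)}=O(k^{1-\alpha})$, from which divergence follows since $1-\alpha<1$. The paper's version is shorter on the page; yours is more transparent (you see exactly why the Weibull exponent $1/(1-\alpha)$ is the critical quantity and why $\alpha>0$ is what makes the series diverge) and has the small advantage of not leaning on the slight imprecision of identifying a Weibull-type tail with the Mittag-Leffler tail, which in BGT is only an asymptotic equivalence. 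One minor bookkeeping point worth writing out in full: after splitting the integral at the threshold $x_0$ beyond which the tail bound is valid, you must note that $x_0^{2k}$ is negligible next to $\kappa_2^{-2k(1-\alpha)}\Gamma\bigl(2k(1-\alpha)\bigr)$ because the gamma factor grows super-exponentially in $k$; you flagged this but a clean proof should state it explicitly. The paper, for its part, glosses over the threshold issue entirely.
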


\begin{proof}
Using the upper bound in Lemma~\ref{lemma:tail-est}, compute that
\begin{align*}
\int_0^\infty x^k \, \mathrm{d} H_{\lambda}(x)
& =-\int_0^\infty x^k \, \dd (1-H_{\lambda}(x)) \\
& = k\int_0^\infty x^{k-1}(1-H_\lambda(x))\, \dd x\\
&\leq k\int_0^\infty x^{k-1} \exp(-k_2x^{\frac{1}{1-\alpha}})\, \dd x.
\end{align*}
But $\overline U(x):=\exp(-k_2x^{\frac{1}{1-\alpha}})$ is precisely the tail of a 
Mittag-Leffler distribution $U$, which is different from the standard Mittag-Leffler 
distribution only in terms of $k_2$; see~\cite[Theorem 8.1.12]{BGT}.
Write
\begin{align*}
k\int_0^\infty x^{k-1} \overline U(x)\, \dd x=\int_0^\infty x^k \, \dd U(x):=m_k
\end{align*}
and note that $m_k$ is the $k$-th moment of a Mittag-Leffler distribution. Also, it 
follows that $M_{k}<m_k$.  It is known that $\sum_{k=0}^\infty m_{2k}^{-1/2k}=\infty$ 
(see, for instance,~\cite[Section 8.11]{BGT}). Hence, $\sum_{k=0}^\infty 
M_{2k}^{-1/2k} \geq \sum_{k=0}^\infty m_{2k}^{-1/2k}=\infty$ .
\end{proof}

\begin{rmk}Since $M_k<m_k$ and $H_\lambda$ is uniquely determined by its moments,
we obtain the Laplace transform of $H_{\lambda}$ is bounded from above by the Laplace 
transform of a Mittag-Leffler function.
\end{rmk}

\subsection{Behaviour at zero}
\label{subsec-behav0}

Next we turn to the behaviour of $H_\lambda$ at 0. 
Since $G_\lambda$ is oscillating at infinity for any $\lambda \in (c^{-1}, 1]$, and 
$H_\lambda(x) = \overline G_{h_\lambda(x)}(x^{-1/\alpha})$ it is natural 
to expect an oscillatory behaviour around 0. Surprisingly, it turns out that the 
oscillation of the index and of the argument cancel each other, and result a regular 
behaviour.

\begin{thm} \label{thm:Hat0}
If $M$ is continuous, then for any $\lambda \in (c^{-1}, 1]$
\[
H_\lambda'(0) = \lim_{x \downarrow 0} \frac{H_\lambda(x)}{x} =
M \left(  \lambda^{1/\alpha} \right).
\]
\end{thm}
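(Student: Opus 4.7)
The plan is to exploit the identity from \eqref{eq:Haux1}, $H_\lambda(x) = \overline G_{h_\lambda(x)}(x^{-1/\alpha})$, which reduces the behaviour of $H_\lambda$ at $0$ to the tail behaviour at infinity of the family of nonnegative semistable distributions $G_\mu$ with $\mu \in (c^{-1}, 1]$. The main analytic input is the classical tail asymptotic for such semistable laws, valid because $\alpha \in (0,1)$ and $M$ is continuous: as $y \to \infty$,
\[
\overline G_\mu(y) \sim -R_\mu(y) = \frac{M(\mu^{1/\alpha} y)}{y^\alpha},
\]
where the equivalence can be made uniform in $\mu$ over the compact set $[c^{-1}, 1]$. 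This is the semistable analogue of the well-known fact that positive stable tails agree asymptotically with their L\'evy measure tails.

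Applying this with $\mu = h_\lambda(x) \in (c^{-1}, 1]$ and $y = x^{-1/\alpha}$ gives, as $x \downarrow 0$,
\[
H_\lambda(x) = \overline G_{h_\lambda(x)}(x^{-1/\alpha}) \sim x \cdot M\bigl(h_\lambda(x)^{1/\alpha}\, x^{-1/\alpha}\bigr).
\]
A direct computation from the definition of $h_\lambda$ yields
\[
h_\lambda(x)^{1/\alpha}\, x^{-1/\alpha} \,=\, \frac{\lambda^{1/\alpha}}{(c^{1/\alpha})^{\lceil \log_c (\lambda x) \rceil}},
\]
so the argument of $M$ differs from $\lambda^{1/\alpha}$ by an integer power of the logarithmic period $c^{1/\alpha}$. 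Invoking $M(c^{1/\alpha} z) = M(z)$ then gives the \emph{exact} equality $M\bigl(h_\lambda(x)^{1/\alpha}\, x^{-1/\alpha}\bigr) = M(\lambda^{1/\alpha})$. This is the precise quantitative way in which the oscillation of the index $h_\lambda(x)$ and the oscillation of the argument $x^{-1/\alpha}$ are engineered to cancel, accounting for the regular (non-oscillatory) behaviour advertised in the theorem. Combining the two displays, $H_\lambda(x) \sim x\, M(\lambda^{1/\alpha})$ as $x \downarrow 0$, which is the claim.

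The main obstacle is the uniformity in $\mu$ of the tail asymptotic. As $x \downarrow 0$, the parameter $h_\lambda(x)$ densely visits every value in $(c^{-1}, 1]$, so a pointwise asymptotic for each fixed $\mu$ is insufficient: we need a modulus of approximation that does not depend on $\mu$. The continuity hypothesis on $M$ is what rules out the pathological behaviour of $\overline G_\mu$ near jumps of $M$, at which the naive tail asymptotic would fail and only hold along continuity-determined subsequences (which would spoil the clean cancellation above). I would secure the required uniform statement either by invoking known tail estimates for nonnegative infinitely divisible laws whose L\'evy measure has an almost regularly varying tail, or by a compactness argument based on the continuous dependence of the characteristic function of $V_\mu$ on $\mu$ together with the representation \eqref{eq:semistable-chf-df-lambda}.
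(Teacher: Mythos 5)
Your argument is the paper's argument: the same reduction via the identity $H_\lambda(x) = \overline G_{h_\lambda(x)}(x^{-1/\alpha})$, the same tail asymptotic $\overline G_\mu(y) \sim -R_\mu(y)$ (which the paper obtains from subexponentiality of $G_\mu$ for continuous $M$, citing Shimura--Watanabe and Embrechts et al.), and the same exact cancellation via logarithmic periodicity of $M$. The uniformity you flag as the main obstacle is precisely what the paper isolates as Lemma~\ref{lemma:uniform} (proved in Appendix~\ref{subsec-lemmaunif} by a compound-Poisson decomposition plus a uniform Kesten bound); your first proposed route is essentially that, while the compactness-of-characteristic-functions route is unlikely to control the far tail uniformly and would need substantial additional work.
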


\begin{proof}
Recall the definition of $R_\lambda$ in \eqref{eq:def_Rlambda}.
Theorem 1.3 by Shimura and Watanabe \cite{ShiWat} combined with Theorem 1 by  
Embrechts et al.~\cite{EGV} imply that if $M$ is continuous, then 
$G_\lambda$ is subexponential for any $\lambda \in [c^{-1},1]$. In particular, 
as $x \to \infty$
\[
\overline G_\lambda(x) 
 \sim - R_\lambda(x) 
= \frac{M ( x \lambda^{1/\alpha} )}{x^\alpha}.
\]
By Lemma \ref{lemma:uniform} below this holds uniformly in $\lambda \in [c^{-1},1]$.
Recalling \eqref{eq:Haux1} and using the logarithmic periodicity of $M$, we obtain
\[
\begin{split}
H_\lambda(x) & = \overline G_{h_\lambda(x)}( x^{-1/\alpha}) 
\sim - R_{h_\lambda(x)}(x^{-1/\alpha}) \\
& = x M \left(  x^{-1/\alpha} h_\lambda(x)^{1/\alpha} \right) 
= x M \left(  \lambda^{1/\alpha} \right) \quad \text{as } x \downarrow 0,
\end{split}
\]
as stated.
\end{proof}

Here is the uniformity statement, whose technical proof is given in the 
Appendix~\ref{subsec-lemmaunif}.

\begin{lemma} \label{lemma:uniform}
Whenever $M$ is continuous, the asymptotics
\[
\overline G_\lambda(x) 
\sim \frac{M ( x \lambda^{1/\alpha} )}{x^\alpha} \quad \text{as } x \to \infty
\]
holds uniformly in $\lambda \in [1,c]$.
\end{lemma}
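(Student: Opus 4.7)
The strategy is to promote the pointwise asymptotic $\overline G_\lambda(x) \sim M(\lambda^{1/\alpha} x)/x^\alpha$, which already follows (for each fixed $\lambda$) from subexponentiality of $G_\lambda$ via the Embrechts--Goldie--Veraverbeke theorem as in the proof of Theorem \ref{thm:Hat0}, to be uniform in $\lambda \in [1, c]$. The two structural ingredients I would rely on are: (i) since $M$ is continuous and log-periodic with period $c^{1/\alpha}$, it is bounded ($0 < M_{\min} \le M \le M_{\max} < \infty$) and uniformly continuous in the logarithmic variable, so $\sup_{x > 0} |M(\lambda^{1/\alpha} x) - M(\lambda_*^{1/\alpha} x)| \to 0$ whenever $\lambda \to \lambda_*$; and (ii) the map $\lambda \mapsto V_\lambda$ is continuous in distribution thanks to the continuous dependence of the L\'evy exponent in \eqref{eq:semistable-chf-df-lambda} on $\lambda$.

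The plan is then to argue by compactness and contradiction. Suppose uniformity fails: there exist $\eps > 0$, $\lambda_n \to \lambda_* \in [1, c]$ (after passing to a subsequence), and $x_n \to \infty$ such that $\bigl|x_n^\alpha \overline G_{\lambda_n}(x_n)/M(\lambda_n^{1/\alpha} x_n) - 1\bigr| \geq \eps$. By (i), $M(\lambda_n^{1/\alpha} x_n) - M(\lambda_*^{1/\alpha} x_n) \to 0$ uniformly in $x_n$, so the denominator may be replaced by $M(\lambda_*^{1/\alpha} x_n)$ modulo a vanishing error. The remaining task is to show $x_n^\alpha |\overline G_{\lambda_n}(x_n) - \overline G_{\lambda_*}(x_n)| \to 0$, which combined with the pointwise asymptotic applied at $\lambda_*$ yields the contradiction.

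The main obstacle is precisely this last comparison: weak convergence $V_{\lambda_n} \Rightarrow V_{\lambda_*}$ is not sufficient, since we must compare tails at the scale $x_n^{-\alpha}$, where both probabilities are themselves of order $x_n^{-\alpha}$. To handle it, I would revisit the EGV decomposition: write $V_\lambda$ as an independent sum of a compound Poisson ``large jumps'' part $V_\lambda^{[b]}$ (intensity $\nu_\lambda([b,\infty)) \le M_{\max}/b^\alpha$, summand distribution equal to the normalisation of $\nu_\lambda|_{[b,\infty)}$) and a ``small jumps'' remainder, and then verify that both the leading tail contribution and the subexponential remainder in the standard derivation of EGV depend continuously on the L\'evy measure $\nu_\lambda$ in variation norm on $[b, \infty)$. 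Since $\nu_{\lambda_n} \to \nu_{\lambda_*}$ in this strong mode on the compact set $[1, c]$, the desired uniform bound will follow. As a possible simplification, the exact semistability identity $V_\lambda^{(c)} \stackrel{d}{=} c^{1/\alpha} V_\lambda$ (a direct consequence of the log-periodicity of $M$ together with the L\'evy--Khintchine representation) reduces uniformity in $x \to \infty$ to uniformity on a single period $x \in [X, c^{1/\alpha} X]$ as $X \to \infty$, where the finite-scale comparison becomes more tractable via the smoothness of the densities $g_\lambda$.
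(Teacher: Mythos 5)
Your high-level framing — split off a compound Poisson large-jumps part à la Embrechts--Goldie--Veraverbeke, reduce to showing its tail is uniformly controlled by the L\'evy measure on $[b,\infty)$ — is in the right direction; the paper does exactly this in Appendix~\ref{subsec-lemmaunif}, introducing $\nu_\lambda(x) = 1 - R_\lambda(x)/R_\lambda(1)$ and writing $G_\lambda = G_{\lambda,1} * G_{\lambda,2}$ with $G_{\lambda,1}$ compound Poisson with jump distribution $\nu_\lambda$. But the step you flag as ``the main obstacle'' is precisely the content of the lemma, and your proposal does not actually overcome it.

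Two specific problems. First, your contradiction argument reduces matters to showing $x_n^\alpha\bigl|\overline G_{\lambda_n}(x_n) - \overline G_{\lambda_*}(x_n)\bigr| \to 0$ and then invoking the pointwise asymptotic at $\lambda_*$; but establishing this comparison between two subexponential tails at the precise scale $x_n^{-\alpha}$ \emph{is} the uniformity statement in disguise, so the compactness argument circles back to where it started unless the comparison is carried out in full. Second, the ingredient you lean on to carry it out — that $\nu_{\lambda_n} \to \nu_{\lambda_*}$ in total variation on $[b,\infty)$, and that the EGV tail asymptotic is continuous with respect to variation-norm perturbations of the L\'evy measure — is not justified and is in fact doubtful under the hypotheses. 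The distribution functions $R_\lambda(b) - R_\lambda(x)$ converge uniformly as $\lambda_n \to \lambda_*$ (because $M$ is continuous and log-periodic), which gives weak convergence; but $M$ is only assumed continuous, so the measures $\dd R_\lambda$ may have a mutually singular oscillatory part, and weak convergence does not upgrade to total variation. So you would need a genuinely different argument there.

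The paper sidesteps the compactness/contradiction framing entirely. Following \cite{EGV}, it reduces to showing that the Kesten bound
\[
\overline{\nu_\lambda^{*n}}(x) \le K(1+\eps)^n\,\overline\nu_\lambda(x)
\]
and the relation $\overline{\nu_\lambda^{*n}}(x)/\overline\nu_\lambda(x) \to n$ hold \emph{uniformly in $\lambda$}, and then, via the uniformity statements in Foss--Korshunov--Zachary (Theorems 3.35 and 3.39), reduces both to the single uniform estimate
\[
\lim_{x\to\infty}\ \sup_{\lambda\in[c^{-1},1]} \Bigl|\frac{\overline{\nu_\lambda*\nu_\lambda}(x)}{\overline\nu_\lambda(x)} - 2\Bigr| = 0 ,
\]
which is then proved by a direct computation splitting the convolution integral at a fixed large $K$ and exploiting log-periodicity and uniform continuity of $M$ on compacts (no continuity of the densities, no variation-norm convergence required). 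The buy is that everything becomes one concrete integral estimate rather than a stability argument. If you want to salvage the compactness route, you would essentially have to redo that integral estimate anyway, so the direct uniform argument is the more economical path; you should also note that your ``semistability reduction to a single period'' observation is sound but, again, only defers rather than removes the need for the explicit estimate on the period.
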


\subsection{Example}

For $\alpha \in (0,1)$, let $X, X_1, X_2, \ldots$ be iid random variables with 
distribution $\P ( X = 2^{n/\alpha} ) = 2^{-n}$, $n=1,2,\ldots$. This is the generalised 
St.~Petersburg distribution with parameter $\alpha$; see Cs\"org\H{o} \cite{Csorgo}. 
Short calculation gives that
\[
\overline F(x) =
\P ( X > x ) = \frac{2^{\{ \alpha \log_2 x \}}}{x^\alpha}, \quad x \geq 2^{1/\alpha},
\]
where $\{ \cdot \}$ stands for the fractional part.
Thus, it satisfies \eqref{eq-dgp-distf} with $c = 2$, $k_n = 2^n$, $\ell \equiv 1$, $h 
\equiv 0$, and $M(x) = 2^{\{ \alpha \log_2 x \} }$. 
In this case the positional parameter $\gamma_n$ in \eqref{eq:def-gamma} simplifies as
$\gamma_n = n/ 2^{\lceil \log_2 n \rceil}$, where $\lceil \cdot \rceil$ stands for the 
upper integer part. Thus
\[
\frac{\sum_{i=1}^{n_r} X_i}{n_r^{1/\alpha}} \rightarrow^d W_\lambda, \quad r \to \infty, 
\]
if (and only if) $\gamma_{n_r} \stackrel{cir}{\to} \lambda \in (1/2, 1]$. The L\'evy 
function of the limit is given by
\[
R_\lambda(x) = - \frac{2^{\{ \alpha \log_2 (\lambda^{1/\alpha} x) \}}}{x^\alpha},
\quad  x > 0.
\]

On Figure \ref{fig:1} we see the distribution function of $W_\lambda$ for different 
values of $\lambda$. The oscillatory behaviour of the tail is clearly visible.
Figure \ref{fig:2} shows the corresponding $H_\lambda$ distribution functions.
The distribution functions are calculated by simulation.

\begin{figure}
\begin{center}
\includegraphics[height=8.3cm]{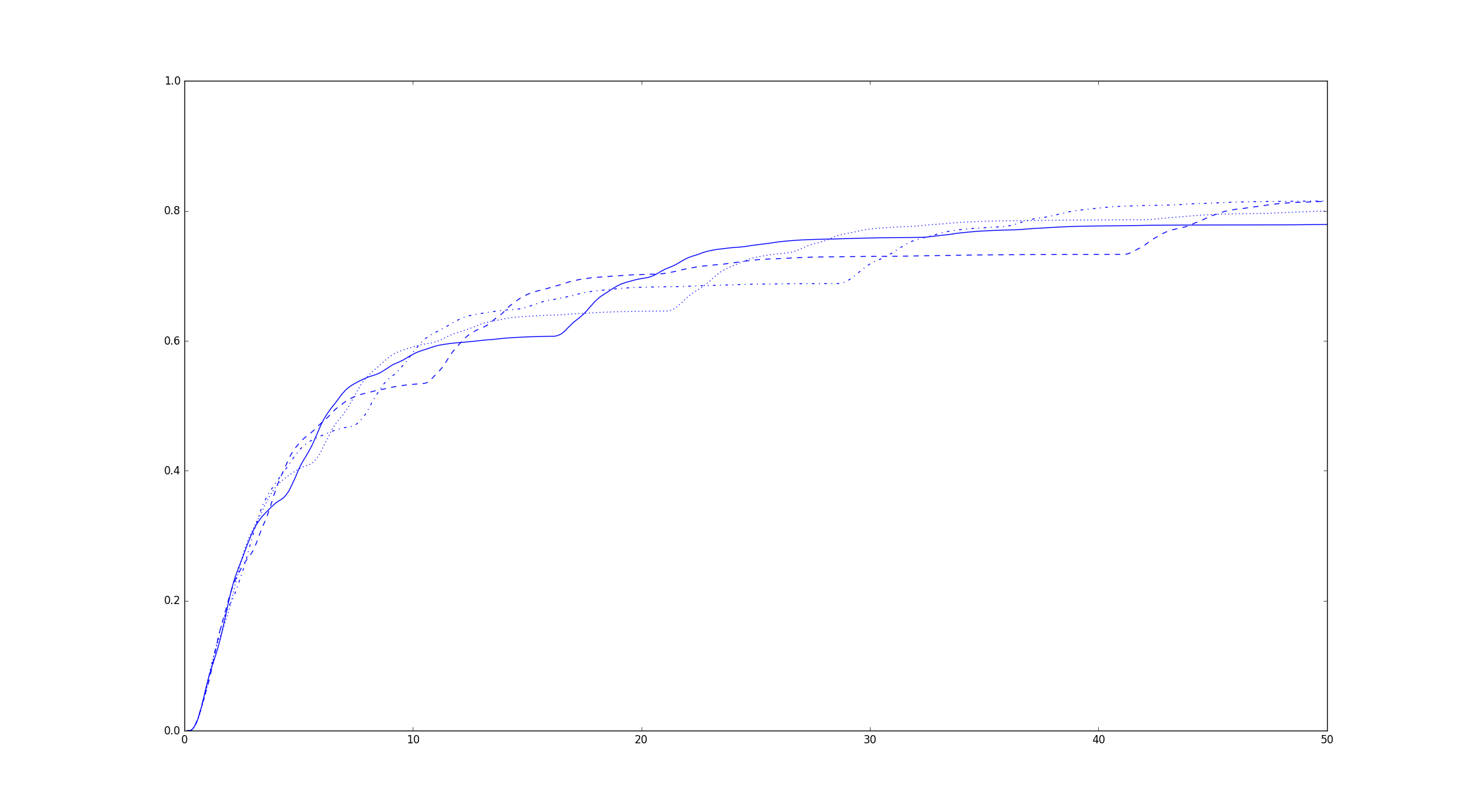}
\vspace*{-20pt}
\caption{The $G_\lambda$ functions in the St.~Petersburg case $\alpha = 0.5$.}
\label{fig:1}
\end{center}
\end{figure}

\begin{figure}
\begin{center}
\includegraphics[height=8.3cm]{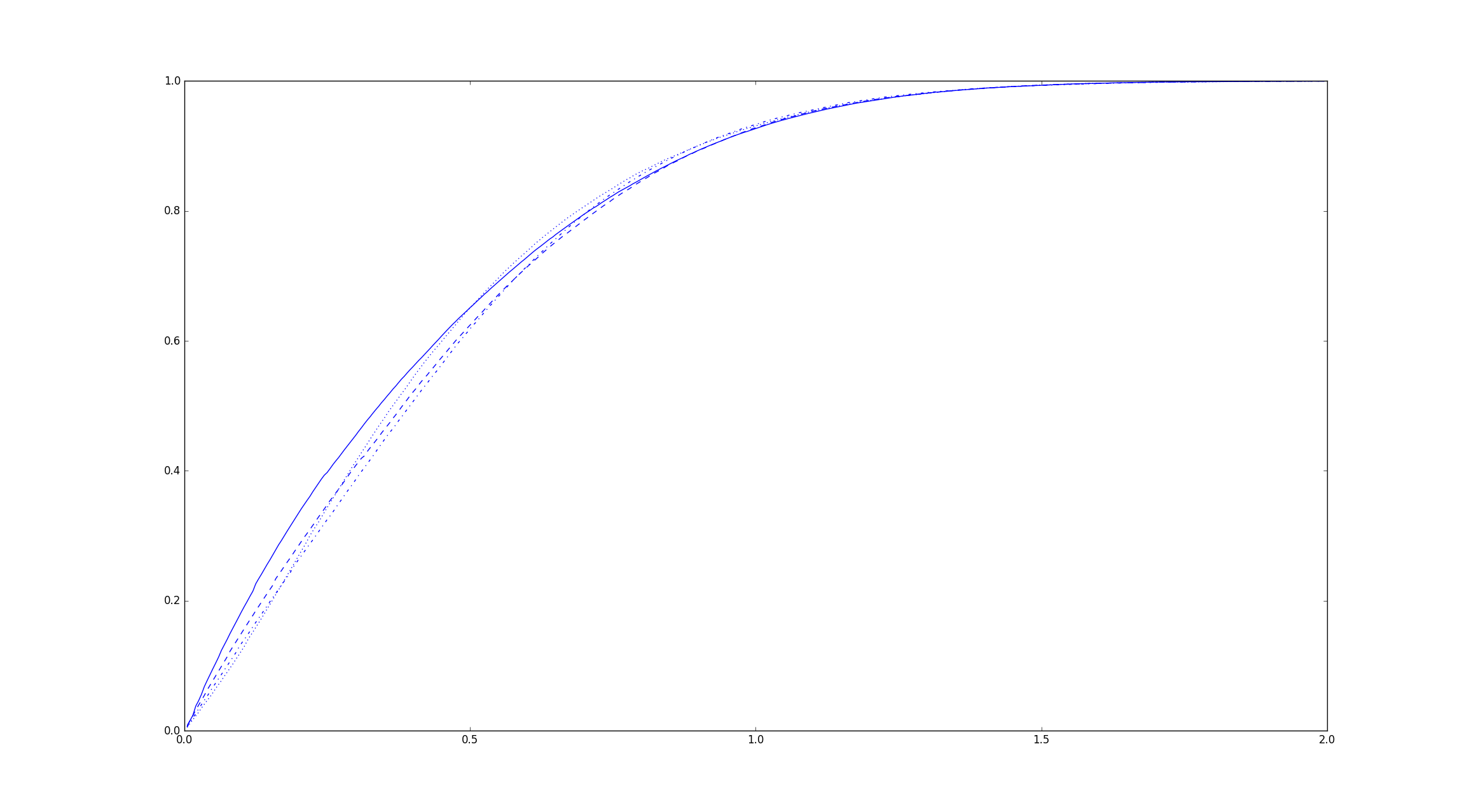}
\vspace*{-20pt}
\caption{The $H_\lambda$ functions in the St.~Petersburg case $\alpha = 0.5$.}
\label{fig:2}
\end{center}
\end{figure}

\section{On the renewal measure}
\label{sec-renmeas}

The aim of this section is to provide asymptotics for the renewal measure of the return 
times when the underlying distribution belongs to the domain of geometric partial 
attraction of a semistable law. We extend the result for positive operators in the spirit 
of Melbourne and Terhesiu \cite{MT12}, which is a crucial step in Section 
\ref{sec-dets} to obtain limit theorems for a dynamical system, which is not isomorphic 
to a Markov renewal chain.

\subsection{Scalar case}

First, we need several definitions and results about regularly 
log-periodic functions; see \cite{PK2}.
Introduce the set of logarithmically periodic functions with period $r > 1$
\[
\begin{split}
\mathcal{P}_{r} = \Big\{  p: (0,\infty) \to (0,\infty)  : & \,
\inf_{x \in [1,r]} p(x) > 0, \ 
p \text{ is bounded, right-continuous, } \\ 
& \text{ and }  p(x r) = p(x), \ 
\forall x >0\Big\}.
\end{split}
\]
Since we need monotonicity, for $r > 1$ we further introduce the sets of functions 
\begin{equation} \label{eq:def-P}
\begin{split}
&\mathcal{P}_{r,\rho} = \Big\{  p: (0,\infty) \to (0,\infty) \, : \,
p \in \mathcal{P}_{r}, \text{ and } x^{\rho} p(x) \text{ is nondecreasing} \Big\}, 
\ \rho \geq 0, \\
&\mathcal{P}_{r,\rho} = \Big\{  p: (0,\infty) \to (0,\infty) \, : \,
p \in \mathcal{P}_{r}, \text{ and } x^{\rho} p(x) \text{ is nonincreasing} \Big\}, 
\ \rho < 0.
\end{split}
\end{equation}
We also need results on the Laplace--Stieltjes transform of regularly log-periodic 
functions. Therefore, for $r > 1$, $\rho \geq 0$, put
\begin{equation*} \label{eq:def-Q}
\begin{split}
\mathcal{Q}_{r,\rho} = \Big\{  q: & (0, \infty) \to (0,\infty) \, :
\, q \in \mathcal{P}_r, \text{ and } s^{-\rho} q(s) \text{ is completely monotone}   
\Big\}.
\end{split}
\end{equation*}
Define the operator $\mathrm{A}_\rho : \mathcal{P}_{r,\rho} \to \mathcal{Q}_{r,\rho}$, 
$\rho > 0$, as
\begin{equation} \label{eq:defA}
\mathrm{A}_\rho p (s) = s^{\rho} \int_0^\infty e^{-s x} \dd ( p(x) x^\rho) .
\end{equation}
In Lemma 1 in \cite{PK2} it is shown that $\mathrm{A}_\rho$ is one-to-one.

Let $\mathcal{P}_{r,\rho}^{1}$ denote the set of differentiable functions in 
$\mathcal{P}_{r,\rho}$. For $r > 1$ and $\rho > 0$ introduce the operator 
$\mathrm{B}_{r, \rho} = \mathrm{B}_{\rho}: \mathcal{P}_{r} \to 
\mathcal{P}_{r,\rho}^{1}$
\begin{equation} \label{eq:defB}
\mathrm{B}_\rho p (x) = x^{-\rho} \int_0^x y^{\rho -1 } p(y) \dd y. 
\end{equation}
Then $\mathrm{B}_\rho$  is  one-to-one with inverse
\begin{equation*} \label{eq:Binv}
{\mathrm{B}}_\rho^{-1} q (x) =  x^{1-\rho} \frac{\dd}{\dd x} [x^\rho q(x)], \quad q \in 
\mathcal{P}_{r,\rho}^{1}.
\end{equation*}

\smallskip

In this section we assume that the subsequence $k_n$ in (\ref{eq:H}) is 
$k_n = \lfloor c^n \rfloor$ and \eqref{eq-dgp-distf} holds with
$\ell \equiv 1$. The latter is equivalent to $\ell_1 \sim 1$ by \eqref{eq:ellell2}.
It is easy to see that in this case $\delta(x)$ can indeed be replaced by
$x$ in \eqref{eq-dgp-distf}. Therefore
\begin{equation} \label{eq:renewal-ass}
\overline F(x) = x^{-\alpha} \left( M(x) + h(x) \right), \quad 
k_n = \lfloor c^n \rfloor,
\end{equation}
where $M(x c^{1/\alpha} ) = M(x)$ for all $x > 0$, i.e.~$M \in 
\mathcal{P}_{c^{1/\alpha}}$, and $\lim_{n \to \infty} h(x c^{n/\alpha}) = 0$ for all $x 
\in C_M$, with $C_M$ being the continuity points of $M$.

The renewal function corresponding to $F$ is defined as
\begin{equation*}
U(x) = \sum_{n=0}^\infty F^{*n}(x),
\end{equation*}
where $F^{*n}$ stands for the $n$th convolution power.

\begin{prop} \label{prop:renewal}
Assume \eqref{eq:renewal-ass}. Then
\begin{equation} \label{eq:U-asy}
\lim_{n \to \infty} 
\frac{U(c^{n/\alpha}z)}{c^n z^\alpha} = p(z), \quad z \in C_p,
\end{equation}
with $p = \mathrm{A}^{-1}_\alpha (
{1}/{\mathrm{A}_{1-\alpha} \mathrm{B}_{1-\alpha} M})$. 
\end{prop}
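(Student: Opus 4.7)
The strategy is to pass to Laplace--Stieltjes transforms and reduce the claim to the log-periodic Karamata-type Tauberian machinery developed in \cite{PK2}. Writing $\widehat U(s) = \int_0^\infty e^{-sx}\dd U(x)$ and $\widehat F(s) = \int_0^\infty e^{-sx}\dd F(x)$, the renewal identity $U = \sum_{n\ge 0} F^{*n}$ gives $\widehat U(s) = 1/(1-\widehat F(s))$, and integration by parts yields $1-\widehat F(s) = s\int_0^\infty e^{-sx}\overline F(x)\,\dd x$. The goal is therefore to compute the small-$s$ asymptotics of $1-\widehat F$, take reciprocals, and invert via the operator $\mathrm{A}_\alpha$.

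Substituting the tail assumption \eqref{eq:renewal-ass} and splitting,
\[
1-\widehat F(s) = s\int_0^\infty e^{-sx} x^{-\alpha} M(x)\,\dd x + s\int_0^\infty e^{-sx} x^{-\alpha} h(x)\,\dd x.
\]
A direct unwinding of the definitions \eqref{eq:defA}, \eqref{eq:defB} gives
\[
x^{1-\alpha} \mathrm{B}_{1-\alpha} M(x) = \int_0^x y^{-\alpha} M(y)\,\dd y, \qquad
\mathrm{A}_{1-\alpha}\mathrm{B}_{1-\alpha} M(s) = s^{1-\alpha}\int_0^\infty e^{-sx} x^{-\alpha} M(x)\,\dd x,
\]
so the leading term equals $s^\alpha \mathrm{A}_{1-\alpha}\mathrm{B}_{1-\alpha} M(s)$. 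Since $M\in\mathcal{P}_{c^{1/\alpha}}$, the function $q_0 := \mathrm{A}_{1-\alpha}\mathrm{B}_{1-\alpha}M$ lies in $\mathcal{Q}_{c^{1/\alpha},1-\alpha}$ and is in particular bounded above and bounded below away from zero (this is the point of the condition $\inf_{[1,r]} p > 0$ in the definition of $\mathcal{P}_r$).

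Next I would show that the $h$-contribution is negligible: using the boundedness of $h$, the assumption $h(xc^{n/\alpha})\to 0$ at continuity points of $M$, a substitution $x=y/s$ and dominated convergence along $s_n = c^{-n/\alpha}$, one obtains $s\int_0^\infty e^{-sx} x^{-\alpha} h(x)\,\dd x = o(s^\alpha)$ as $n\to\infty$. Combined with the uniform lower bound on $q_0$, this yields
\[
\widehat U(s) \;\sim\; \frac{s^{-\alpha}}{\mathrm{A}_{1-\alpha}\mathrm{B}_{1-\alpha} M(s)} \;=\; s^{-\alpha}\, q(s), \qquad s\to 0,
\]
with $q := 1/q_0 \in \mathcal{Q}_{c^{1/\alpha},\alpha}$ log-periodic of period $c^{1/\alpha}$. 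By the one-to-one property of $\mathrm{A}_\alpha$ on $\mathcal{P}_{c^{1/\alpha},\alpha}$ (Lemma 1 of \cite{PK2}) there is a unique $p = \mathrm{A}_\alpha^{-1} q$, and the corresponding Tauberian theorem of \cite{PK2} converts $\widehat U(s)\sim s^{-\alpha} q(s)$ into $U(c^{n/\alpha} z)/(c^n z^\alpha) \to p(z)$ at continuity points of $p$, which is precisely \eqref{eq:U-asy}.

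The main obstacle is the error-term control in the previous paragraph: because $M$ is genuinely log-periodic (not slowly varying), one cannot simply apply Karamata's theorem to absorb $h$ into a slowly-varying remainder. One must instead show $\int e^{-sx} x^{-\alpha} h(x)\,\dd x$ is dominated, along the geometric subsequence $s_n = c^{-n/\alpha}$, by a quantity that is log-periodically comparable to $s^\alpha q_0(s)$, and then exploit the log-periodicity of $q_0$ to pass from a subsequential to a full asymptotic. The cleanest route is a change of variable $u = sx$, which converts the integrand to $s^{\alpha-1} e^{-u} u^{-\alpha} h(u/s)$, so that the mode of convergence of $h$ along $s_n$ combined with Fatou/dominated convergence delivers the required $o$-bound; the extension to arbitrary $s\to 0$ then follows because the leading term $s^\alpha q_0(s)$ is invariant under $s\mapsto s c^{1/\alpha}$.
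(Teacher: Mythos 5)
Your proposal follows the same route as the paper's proof: pass to Laplace--Stieltjes transforms, use the renewal identity to reduce to the small-$s$ asymptotics of $1-\widehat F(s)$, obtain $\widehat U(s)\sim 1/(s^\alpha q_0(s))$ with $q_0=\mathrm{A}_{1-\alpha}\mathrm{B}_{1-\alpha}M$, and invoke the log-periodic Tauberian theorem (Theorem~1) of \cite{PK2}. The only difference is that you partially rederive the Abelian step (unwinding $\mathrm{A}_{1-\alpha}\mathrm{B}_{1-\alpha}$ and sketching the error-term control for $h$) where the paper simply cites Corollary~1 of \cite{PK2}, which already packages that estimate.
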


If $p$ is continuous, then 
(\ref{eq:U-asy})  implies
\[
U(x) \sim x^\alpha  p(x) \quad \text{as } x \to \infty.
\] 

\begin{proof}
From \eqref{eq:renewal-ass} 
\begin{equation*} \label{eq:reglog}
\lim_{n \to \infty} {c^n z^\alpha} \overline F(c^{n/\alpha} z) = M(z),
\quad z \in C_{M}.
\end{equation*}
Corollary 1 in \cite{PK2} implies that
\begin{equation} \label{eq:Laplace-asy}
1 - \widehat F(s) = \int_0^\infty ( 1 - e^{-sy} ) \dd F(y)
\sim s^\alpha q_0(s),
\end{equation}
where $q_0 = \mathrm{A}_{1-\alpha} \mathrm{B}_{1- \alpha} M$.

Thus, using (\ref{eq:Laplace-asy}) for the Laplace transform of $U$ we obtain as 
$s \downarrow 0$
\[
\begin{split}
\widehat U(s) & = \int_0^\infty e^{-sy} \dd U(y) 
= \sum_{n=0}^\infty \left( \widehat F(s) \right)^n \\
& = \frac{1}{1- \widehat F(s)} 
\sim \frac{1}{s^\alpha  q_0(s)}.
\end{split}
\]
By Theorem 1 in \cite{PK2} the latter is equivalent to \eqref{eq:U-asy} with
$\mathrm{A}_\alpha p = 1/q_0$, and the statement follows.
\end{proof}

\subsection{Operator case}
\label{sec-genKaram}

We recall that in the set-up of Subsections~\ref{subsec-renrec} and~\ref{ssec-RT}, we 
have $u_n=L^n 1_Y$, \ae on $Y$, where $L$ is the transfer operator associated with 
$(X,\mathcal{B}_X, T, \mu)$. We assume that \eqref{eq:renewal-ass} holds for the 
distribution function of $\tau$,
 which  by Proposition \ref{prop:renewal}, implies 
\eqref{eq:U-asy}.  As a consequence,
\begin{equation}
\label{eq-avMS}
\lim_{n \to \infty} \frac{\sum_{j=0}^{[ c^{n/\alpha}z]} u_j}{c^n z^\alpha } =
\lim_{n \to \infty} \frac{\sum_{j=0}^{[ c^{n/\alpha}z]} L^n 1_Y}{c^n z^\alpha }
=p(z), \quad z \in C_p.
\end{equation}

In what follows we are interested in a more general form of~\eqref{eq-avMS} that
applies to dynamical system that do not come equipped with an iid sequence 
$\{\tau\circ T_Y^n\}_{n\ge 1}$ and for which \eqref{eq-tranren} does not hold.
We consider such dynamical systems in Section \ref{sec-dets}, where we justify that 
Theorem \ref{prop-genav} below (a generalisation of \eqref{eq-avMS}) applies to them.

Before stating the result of this section, we recall the following notation: we write 
$T(x)\sim c(x)P$  for bounded operators $T(x), P$ acting on some Banach space $\B$ with 
norm $\|\, \|$ if $\|T(x)-c(x) P\| = o(c(x))$.

\begin{thm} \label{prop-genav}
Set $\hat T(e^{-s})=\sum_{n=0}^\infty T_n e^{-sn}$, $s>0$, 
where $T_n$ are uniformly bounded positive operators on some Banach space $\B$ with norm 
$\|\, \|$. Let  $P: \B \to \B$ be a bounded linear operator.  Assume that 
\begin{equation}
\label{eq-hatT}
\hat T(e^{-s})\sim \frac{1}{s^\alpha\ell(1/s) q_0(s)} P \text{ as } s\to 0,
\end{equation}
for some slowly varying function $\ell$, $\alpha\in (0,1)$, and
$q_0 \in \mathcal{Q}_{c^{1/\alpha}, \alpha}$.
Let $p= \mathrm{A}_\alpha^{-1} (1/ q_0)$. 
Then for all  $z \in C_p$, as $n\to\infty$,
$$
\sum_{j=0}^{ \lfloor c^{n/\alpha}z \rfloor} T_j \sim
\frac{c^n z^\alpha}{\ell(c^{n/\alpha} z)} \, p(z)\, P.
$$
\end{thm}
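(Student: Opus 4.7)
The plan is to reduce this operator-valued Tauberian statement to the scalar semistable Tauberian theorem of Kevei~\cite{PK2} that already underlies Proposition~\ref{prop:renewal}. Set $f(s) = 1/(s^\alpha \ell(1/s) q_0(s))$, so that by~\eqref{eq-hatT}
$$
\hat T(e^{-s}) = f(s)\,P + E(s), \qquad \|E(s)\| = o(f(s)) \text{ as } s \downarrow 0.
$$
Since $q_0 \in \mathcal{Q}_{c^{1/\alpha},\alpha}$ and $\ell$ is slowly varying, $f$ is precisely of the form to which Theorem~1 of~\cite{PK2} applies, and the associated inversion converts the Laplace asymptotic $\sim 1/(s^\alpha \ell(1/s) q_0(s))$ into the Ces\`aro-type rate $(c^n z^\alpha/\ell(c^{n/\alpha}z))\,p(z)$ with $p = \mathrm{A}_\alpha^{-1}(1/q_0)$, exactly as in the proof of Proposition~\ref{prop:renewal}.

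The next step is to reduce the operator statement to the scalar one. Positivity of the $T_n$ ensures that the partial sums $S_N := \sum_{j=0}^N T_j$ are monotone increasing in the operator order, which serves as the operator analogue of the Hardy--Littlewood monotonicity Tauberian condition. For any positive $v \in \B$ and any positive continuous linear functional $\phi$ on $\B$, the sequence $a_n := \phi(T_n v) \geq 0$ is nonnegative with Laplace transform
$$
\sum_{n\ge 0} a_n e^{-sn} = \phi\bigl(\hat T(e^{-s}) v\bigr) = f(s)\,\phi(Pv) + O\bigl(\|\phi\|\,\|v\|\,\|E(s)\|\bigr),
$$
and the scalar Tauberian theorem (Theorem~1 of~\cite{PK2}) then yields
$$
\sum_{j=0}^{\lfloor c^{n/\alpha}z\rfloor} \phi(T_j v) \;\sim\; \frac{c^n z^\alpha}{\ell(c^{n/\alpha}z)}\,p(z)\,\phi(Pv), \qquad z \in C_p.
$$

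The principal remaining task, which I expect to be the main obstacle, is upgrading this pointwise-in-$(\phi,v)$ convergence to the operator-norm asymptotic claimed in the theorem. The idea is to exploit the fact that the hypothesis already provides the error in operator norm, and not merely when tested against individual $(\phi,v)$. Running the inversion procedure of~\cite{PK2} (which after unwinding $\mathrm{A}_\alpha^{-1}$ reduces to a Karamata-type step) on the operator-valued Laplace transform $\hat T(e^{-s})$ itself, rather than on scalar traces of it, lets the operator-norm error $\|E(s)\|/f(s) = o(1)$ propagate directly through each step and deliver
$$
\Bigl\|\,S_{\lfloor c^{n/\alpha}z\rfloor} - \frac{c^n z^\alpha}{\ell(c^{n/\alpha}z)}\,p(z)\,P\,\Bigr\| = o\!\left(\frac{c^n z^\alpha}{\ell(c^{n/\alpha}z)}\right), \qquad z\in C_p.
$$
Here the positivity and uniform boundedness of the $T_n$ provide the Tauberian side condition (monotonicity of $S_N$ and control on $\|S_N\|$) that legitimises transferring the scalar inversion verbatim to the operator-valued setting, in the spirit of the operator Karamata argument of Melbourne--Terhesiu~\cite{MT12}.
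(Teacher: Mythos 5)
Your high-level diagnosis is right: positivity of the $T_j$ is the Tauberian side condition, and the proof must be a Karamata ``approximation by polynomials'' argument carried out directly at the operator level, in the spirit of Melbourne--Terhesiu. But the proposal contains a detour that goes nowhere and, more importantly, stops exactly where the actual work begins.

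The reduction to scalar sequences $\phi(T_n v)$ is a dead end, as you yourself half-admit: weak convergence against all pairs $(\phi,v)$ does not give operator-norm convergence, and in the spaces of interest (piecewise H\"older observables) there is in any case no obvious supply of positive linear functionals making the positive cone generating. So that step can be discarded. The real proof bypasses it entirely: since $\hat T(e^{-s})\sim\frac{1}{\ell(1/s)}\int_0^\infty e^{-sx}\,\dd(p(x)x^\alpha)\,P$ and $\sum_{j\ge0}T_jQ(e^{-sj})=\sum_{k=1}^m b_k\,\hat T(e^{-sk})$ for any polynomial $Q(x)=\sum_{k=1}^m b_kx^k$, one gets immediately, with operator-norm error, that $\sum_{j\ge0}T_jQ(e^{-sj})\sim\frac{1}{s^\alpha\ell(1/s)}\int_0^\infty Q(e^{-x})\,\dd(p(x/s)x^\alpha)\,P$. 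This is already an operator statement; no lifting from scalars is needed.

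The genuine gap is that you never actually execute the Karamata step, and the new difficulty here compared with~\cite{MT12,MT13} is precisely what your sketch does not touch. In the regularly varying case the density $p$ is continuous, but here $p\in\mathcal{P}_{c^{1/\alpha},\alpha}$ is only log-periodic with bounded variation on periods, possibly with jumps; this is why the conclusion is restricted to $z\in C_p$. Writing $\sum_{j=0}^{\lfloor c^{n/\alpha}z\rfloor}T_j=\sum_{j\ge0}T_j\,g\bigl(e^{-j/\lfloor c^{n/\alpha}z\rfloor}\bigr)$ with $g=1_{[e^{-1},1]}$ and sandwiching $g$ between polynomials $Q_1\le g\le Q_2$ (legitimate by positivity of the $T_j$), one is forced to control the quantity $\int_0^\infty\bigl[Q_2(e^{-x})-g(e^{-x})\bigr]\,\mu_n(\dd x)$ for the family of measures $\mu_n(\dd x)=\dd\bigl(p(x\lfloor c^{n/\alpha}z\rfloor)x^\alpha\bigr)$. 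The paper constructs the polynomials so that this error splits into $\varepsilon\int_0^\infty e^{-x}\mu_n(\dd x)$ plus $\mu_n\bigl((1-\delta,1+\delta)\bigr)$, and the second term is where continuity of $p$ at $z$, logarithmic periodicity, and monotonicity of $x^\alpha p(x)$ are all needed to show $\mu_n\bigl((1-\delta,1+\delta)\bigr)<\varepsilon$ for $n$ large. None of this appears in your argument, and without it the theorem does not follow: if $z\notin C_p$ the sandwich does not close, so the continuity hypothesis cannot be an afterthought. Saying the inversion can be ``run on operators directly'' in the spirit of~\cite{MT12} names the destination but not the route; the route, and in particular the treatment of the mass of $\mu_n$ near the jump of $g$, is the substance of the proof.
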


\begin{proof} Given assumption~\eqref{eq-hatT}, we proceed as in the proofs 
of~\cite[Proposition 3.3 and Lemma 3.5]{MT13}, which adapt the proof of Karamata's 
theorem via `approximation by polynomials' (see, for instance, Korevaar \cite[Section 
1.11]{Korevaar}) to the case of positive operators. 

{\bf{Step 1}} Given a polynomial $Q(x)=\sum_{k=1}^m b_k x^k$, we argue that
\begin{equation}
\label{eq-firstQ}
\sum_{j=0}^\infty  T_j Q(e^{-sj})\sim\frac{1}{s^{\alpha}\ell(1/s)}\int_0^\infty 
Q(e^{-x})\, \dd (p(x/s) x^\alpha) P.
\end{equation}
Note that
$$
\hat 
T(e^{-s})\sim\frac{1}{s^{\alpha}\ell(1/s)q_0(s)}P=\frac{1}{s^{\alpha}\ell(1/s)}\mathrm
{A}_\alpha p(s)P=\frac{1}{\ell(1/s)}\int_0^\infty e^{-sx}\, \dd (p(x) x^\alpha)P
$$
and that $\sum_{j=0}^\infty  T_j Q(e^{-sj})=\sum_{k=1}^m b_k \sum_{j=0}^\infty  T_j 
e^{-sj k}=\sum_{k=1}^m b_k \hat T(e^{-sk})$.
Now, for $k\in\N$,
$$
\hat T(e^{-sk})\sim\frac{1}{\ell(1/s)}\int_0^\infty e^{-skx}\, \dd (p(x) 
x^\alpha)P=\frac{1}{s^\alpha\ell(1/s)}\int_0^\infty e^{-kx}\, \dd (p(x/s) x^\alpha)P.
$$
Hence,~\eqref{eq-firstQ} follows from the previous displayed equation after 
multiplication with $b_k$ and  summation over $k$.

{\bf{Step 2}} Let $g=1_{[e^{-1},1]}$. Let $\eps>0$ be arbitrary and let $z$ be a 
continuity point of $p$. Therefore we can choose a $\delta > 0$ such that
\begin{equation} \label{eq:p-cont}
p((1+\delta) z ) (1 + \delta)^\alpha -  p((1-\delta) z -) (1 - \delta)^\alpha < 
\frac{\varepsilon}{2}.
\end{equation}
By Lemma~\ref{lemma-poly} in Appendix~\ref{subsec-lemma1}, for these $\varepsilon$ and 
$\delta$ we can choose a polynomial $Q$ such that $Q \geq g$ on $[0,1]$ and for any 
measure $\mu$ on $(0,\infty)$ such that $\int_0^\infty e^{-x} \mu(\dd x) < \infty$,
\begin{equation} \label{eq-Qgp}
\int_0^\infty \left[ Q(e^{-x}) - g(e^{-x}) \right] \mu( \dd x) \leq 
\varepsilon \int_0^\infty e^{-x} \mu(\dd x) + \mu( ( 1- \delta, 1 + \delta)).
\end{equation}
Using that $Q\geq g$ and~\eqref{eq-firstQ}, we obtain
\begin{align*}
\sum_{j=0}^{ \lfloor c^{n/\alpha}z \rfloor} T_j &
=\sum_{j=0}^\infty T_j g \Big( e^{-\frac{j}{ \lfloor c^{n/\alpha}z \rfloor}} \Big) \\
& \le \sum_{j=0}^\infty T_j Q\Big(e^{-\frac{j}{\lfloor c^{n/\alpha}z \rfloor}}\Big)\\
&\sim \frac{ c^{n}z^{\alpha}}{\ell(c^{n/\alpha}z)}
\int_0^\infty Q(e^{-x})  \dd (p(x \lfloor c^{n/\alpha}z \rfloor ) x^\alpha) \, P.
\end{align*}

We apply \eqref{eq-Qgp} for the measure 
$\mu_n(\dd x) = \dd (p(x \lfloor c^{n/\alpha}z \rfloor ) x^\alpha)$. Since $p$ is bounded
\begin{equation} \label{eq:sup-intp}
\sup_{n \geq 1} \int_0^\infty e^{-x} \dd (p(x \lfloor c^{n/\alpha}z \rfloor ) x^\alpha) 
=: K < \infty.
\end{equation}
Using the monotonicity of $p(x) x^\alpha$, the logarithmic periodicity of $p$, and 
\eqref{eq:p-cont}
\[
\begin{split}
\mu_n( (1-\delta, 1+\delta)) 
& \leq p( (1+\delta) \lfloor c^{n/\alpha}z \rfloor ) (1+\delta)^\alpha - 
p( (1-\delta) \lfloor c^{n/\alpha}z \rfloor ) (1-\delta)^\alpha \\
& = ( \lfloor c^{n/\alpha} z \rfloor )^{-\alpha} \Big[  
p( (1+\delta) \lfloor c^{n/\alpha}z \rfloor ) 
( (1+\delta) \lfloor c^{n/\alpha}z \rfloor )^\alpha \\
& \qquad - p( (1-\delta) \lfloor c^{n/\alpha}z \rfloor ) 
( (1- \delta) \lfloor c^{n/\alpha}z \rfloor )^\alpha \Big] \\
& \leq ( \lfloor c^{n/\alpha} z \rfloor )^{-\alpha}
\Big[  p( (1+\delta) c^{n/\alpha}z ) ( (1+\delta) c^{n/\alpha}z )^\alpha \\
& \qquad - p( (1-\delta) (c^{n/\alpha}z -1) ) ( (1- \delta)(c^{n/\alpha}z -1) )^\alpha 
\Big] \\
& \rightarrow p( (1+\delta) z ) (1+\delta)^\alpha -
p( (1-\delta) z -) (1- \delta)^\alpha < \frac{\varepsilon}{2}.
\end{split}
\]
Thus for $n$ large enough
\begin{equation} \label{eq:p-cont1}
\mu_n( (1-\delta, 1+\delta)) < \varepsilon. 
\end{equation}
Thus, using \eqref{eq-Qgp}, \eqref{eq:sup-intp}, \eqref{eq:p-cont1}, and that $z$ is a 
continuity point of $p$, for $n$ large enough
\[
\begin{split}
\int_0^\infty Q(e^{-x})  \dd (p(x \lfloor c^{n/\alpha}z \rfloor ) x^\alpha)
& \leq \int_0^\infty g(e^{-x}) \dd (p(x \lfloor c^{n/\alpha}z \rfloor ) x^\alpha)
+ \varepsilon (K + 1) \\
& \leq p(z) + \varepsilon (K + 2).
\end{split}
\]
Reverse inequality can be shown similarly. Thus the conclusion follows  since $\eps > 0$ 
is arbitrary.
\end{proof}

\section{Examples of null recurrent renewal
shifts satisfying tail condition (\ref{eq-dgp-distf})}
\label{sec-examples}

In this section we construct three  dynamical systems that can be modelled by null 
recurrent renewal shifts (as described in Section~\ref{sec-intro}) that
satisfy  tail condition (\ref{eq-dgp-distf}).
As such, we  justify that 
Theorem~\ref{thm-merge} (describing the distributional behaviour of $S_{n_r}$) and 
Proposition \ref{prop:renewal} (and thus~\eqref{eq-avMS}, describing the
limit behaviour of the average transfer operator $\sum_{j=0}^{[ c^{n/\alpha}z]} L^j 1_Y$) 
apply to these examples. We recall  that dynamical systems that can be modelled by null 
recurrent renewal shifts have the  property that the sequence 
$\{\tau\circ T_Y^n\}_{n\ge 1}$ is iid. 

The first two examples in Subsections~\ref{subsec-contM} and~\ref{subsec-ncontM} can be 
regarded as perturbations of the intermittent map with linear 
branches preserving an infinite measure, known
as Wang map (Gaspard and Wang \cite{GW88}); an exact form of a (unperturbed) Wang type 
map $T_0:[0,1]\to 
[0,1]$ in terms of the parameter  $\alpha>0$  is given by~\eqref{eq-firstW} with 
$\eps=0$. 
We recall that $T_0$ is a linear version of the smooth 
intermittent map studied by Pomeau and Manneville \cite{PM80} with $T_0'(x)>1$ for all 
$x\in (0,1]$ and 
$T_0'(0)=1$ (so, it is expanding everywhere, but at the so-called indifferent fixed point 
$0$).
When $\alpha<1$, the map $T_0$
preserves an infinite measure, equivalently it is a null recurrent renewal chain, where 
the first return $\tau$ to $Y=[1/2, 1]$ satisfies strict regular variation:
$m(\tau > n) = \frac12 n^{-\alpha}$ for the normalised Lebesgue measure $m$ on
$Y = [\frac12,1]$. We recall that this strict regular variation
implies that $T_0$ satisfies a Darling--Kac law and that 
$n^{-\alpha}\sum_{j=0}^{n} L^j 1_Y\to C$, \ae on $Y$, as $n\to\infty$, for some $C>0$ 
(depending only on the parameters of $T_0$).

As clarified in subsections~\ref{subsec-contM} and~\ref{subsec-ncontM}  a slight 
perturbation of $T_0$ gives rise to different tails $m(\tau > n)$, which are no longer 
regularly varying. Instead, we show that  $m(\tau > n)$ satisfies  tail condition 
(\ref{eq-dgp-distf}) with a continuous and a noncontinuous, respectively,  logarithmic 
periodic function $M$ (identifying the involved sequence $k_n$).
Moreover, while the map in  subsection~\ref{subsec-contM} is differentiable at $0$ from 
the right (so, $0$ is an indifferent fixed point), the map in 
subsection~\ref{subsec-ncontM} is not differentiable at $0$; for this second example we 
justify that we can still speak of `the derivative at $0$ along subsequences' being 
equal to $1$ (see equation~\eqref{eq-secWderiv} and text before it).

In subsection~\ref{subsec-Fib}, we introduce a family of maps $T_\lambda$ 
(as in~\eqref{eq-KFib}) generated out of the  sequence of Fibonacci numbers,
somewhat similar to, but simpler in structure than, the maps studied by Bruin and Todd 
in~\cite{BT12, BT15}.  In short, the maps $T_\lambda$ are Kakutani towers over  linear 
maps (as in~\eqref{eq-Ty})  generated out of the  Fibonacci sequence. As such, they are 
isomorphic to renewal shifts and equation~\eqref{eq-tauFib} says that they are null 
recurrent renewal shifts. As shown  in Proposition~\ref{prop-semfib},  the maps 
$T_\lambda$  satisfy tail condition~\eqref{eq-dgp-distf}, identifying the involved 
sequence $k_n$. This justifies that Theorem \ref{thm-merge} applies to $T_\lambda$.
Moreover, the form of the sequence $k_n$ in   Proposition~\ref{prop-semfib} allows for an 
immediate application of  Proposition \ref{prop:renewal} and thus~\eqref{eq-avMS} (see 
text after the proof of  Proposition~\ref{prop-semfib}).

We believe that Proposition~\ref{prop-semfib}  together with Theorem 8.14 by 
Bruin et al.~\cite{BTT} can 
be used to show that Theorem \ref{thm-merge} applies to the family of countably piecewise 
linear (unimodal) maps with Fibonacci combinatorics  studied in~\cite{BT12, BT15}.
For simplicity of the exposition, in this work we restrict to the self-contained model 
introduced in subsection~\ref{subsec-Fib}.

\subsection{First perturbation of the Wang map: continuous case}
\label{subsec-contM}

Fix $\alpha \in (0,1)$, $c > 1$, $\eps > 0$ and for $n \geq 1$, define
\begin{equation} \label{eq-Wang-xi}
\xi_n = \frac12 n^{-\alpha} \left(1+2\eps 
\sin \left( \frac{2\pi \alpha \log n}{\log c} \right) 
\right).
\end{equation}
Note that $\xi_1 = \frac{1}{2}$. First we show that $\xi_n$ is strictly decreasing. 
Let 
\begin{equation} \label{eq:M-Wang1}
M(x) = \frac{1}{2} \left( 1 + 2\eps 
\sin \left( \frac{2\pi \alpha \log x}{\log c} \right) \right).
\end{equation}
Then $M$ is bounded and bounded away from zero for $\eps < \frac12$, and
$M(c^{1/\alpha}x) = M(x)$ for all $x \in (0,\infty)$.
Furthermore $R(x) := -M(x) x^{-\alpha}$ is continuous and nondecreasing
for small $\eps$. Indeed, short calculation shows that $R'(x) = 2^{-1} x^{-(1+\alpha)} 
(\alpha + O(\eps))$, where $|O(\eps)| \leq 2 \alpha \varepsilon ( 1 + 2 \pi / \log c)$. 
This implies that $\xi_n = -R(n)$ in \eqref{eq-Wang-xi} is decreasing, whenever 
$\varepsilon > 0$ is small enough, which we assume in the following.

Set $\xi_0 = 1$, and define a countably piecewise linear map
\begin{equation}
\label{eq-firstW}
T_\eps(x) = 
\begin{cases} 
\frac{\xi_n-x}{\xi_n-\xi_{n+1}} \xi_n + \frac{x-\xi_{n+1}}{\xi_n-\xi_{n+1}} \xi_{n-1},
& \text{ for } x \in [\xi_{n+1}, \xi_n], n \geq 1,  \\
 2x-1, & \text{ for } x \in (\frac12,1].
\end{cases}
\end{equation}
Then $T_\eps(\xi_n) = \xi_{n-1}$ for $n \geq 1$, and the graph of $T_\eps$ consists
of line segments connecting the points $(\xi_n, \xi_{n-1}) \in [0,1]$ for $n \geq 1$, as 
well as $(\frac12,0)$ to $(1,1)$. For $\eps = 0$ we have exactly the Wang map $T_0$.
The graph of $T_\eps$ for $\eps > 0$ has Hausdorff distance $\leq \eps$ to the graph
of $T_0$ and thus, $\| T_\eps-T_0 \|_\infty \leq \eps$.

Straightforward calculation shows that
\begin{align}
\label{eq-Deltaxi}
\begin{split}
\Delta \xi_n & := \xi_n - \xi_{n+1} \\
& =
\frac{\alpha n^{-\alpha -1}}{2}  \left[ 1 + 2 \varepsilon 
\sin \left( \frac{2 \pi \alpha}{\log c} \log n \right) - 
\frac{4 \pi \varepsilon }{\log c} \cos \left( \frac{2 \pi \alpha}{\log c} \log 
n\right) \right] + O(n^{-2 -\alpha}),
\end{split}
\end{align}
from which we see that $T_\varepsilon$ is differentiable at 0 from the right, and
\[
T_\varepsilon' (0) = 1,
\]
so 0 is an indifferent fixed point.

Let $\tau$ be the first return time to $[1/2,1]$. We see that for $n \geq 1$, 
$\{ \tau = n + 1 \} = ((1+\xi_{n+1})/2, (1+\xi_n)/2]$, thus
$$
m(\tau > n)=\sum_{j\ge n}\Delta \xi_n
= \xi_n = \frac12 n^{-\alpha} 
\left(1+2\eps \sin \left( \frac{2\pi \alpha \log n}{\log c} \right) \right),
$$
where $m$ is the normalised Lebesgue measure on $Y = [\frac12,1]$.

Define $k_n = \lfloor c^n \rfloor$, $\ell \equiv 1$ and $A_k = k^{1/\alpha}$, so 
$A_{k_n} = \lfloor c^{n}\rfloor^{1/\alpha}$. In this case $\delta(x)$ can be simply 
changed to $x$ in \eqref{eq-dgp-distf}. Thus $\tau$ satisfies \eqref{eq-dgp-distf} with 
$M$ in \eqref{eq:M-Wang1}.

Figures \ref{fig:3} and \ref{fig:4} show the limiting $G_\lambda$ and $H_\lambda$ 
functions for the parameter values $\alpha = 0.5$, $\varepsilon = 0.04$, and $c = 2$. The 
distribution function is calculated numerically from the characteristic function using 
the Gil-Pelaez--Ros\'en inversion formula \cite{GilPelaez, Rosen}.

\begin{figure}
\begin{center}
\includegraphics[height=8.3cm]{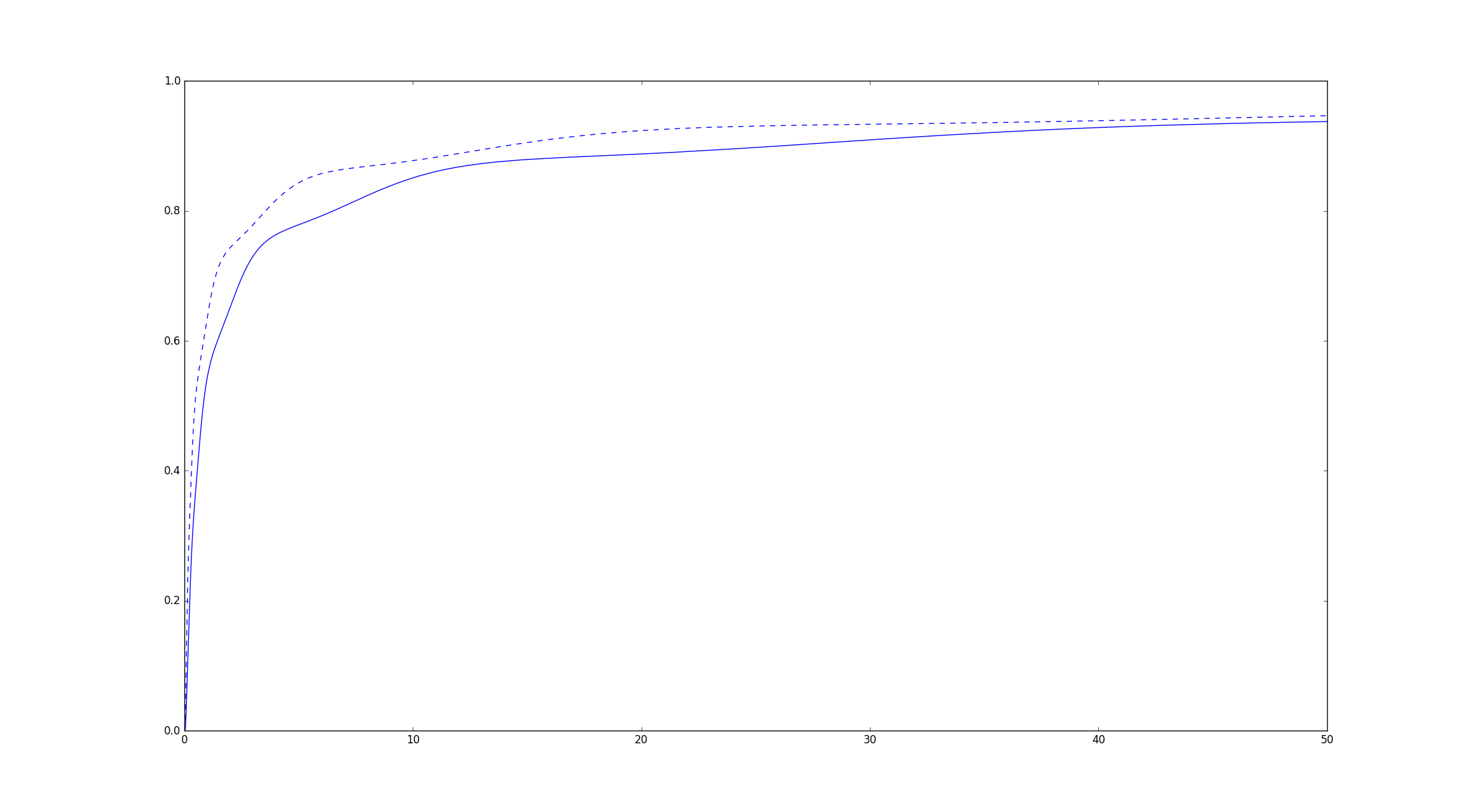}
\vspace*{-20pt}
\caption{The distribution functions $G_{0.5}$ (solid) and $G_{0.75}$ (dashed).}
\label{fig:3}
\end{center}
\end{figure}

\begin{figure}
\begin{center}
\includegraphics[height=8.3cm]{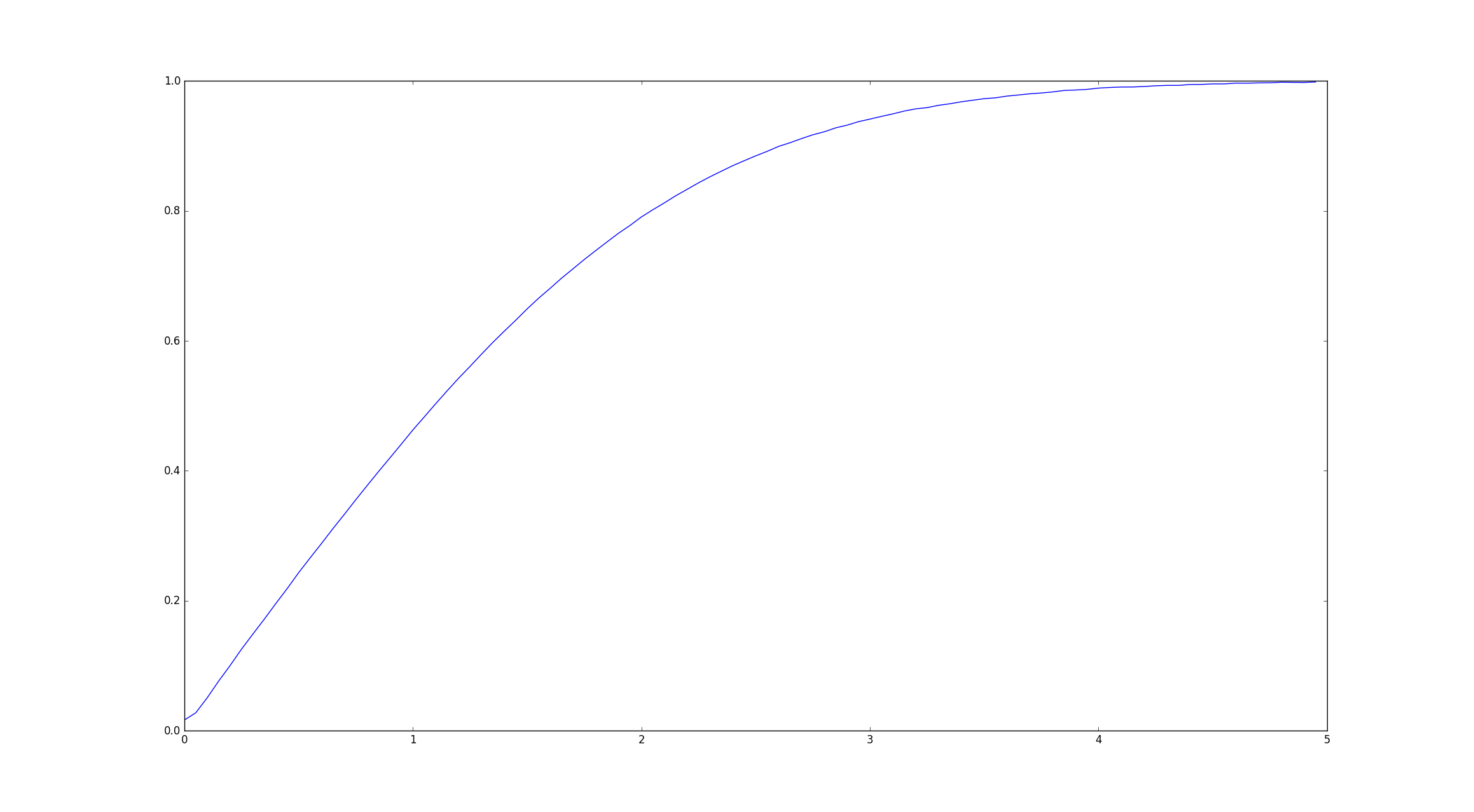}
\vspace*{-20pt}
\caption{The $H_1$ function.}
\label{fig:4}
\end{center}
\end{figure}

\subsection{Second perturbation of the Wang map: noncontinuous case}
\label{subsec-ncontM}

The resulting distribution for the example in this section is not exactly the generalised 
St.~Petersburg distribution, but it is similar to it.
We proceed as in the previous section, but this time suppose that 
$$
\xi_n = \frac12 n^{-\alpha} (1+2^{\{ \alpha \log_2 n\} }), \ n \geq 1,
$$
and set $\xi_0 = 1$. As before,  $\{ \cdot \}$ stands for the fractional part. It is easy 
to see that $\xi_n$ is strictly decreasing.
Define
\[
T(x) = 
\begin{cases} 
\frac{\xi_n-x}{\xi_n-\xi_{n+1}} \xi_n + \frac{x-\xi_{n+1}}{\xi_n-\xi_{n+1}} \xi_{n-1}, & 
\text{for }  x \in [\xi_{n+1}, \xi_n], \ n \geq 1,  \\
2x-1, & \text{ for } x \in (\frac12,1].
\end{cases}
\]
and note that $T(\xi_{n+1}) = \xi_n$ for all $n$.
It turns out that the derivative at $0$ does not exist. Indeed,
$$
\frac{T(\xi_{n+1})}{\xi_{n+1}} = \frac{\xi_n}{\xi_{n+1}}
= \frac{(n+1)^\alpha}{n^\alpha} 
\frac{1+2^{\{ \alpha \log_2 n \}} }{1+2^{\{ \alpha \log_2 (n+1) \}}}.
$$
Clearly $\frac{(n+1)^\alpha}{n^\alpha} \to 1$, but
$\frac{1+2^{\{ \alpha \log_2 n \}} }{1+2^{\{ \alpha \log_2 (n+1) \}}} $
is only close to $1$ if there is {\bf no} integer between $\alpha \log_2 n$ and
$\alpha \log_2 (n+1)$. Equivalently $n \neq \lfloor 2^{k/\alpha} 
\rfloor$ for any integer $k$. 
Thus, the sequence $(T(\xi_{n})/\xi_n)$ has two limit points, 1 and $3/2$:
\[
1 = \liminf_{x \downarrow 0} \frac{T(x)}{x} < \limsup_{x \downarrow 0} \frac{T(x)}{x} = 
\frac{3}{2}.
\]
Although the derivative at $0$ does not exist, we can still speak of the `derivative at 
$0$ along subsequences'. Indeed, if $(n_j)$ is any increasing sequence taking values in
$\N \setminus \{ \lfloor 2^{k/\alpha} \rfloor : k \in \N \}$,
then $\lfloor \alpha \log_2 n_j \rfloor =  \lfloor \alpha \log_2(n_j+1) \rfloor$.
Therefore, $\{ \alpha \log_2 n_j \} - \{ \alpha \log_2 (n_j+1) \} \to 0$, and
\begin{equation*} \label{eq-secWderiv}
\lim_{j \to \infty}
\frac{1+2^{\{ \alpha \log_2 n_j \}} }{1+2^{\{ \alpha \log_2 (n_j+1) \}}} =1.
\end{equation*}

Again, let $\tau$ be the first return time to $[1/2,1]$, and $m$ the normalised Lebesgue 
measure on $[1/2, 1]$. For $n \geq 1$, $m(\tau=n +1 )= \Delta \xi_n=\xi_n-\xi_{n-1}$, 
thus
$$
m(\tau > n)=\sum_{j\ge n}\Delta \xi_n
= \xi_n = \frac12 n^{-\alpha} \left(1+2^{\{ \alpha \log_2 n\} } \right).
$$
Thus $\tau$ satisfies~\eqref{eq-dgp-distf} with $\ell^* \equiv 1$, 
$M(x)=(1+2^{\{ \alpha \log_2 x\} })/2$, $k_n = 2^{n}$, and $c=2$. 
Again $\delta(x)$ is changed to $x$. We have 
$M(2^{1/\alpha} x) = M(x)$ for all $x \in (0,\infty)$, as required.

\subsection{Piecewise linear maps generated out of the  Fibonacci sequence}
\label{subsec-Fib}

Denote the sequence of Fibonacci numbers by
$\{ S_0, S_1, S_2, S_3, \dots \} = \{ 1,2,3,5, \dots\}$.
From Binet's formula, we get
\begin{equation}\label{eq:fibo}
S_n = \frac{1}{\sqrt{5}} ( G^{n+2} - (-G)^{-n-2}) 
=  q_0 (1 - (-1)^n G^{-2(n+2)} )\, G^n,
\end{equation}
where $G = \frac{1+\sqrt{5}}{2}$ is the golden mean  and 
$q_0 = \frac{3+\sqrt{5}}{2\sqrt 5}$.

Fix $\lambda\in (1/G,1)$, let $Y = [0,1]$ and define $T_{Y,\lambda} : Y\to Y$ by 
$T_{Y, \lambda}(0)=0$ and 
\begin{equation} \label{eq-Ty}
T_{Y,\lambda}(y) = \frac{\lambda^{n}-y}{\lambda^n-\lambda^{n+1}} \qquad
y \in (\lambda^{n+1}, \lambda^n].
\end{equation}
The map $T_{Y,\lambda}$ preserves the Lebesgue measure $m$. Given the probability 
preserving transformation $(Y, \B(Y), m , T_{Y,\lambda})$ and a measurable function 
$\tau: Y\to \N$ we construct
the Kakutani tower/map $(X, \B(X), \mu, T_\lambda)$
(see, for instance,~\cite[Chapter 5]{Aaronson} and references therein) as follows.

Let 
\begin{itemize}
\item $X := \cup_{n\geq 1} \left( \{\tau \geq n\}\times  \{n\} \right)$;
\item $\mathcal{B}(X):= 
\sigma \left(  C_n \times  \{n\}: \, 
C_n \in \B(Y) \cap \{\tau \geq n\} \  \text{for all } n\geq 1 \right)$;
\item for all $A \in ( \B(Y) \cap \{\tau \geq n \}) $,
set $\mu(A\times\{n\})=m(A)$.
\end{itemize}
Given  $T_{Y,\lambda}$ introduced in~\eqref{eq-Ty}, define the tower map
$T_\lambda:X\to X$ by
\begin{align} \label{eq-KFib}
T_\lambda(x,n) = 
\begin{cases} 
(y, n+1), & \tau(y) > n, \\
(T_{Y,\lambda}(y), 1), &\tau(y)=n.
\end{cases}
\end{align}
By construction, $T_\lambda$ preserves $\mu$. Moreover, $\tau$ is the first return time 
of $T_\lambda$ to the 
base $Y \times \{1\}$ and 
$T_\lambda^{\tau(y)}(y,1) = (T_{Y,\lambda}(y), 1)$.
In what follows, we set $\tau(y) = S_n$ for $y \in (\lambda^{n+1}, \lambda^n]$, so
\begin{equation}
\label{eq-tauFib}
m(\tau = S_n) = (1-\lambda) \lambda^n.
\end{equation}
We shall show that $m(\tau>x)$ satisfies (\ref{eq-dgp-distf}).

By~\eqref{eq-tauFib}, for $S_n \leq x < S_{n+1}$, we have
$$
m (\tau > x) = m (\tau \geq S_{n+1}) = \sum_{j \geq n+1} \mu(\tau = S_j)
= (1-\lambda)\sum_{j \geq n+1}\lambda^j = q_n S_n^{-\alpha},
$$
where $\alpha = -\log_G \lambda$ and
\begin{equation*} \label{eq-qn}
q_n = \lambda 
\left(\frac{3+\sqrt{5}}{2\sqrt{5}}\right)^\alpha  (1-(-1)^n G^{-2(n+2)})^\alpha.
\end{equation*} 
We can easily see that  
\begin{equation*} \label{eq:limq}
\lim_{n \to \infty} q_n = \lambda \left(\frac{3+\sqrt{5}}{2\sqrt{5}}\right)^\alpha
=: q_\infty > 0.
\end{equation*}
Moreover, since $\lambda\in (1/G,1)$, we have $\alpha\in (0,1)$. 
Since
$$
S_n^{-\alpha} = x^{-\alpha} \left( \frac{x}{S_n} \right)^\alpha
= x^{-\alpha} G^{\alpha \log_G \frac{x}{S_n} },
$$
we have
\begin{equation} \label{eq-tauFib2}
m(\tau > x) = q_n x^{-\alpha} G^{\alpha \log_G \frac{x}{S_n} }
\qquad\text{ for } S_n \leq x < S_{n+1}.
\end{equation}
Equipped with the above, we state

\begin{prop}\label{prop-semfib} 
The tail of $\tau$ in \eqref{eq-tauFib2} satisfies
\eqref{eq-dgp-distf} with
$c = G^\alpha$, $k_n = \lfloor G^{\alpha n } \rfloor$, $\ell(x) \equiv 1$, 
$A_k = k^{1/\alpha}$, and $M(x) = q_\infty G^{\alpha \{ \log_G (x/q_0) \}}$; i.e.
$$
m(\tau > x) = x^{-\alpha} \left( M(x) + h(x) \right),
$$
where $\lim_{n\to\infty}h(A_{k_n} x)=0$ whenever $x$ is continuity point of $M$.
\end{prop}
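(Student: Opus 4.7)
The plan is to exploit the hypothesis $\ell \equiv 1$ to reduce \eqref{eq-dgp-distf} to the simpler form $\overline F(x) = x^{-\alpha}(M(x) + h(x))$; by the footnote following \eqref{eq:renewal-ass}, in this case $\delta(x)$ may be replaced by $x$. The log-periodicity $M(c^{1/\alpha} x) = M(Gx) = M(x)$ with period $G$ is immediate from $c = G^\alpha$ and the presence of the fractional part in the definition of $M$, so it remains to produce a right-continuous error $h$ and to verify $h(A_{k_n}x) \to 0$ at continuity points $x$ of $M$.

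For the explicit computation I would first sum the geometric series in \eqref{eq-tauFib} to obtain $\overline F(x) = \lambda^{m+1}$ whenever $x \in [S_m, S_{m+1})$; call $m = m(x)$ the \emph{Fibonacci index}. On the geometric grid $[q_0 G^k, q_0 G^{k+1})$, the definition of $M$ gives $M(x) = q_\infty (x/(q_0 G^k))^\alpha$; call $k = k(x)$ the \emph{geometric index}. Then set $h(x) := x^\alpha \overline F(x) - M(x)$. A two-line calculation using $\lambda = G^{-\alpha}$ together with the identity $q_\infty = \lambda q_0^\alpha$ (visible from the definitions of $q_\infty$ and $q_0$) shows that $h(x) = 0$ whenever $m(x) = k(x)$; outside this set $h$ is bounded.

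To finish, I would quantify the mismatch. Binet's formula \eqref{eq:fibo} gives $|S_k - q_0 G^k| = q_0 G^{-(k+4)}$, so the set $\{m(x) \neq k(x)\}$ lies in tiny intervals abutting each $q_0 G^k$ of multiplicative (equivalently, log-$G$) width $O(G^{-2k})$. For $y_n := A_{k_n} x$, the estimate $A_{k_n} = G^n\bigl(1 + O(G^{-\alpha n})\bigr)$ yields $\log_G(y_n/q_0) = n + \log_G(x/q_0) + O(G^{-\alpha n})$. Continuity of $M$ at $x$ forces $\log_G(x/q_0) \notin \Z$, so $\{\log_G(y_n/q_0)\}$ stays bounded away from $0$ and $1$ by some positive constant; since the mismatch set requires distance $O(G^{-2n})$ from an integer, $y_n$ eventually falls outside it and hence $h(y_n) = 0$ for all large $n$ (in particular $h(y_n) \to 0$). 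Right-continuity of $h$ follows from that of $\overline F$ (as $1 - $ CDF) and of $M$ (since $t \mapsto \{t\}$ is right-continuous).

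The main bookkeeping obstacle will be tracking the parity of $k$: the sign of $S_k - q_0 G^k$ alternates with $k$, so the bad intervals sit on opposite sides of $q_0 G^k$ depending on parity, and one has to describe the case $m = k$ (and its complement) accordingly. Beyond this routine case-split, everything rests on the clean cancellation $q_\infty = \lambda q_0^\alpha$ and the exponential separation between the bad-set scale $O(G^{-2n})$ and the fixed positive gap between $\{\log_G(x/q_0)\}$ and $\{0,1\}$ afforded by continuity of $M$ at $x$.
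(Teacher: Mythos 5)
Your argument is correct and follows the same overall scheme as the paper (discrete reduction with $\delta(x)\mapsto x$, Binet's formula, characterisation of $C_M$ via $\{\log_G(x/q_0)\}>0$), but you reorganise the key cancellation in a way that is a bit sharper than what the paper records. The paper (in \eqref{eq:Fib-h}) writes $h$ as a sum of two explicit terms — one comparing $\log_G(x/S_n)$ with $\{\log_G(x/q_0)\}$, the other carrying $(q_n-q_\infty)$ — and argues that each tends to $0$ along $A_{k_n}x$. You instead observe that the $n$-dependence of $q_n$ \emph{exactly} compensates the Binet correction in $S_n$: indeed on $[S_n, S_{n+1})$ one has $x^\alpha\overline F(x)=q_n(x/S_n)^\alpha=\lambda x^\alpha G^{-\alpha n}$ with all $(1-(-1)^n G^{-2(n+2)})^{\pm\alpha}$ factors cancelling, and since $q_\infty=\lambda q_0^\alpha$ this equals $M(x)$ precisely when $k(x)=n$. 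Thus $h\equiv 0$ away from the short transition windows around $q_0G^k$, and your conclusion is that $h(A_{k_n}x)$ is \emph{eventually zero} at continuity points — a stronger statement than the paper's $h(A_{k_n}x)\to 0$. The quantitative comparison you make ($\{\log_G(A_{k_n}x/q_0)\}$ converges to the fixed $\{\log_G(x/q_0)\}\in(0,1)$ at rate $O(G^{-\alpha n})$, while the bad windows have log-width $O(G^{-2n})$) is valid, and the parity bookkeeping you flag is genuinely the only remaining fiddly point. In short: same proof, cleaner cancellation, slightly stronger output.
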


\begin{proof}
First recall again that if $\ell \equiv 1$ and $k_n = \lfloor c ^n \rfloor$ then 
$\delta(x)$ in \eqref{eq-dgp-distf} can be replaced with $x$.

By \eqref{eq-tauFib2} we may write
\[
m( \tau > x ) = x^{-\alpha}
\left( q_\infty G^{\alpha \{ \log_G \frac{x}{q_0} \}} + h(x) \right)
\]
with
\begin{equation} \label{eq:Fib-h}
h(x) = q_\infty \left( G^{\alpha \log_G \frac{x}{S_n}} - G^{\alpha \{ \log_G 
\frac{x}{q_0} \}} \right) + (q_n - q_\infty ) G^{\alpha \log_G \frac{x}{S_n}}, \quad
x \in [S_n, S_{n+1}). 
\end{equation}
Therefore, we only have to prove that the error function $h$ satisfies the necessary 
properties in \eqref{eq-dgp-distf}; i.e.~$\lim_{n \to \infty} h(A_{k_n} x) = 0$, 
whenever 
$x \in C_M$.

In \eqref{eq:Fib-h} the second summand converges to 0 as $x \to \infty$. So we have to 
prove that for any $x \in C_M$
\[
\lim_{n \to \infty} \log_G \frac{x \lfloor G^{\alpha n} \rfloor^{1/\alpha}}{S_{m}} =
\left\{ \log_G \frac{x}{q_0} \right\},
\]
where $m= m_n$ is uniquely determined by $S_{m} \leq x \lfloor G^{\alpha n} 
\rfloor^{1/\alpha} < S_{m +1}$. This follows easily from \eqref{eq:fibo} and that $x \in 
C_M$ if and only if $\{ \log_G (x / q_0) \} >0$.
\end{proof}

Proposition~\ref{prop-semfib} shows that the return times corresponding to the map 
$T_\lambda$  satisfy tail condition~\eqref{eq-dgp-distf}. Hence, a semistable law for 
$(T_{Y,\lambda},\tau)$ holds along  $k_n = \lfloor G^{\alpha n}\rfloor$.
Thus, Theorem \ref{thm-merge} applies to $T_\lambda$, giving the distributional behaviour 
of $S_{n_r}$, for suitable subsequences $n_r$.
Moreover, since $k_n = \lfloor G^{\alpha n}\rfloor$,   Proposition 
\ref{prop:renewal} (and thus~\eqref{eq-avMS}) holds.

\section{A process, which is not isomorphic to a Markov renewal chain}
\label{sec-dets}

In this section we show that  (the conclusion of) Theorem~\ref{thm-merge} and 
Theorem~\ref{prop-genav} apply to infinite measure preserving systems  that are  not 
isomorphic to Markov renewal chains. To fix terminology,  in 
Subsection~\ref{subsec-defdyn} we provide a 
simple smooth version of the renewal shift~\eqref{eq-firstW} 
considered in Subsection~\ref{subsec-contM}; this is given by
 the family of smooth Markov maps $f_\eps:[0,1]\to [0,1]$ (defined in~\eqref{eq-fepsdet}) 
with indifferent fixed point at $0$.
In Subsection~\ref{subsec-ind}, we note that the first return time to a subset of $[0,1]$ 
satisfies the tail condition~\eqref{eq-dgp-distf} and verify that the corresponding 
induced family of maps  satisfy good distortion properties.
The latter allows to conclude in Subsection~\ref{subsec-spectral} that the main 
functional analytical properties of the induced map hold and 
in Subsection~\ref{subsec-rediid} we justify that the conclusion of 
Theorem~\ref{thm-merge} holds for $f_\eps$.
Using the same functional analytic properties in Subsection~\ref{subsec-tr} we  show that 
Theorem~\ref{prop-genav} applies, obtaining the exact sequences and scaling for the 
convergence of the average transfer operator~\eqref{eq-unif},
uniformly on compacts of $(0,1]$.
Finally, we mention that, although the results of this sections are in terms of a  simple 
example, the same arguments apply to  dynamical systems with infinite measure satisfying  
tail condition~\eqref{eq-dgp-distf} along with properties (A1) and (A2) 
stated below. For a discussion of our results on infinite measure preserving systems we refer to
Subsection~\ref{subsec-rediid}.

\subsection{A smooth version of the example in Subsection~\ref{subsec-contM}}
\label{subsec-defdyn}

For fixed $\alpha\in (0,1)$, $c>1$ and $\eps$ small enough, we define $(\xi_n)_{n \geq 0}$ 
as in Subsection~\ref{subsec-contM}, that is 
$\xi_n = \frac{1}{2} n^{-\alpha} \left(1+2\eps \sin(\frac{2\pi \alpha \log n}{\log 
c})\right)$, $n\ge 2$ and $\xi_0 = 1$, $\xi_1 = \frac12$. We recall that, 
as clarified in Subsection~\ref{subsec-contM}, $\xi_n$  is decreasing.
In what follows, out of  $(\xi_n)_{n \geq 0}$  we define a smooth map $f_\eps: [0,1]\to 
[0,1]$ via the map $f_{\eps,n}$ defined below.

A lengthy computation based on \eqref{eq-Deltaxi} (see Appendix~\ref{subsec-lemma2} for 
details) shows that 
there exist $\alpha_n = (1+\alpha+O(n^{-1}))(1+O(\eps))$ and $r_n = O(1)$ such that 
\begin{equation}\label{eq:alphan}
\frac{\Delta \xi_{n-2}}{\Delta \xi_{n-1}} = \frac{\xi_{n-2}-\xi_{n-1}}{\xi_{n-1}-\xi_n} 
= 1+\frac{\alpha_n}{n}  +\frac{r_n}{n^2} + R_n,
\end{equation}
and $R_n = O(n^{-3})$.
Let $f_{\eps, n}:[\xi_n,\xi_{n-1}] \to [\xi_{n-1},\xi_{n-2}]$, $n\ge 2$, be defined by
$$
f_{\eps,n}(x) = \frac{A_n}{2} \frac{(x-\xi_n)^2}{\xi_{n-1}-\xi_n}
+ (1+\frac{\alpha_n}{n} + B_n) (x-\xi_n)  + \xi_{n-1},
$$
Here $A_n$ and $B_n$ will be chosen appropriately.
We note that
$$
f_{\eps,n}(\xi_n) = \xi_{n-1}, \qquad f_{\eps,n}(\xi_{n-1}) = \xi_{n-2}.
$$
The first is automatic, and the second follows provided
\begin{equation}\label{eq:An1}
\frac{r_n}{n^2} + R_n = B_n + \frac{A_n}{2}.
\end{equation}
The derivative is
$$
f'_{\eps,n}(x) = \frac{x-\xi_n}{\xi_{n-1}-\xi_n} A_n + 1+\frac{\alpha_n}{n} + B_n. 
$$
We note that 
$$
f'_{\eps,n}(\xi_n) = 1+\frac{\alpha_n}{n} + B_n, \qquad 
f'_{\eps,n}(\xi_{n-1}) = 1+\frac{\alpha_{n-1}}{n-1} + B_{n-1}. 
$$
provided that (the first equation above is automatic)
\begin{equation}\label{eq:An2}
A_n = \frac{\alpha_{n-1}}{n-1} - \frac{\alpha_n}{n} + B_{n-1} - B_n, \quad n \geq 3.
\end{equation}
Solving for $B_n$ from \eqref{eq:An1} and \eqref{eq:An2}, we get the recursive formula
\begin{equation*}\label{eq:Bn}
 B_n = - B_{n-1} + \frac{2r_n}{n^2} + 2R_n - 
 \frac{\alpha_{n-1}}{n-1} + \frac{\alpha_n}{n} 
=: -B_{n-1} + q_n,
\end{equation*}
for $q_n = \frac{2r_n}{n^2} + 2R_n - \frac{\alpha_{n-1}}{n-1} + \frac{\alpha_n}{n} = 
O(n^{-2})$.
Working out the recursion, we get
$$
B_n = (-1)^n \left( B_2 + \sum_{i=3}^n (-1)^i q_i \right), \quad i \geq 3,
$$
for arbitrary $B_2 \in \R$.
Clearly, 
$(B_n)_{n \geq 0}$ is bounded. Choose $B_2 = -\sum_{i=3}^\infty 
(-1)^i q_i$.
Then 
\begin{equation}\label{eq:Bn2}
B_n = (-1)^{n+1} \sum_{i=n+1}^\infty (-1)^i q_i, \quad n \geq 3.
\end{equation}
Note that
$$
q_n - q_{n+1} = \left( \frac{2r_n}{n^2} - \frac{ 2r_{n+1} }{ (n+1)^2 } \right) + 
2(R_n-R_{n+1}) -
 \left( \frac{\alpha_{n-1}}{n-1} - \frac{2\alpha_n}{n} - \frac{\alpha_{n+1}}{n+1} \right).
$$
One can check that the $\alpha_n$'s and $r_n$'s change so slowly with $n$ that 
$q_n-q_{n+1} = O(n^{-3})$; see Appendix \ref{subsec-lemma2}.
Rewriting \eqref{eq:Bn2}, we have 
$$
B_n = (-1)^{n+1} \sum_{k=1}^\infty \left[ (-1)^{n+2k-1} q_{n+2k-1} + (-1)^{n+2k} q_{n+2k} 
\right]
= \sum_{k=1}^\infty q_{n+2k-1} - q_{n+2k} = O(n^{-2}).
$$
By \eqref{eq:An2}, $A_n = O(n^{-2})$. Therefore $A_n$ and $B_n$ can be chosen 
appropriately.

Define the map $f_\eps:[0, 1] \to [0,1]$,
\begin{align}
\label{eq-fepsdet}
 f_\eps(x) =\begin{cases} 
 f_{\eps,n}(x), & x \in [\xi_n, \xi_{n-1}], n\ge 2 \\
 2x-1, & x>\frac 12.
\end{cases}
\end{align}
By~\eqref{eq-Deltaxi}, we have that 
$f_\eps$ is differentiable at $0$ (from the right) and $0$ is an indifferent fixed point.

\subsection{Induced map, tail distribution, infinite invariant measure
}
\label{subsec-ind}

Let $\tau$ be the 
first return time of $f_\eps$ to $Y=[\frac 12, 1]$ and  define the induced map 
$F_\eps=f_\eps^\tau$. Note that
\begin{align*}
F_\eps:Y \to Y, \qquad
F_\eps(x) = \begin{cases}
   2x-1  & \text{ if } x \in [\frac34, 1],\\      
   f_\eps^{n-1}(2x-1)  & \text{ if } x \in [\frac{\xi_n+1}{2}, \frac{\xi_{n-1}+1}{2}), 
n\ge 2,\\      
  \frac 12  & \text{ if } x=\frac 12,
\end{cases}
\end{align*}
has onto branches. In fact, as clarified below, the induced map $(Y,\mathcal A(Y), 
F_\eps, \gamma)$, where $\mathcal A(Y)$ the Borel sigma algebra on $Y$ and 
$\gamma=\{[\frac{\xi_n+1}{2}, \frac{\xi_{n-1}+1}{2})\}_{n\ge 1}$,  is \emph{Gibbs--Markov}
(for complete definitions see~\cite{AaronsonDenker01}, ~\cite[Chapter 4]{Aaronson}).

First, it follows from the above definition of $F_\eps$ and $\gamma$ that each element of 
$\gamma$ is mapped bijectively onto a union of partition elements. 

Next, differentiating $\Delta \xi_n$ in~\eqref{eq-Deltaxi}  w.r.t.~$n$ gives that it is 
strictly decreasing in $n$, so
$\Delta \xi_{n-2}/\Delta \xi_{n-1} > 1$ and $f'_{\varepsilon,n}(x) > 1$ for all $x \in 
[\xi_n, \xi_{n-1}]$.
Moreover, $f'_\varepsilon(x) = 2$ on $(\frac12, 1]$ by~\eqref{eq-fepsdet}.
By the chain rule we get $F'(x) \geq 2$ as well for every 
$x \in (1/2,1] \setminus\{ (\xi_n+1)/2 \}_{n\ge 2}$.
Thus,  $F_\eps$ is \emph{expanding} on each element of its Markov partition $\gamma$.
As a consequence, for every two points $x,y$ there exists  $n \geq 0$ such that $F^n(x)$ 
and $F^n(y)$ lie
in different elements of $\gamma$. Therefore, the atoms of the partition 
$\bigvee_{n=0}^\infty F^{-n}\gamma$
are points, which implies that $\gamma$ is a \emph{generating partition} for $F_\eps$ 
(that is, $\sigma(\{F_\eps^{-n}\gamma : n \ge 0\}) = \mathcal A(Y)$).

Also, by Lemma~\ref{lemma-distfn} in 
Appendix~\ref{subsec-lemma2}, 
$F_\eps$ is piecewise $C^2$ and the \emph{distortion condition} 
$\frac{|F_\eps''|}{(F_\eps')^2}<\infty$ 
holds.  The above verified properties, Markov generating partition, expansion and 
distortion conditions guarantees that  $(Y,\mathcal A(Y), F_\eps, \gamma)$ is 
Gibbs--Markov. For the fact that the above distortion condition can be used as part of 
the definition of a Gibbs--Markov map, see~\cite[Example 2]{AaronsonDenker01} 
and~\cite[Chapter 4]{Aaronson}.

Since  $(Y,\mathcal A(Y), F_\eps, \gamma)$ is 
Gibbs--Markov, $F_\eps$  preserves a measure $\mu_Y$ with density 
$h=\frac{\dd \mu_Y}{\dd m}$, with $m$ being the normalised Lebesgue measure on $Y$, 
bounded from above and below and $h\in C^2(Y)$ (see~\cite{AaronsonDenker01} 
and~\cite[Chapter 4]{Aaronson}). Thus,
\begin{align}
\label{eq-tailFep}
\nonumber\mu_Y(\tau>n)&=\int_{\frac 12}^{\frac{\xi_n+1}{2}}h(x)\, \dd m(x) 
=h(1/2)\xi_{n}(1+o(1))\\
&= \frac12 h(1/2)n^{-\alpha} 
\left(1+2\eps \sin \left( \frac{2\pi \alpha \log n}{\log c} \right) \right) (1+o(1)).
\end{align}
An $f_\varepsilon$-invariant measure $\mu$ can be 
obtained by pulling back: 
$$
\mu(A) = \sum_{n \geq 0} \mu_Y( f^{-n}(A) \cap \{ \tau > n\} )
$$
for every Borel measurable set $A$. 
Then $\mu([0,1]) = \sum_{n \geq 0} \mu_Y(\{ \tau > n\} ) = \infty$,
so $\mu$ is infinite.

Similar to  Subsection~\ref{subsec-contM}, we let $M(x) = \frac12 h(1/2)(1 + 2\eps \sin( 
\frac{2\pi \alpha \log x}{\log c}))$, define $k_n = \lfloor c^n \rfloor$ and set 
$A_k = k^{1/\alpha}$, so $A_{k_n} = \lfloor c^{n}\rfloor^{1/\alpha}$.
Thus, $\tau$ satisfies \eqref{eq-dgp-distf} with $\ell \equiv 1$.

\subsection{Functional analytical properties of the induced map}
\label{subsec-spectral}

Since it is not going to play a role, throughout the remaining of this section we suppress 
the dependency of the induced map
on $\eps$ and set $F:=F_\eps$.

Let $R:L^1(\mu_Y)\to L^1(\mu_Y)$ be the transfer operator associated with $F: Y\to Y$
defined by $\int_{Y} R^n v \cdot  w\, \dd \mu_Y=\int_{Y}  v \cdot  w\!\circ\!F^n\, \dd 
\mu_Y$, for 
all $v\in L^1(\mu_Y)$, $w\in L^\infty(\mu_Y)$ and $n\geq 1$.

To  apply the inversion procedure (duality rule) to $f_\eps$ and thus verify that the  
conclusion of Theorem~\ref{thm-merge} holds, we first need to understand the 
distributional behaviour of $Z_n=\sum_{j=0}^{n-1}\tau \! \circ \! F^j$.
To do so we recall  the classical procedure of establishing limit laws
for  Markov maps 
with good distortion properties, as developed by Aaronson and Denker 
in~\cite{AaronsonDenker01}; see also the survey paper by Gou\"ezel \cite{Gou15}. This 
means  to relate  Fourier transforms to perturbed transfer operators.
A rough description of the procedure for showing  convergence in distribution of $Z_n$ 
when appropriately scaled with some 
norming sequence $a_n$ goes as follows.
For $\theta\in \R$,
$$
\E_{\mu_Y}(e^{ i \theta a_n^{-1} Z_n})= \int_Y e^{ i\theta a_n^{-1} Z_n}\, \dd 
\mu_Y=
\int_Y R^n (e^{ i\theta a_n^{-1} \tau})\, \dd \mu_Y.
$$
The above formula says that understanding of the Fourier transform
$\E_{\mu_Y}(e^{ i \theta a_n^{-1} \tau_n})$, for $|\theta|/a_n$ sufficiently small,  comes 
down to understanding the behaviour of the perturbed transfer operator 
$$
\hat R(\theta)v:=R(e^{i\theta \tau}v ), \quad v\in L^1(\mu_Y).
$$

From Subsection~\ref{subsec-ind}, we know  that $(Y,\mathcal A(Y), F_\eps, \gamma)$ is 
Gibbs--Markov.
We recall some 
properties of $R$ in the Banach space  $\B$ of bounded 
piecewise (on each element of $\gamma$) H{\"o}lder functions 
with $\B$ compactly embedded in $L^\infty(\mu_Y)$.
The norm on $\B$ is 
$\| v \|_\B = |v|_\theta + |v|_\infty$,
where $|\cdot|_\infty$ is the usual sup norm, and
$|v|_\theta = \sup_{a \in \gamma} \sup_{x \neq y \in a} |v(x)-v(y)| / 
d_\theta(x,y)$, where $d_\theta(x,y) = \theta^{s(x,y)}$ for some $\theta\in (0,1)$,
and $s(x,y)$ is the separation time, i.e.~$s(x,y)$ is the minimum $n$ such  that 
$F^n(x), F^n(y)$ lie in different partition elements.

By Theorem 1.6 in \cite{AaronsonDenker01},
\begin{itemize}
\item[(A1)] $1$ is a simple, isolated eigenvalue in the spectrum of $R$, when viewed as 
an operator acting on $\B$.
\end{itemize}

Set $R_n v=R (1_{\{\tau=n\}}v)$,  $n\ge 1$, $v\in L^1(\mu_Y)$. Note that 

\[
\hat R (\theta) ( v) = R \left( \sum_{n=1}^\infty e^{i \theta n} 1_{\tau = n} v \right)
= \sum_{n=1}^\infty e^{i \theta n} R( 1_{\tau=n} v) = \sum_{n=1}^\infty e^{i \theta n } 
R_n(v),
\]
which says that $\hat R(\theta)=\sum_{n\ge 1} R_n  e^{in\theta}$ (in particular, $\hat 
R(0)=\sum_{n\ge 1} R_n$).

As shown in~\cite[Lemma 8 and formula (8)]{Sarig02}, which works with 
$R_n(v) = 1_Y L^n(1_{\tau =n } v)$,
\begin{itemize}
\item[(A2)] For $n\ge 1$, $R_n:\B\to\B$ is a bounded linear operator with  $\|R_n\|\le 
C\mu_Y(\tau=n)$, for some $C>0$.
\end{itemize}

By~\eqref{eq-tailFep}, $\mu_Y(\tau>n)\ll n^{-\alpha}$. This together with the 
definition
of $\hat R(\theta)$ and (A2) implies that $\|\hat R(\theta)-\hat R(0)\|\leq 
C|\theta|^\alpha$, for some $C>0$ (see, for instance,~\cite[Proposition 2.7]{MT12}). This 
together with (A1) implies that there exists $\delta>0$ and a $C^\alpha$ family
of eigenvalues $\lambda(\theta)$ well defined in $B_\delta(0)$ with $\lambda(0)=1$.

\begin{lemma}
\label{lemma-eigF} Given that (A1) and (A2) hold for the induced map $F$, we have that 
\begin{itemize}
\item[a)] $\lambda(\theta)=\int_Y e^{i\theta\tau}\, \dd \mu_Y+O(\theta^{2\alpha})$  as 
$\theta\to 0$;
\item[b)] Let $a_n\to\infty$.  Then for all $\theta$ such that  $\theta<a_n\delta$ (so, 
$\lambda(\theta a_n^{-1})$ is well defined) and for some $\sigma\in (0,1)$,
$$\E_{\mu_Y}(e^{ i\theta a_n^{-1} Z_n})
=\lambda(\theta a_n^{-1})^n(1+o(1))+O(\sigma^n).
$$
\end{itemize}
\end{lemma}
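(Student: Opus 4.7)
The plan is to apply standard Nagaev--Perron--Frobenius spectral perturbation theory, exploiting (A1)--(A2). As noted in the paragraph preceding the lemma, the bound $\|\hat R(\theta) - \hat R(0)\| \leq C|\theta|^\alpha$ together with the spectral gap in (A1) yields, for $|\theta| < \delta$, a spectral decomposition
\begin{equation*}
\hat R(\theta) = \lambda(\theta) P(\theta) + Q(\theta),
\end{equation*}
where $P(\theta)$ is the rank-one spectral projection onto the eigenspace of the leading eigenvalue $\lambda(\theta)$, with $P(\theta) Q(\theta) = Q(\theta) P(\theta) = 0$, and the spectral radius of $Q(\theta)$ is bounded by some $\sigma_0 < 1$ uniformly in $\theta$. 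The corresponding eigenfunction $e(\theta) \in \B$ may be normalized by $\int_Y e(\theta)\, \dd\mu_Y = 1$, so that $e(0) = 1$ (since $R1 = 1$) and $\|e(\theta) - 1\|_\B = O(|\theta|^\alpha)$; moreover $\|P(\theta) - P(0)\| = O(|\theta|^\alpha)$ where $P(0)v = \int_Y v\, \dd\mu_Y \cdot 1$.

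For part (a), I would integrate the eigenvalue relation $\hat R(\theta) e(\theta) = \lambda(\theta) e(\theta)$ against $\mu_Y$. Using the identity $\int_Y \hat R(\theta) v\, \dd\mu_Y = \int_Y R(e^{i\theta\tau} v)\, \dd\mu_Y = \int_Y e^{i\theta\tau} v\, \dd\mu_Y$ (the last step by $F$-invariance of $\mu_Y$) and the chosen normalization, one gets
\begin{equation*}
\lambda(\theta) = \int_Y e^{i\theta\tau} e(\theta)\, \dd\mu_Y = \int_Y e^{i\theta\tau}\, \dd\mu_Y + \int_Y e^{i\theta\tau}(e(\theta) - 1)\, \dd\mu_Y.
\end{equation*}
Since $\int_Y (e(\theta)-1)\, \dd\mu_Y = 0$, the error term rewrites as $\int_Y (e^{i\theta\tau}-1)(e(\theta)-1)\, \dd\mu_Y$. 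Bounding $|e(\theta)-1|_\infty \leq \|e(\theta)-1\|_\B = O(|\theta|^\alpha)$ and splitting the range of $\tau$ at $N \asymp 1/|\theta|$ using $\mu_Y(\tau > n) \ll n^{-\alpha}$ from \eqref{eq-tailFep} yields $\int_Y |e^{i\theta\tau}-1|\, \dd\mu_Y = O(|\theta|^\alpha)$. Multiplying the two bounds gives the desired $O(\theta^{2\alpha})$.

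For part (b), I would start from the basic identity
\begin{equation*}
\E_{\mu_Y}\bigl(e^{i\theta a_n^{-1} Z_n}\bigr) = \int_Y \hat R(\theta/a_n)^n 1\, \dd\mu_Y,
\end{equation*}
which follows by iterating $\int_Y \hat R(\theta) v \cdot w\, \dd\mu_Y = \int_Y e^{i\theta\tau} v \cdot w\circ F\, \dd\mu_Y$ with $w = 1$ and $v$ successively replaced by $\hat R(\theta/a_n)^j 1$. Applying the spectral decomposition with $\theta/a_n < \delta$,
\begin{equation*}
\int_Y \hat R(\theta/a_n)^n 1\, \dd\mu_Y = \lambda(\theta/a_n)^n \int_Y P(\theta/a_n) 1\, \dd\mu_Y + \int_Y Q(\theta/a_n)^n 1\, \dd\mu_Y.
\end{equation*}
Picking any $\sigma \in (\sigma_0, 1)$, the remainder is $O(\sigma^n)$ uniformly by the spectral radius bound on $Q(\theta/a_n)$. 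Finally, $\int_Y P(\theta/a_n) 1\, \dd\mu_Y \to \int_Y P(0) 1\, \dd\mu_Y = 1$ as $n\to\infty$ since $\theta/a_n \to 0$ and $\|P(\theta) - P(0)\| = O(|\theta|^\alpha)$, yielding the stated $(1 + o(1))$ prefactor.

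The main obstacle is the sharper exponent $2\alpha$ in (a): a direct triangle inequality combined with $\|e(\theta) - 1\|_\B = O(|\theta|^\alpha)$ only yields $O(\theta^\alpha)$. The gain of a further factor $|\theta|^\alpha$ crucially exploits the zero-mean cancellation $\int_Y (e(\theta)-1)\, \dd\mu_Y = 0$ from the normalization, combined with the quantitative tail estimate on $\tau$. Part (b) is then purely formal manipulation of the spectral decomposition.
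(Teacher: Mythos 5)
Your proof is correct and reconstructs exactly the spectral-perturbation argument that the paper delegates to \cite[Proof of Lemma A.4]{MT13} for part~a) and \cite[Proof of Theorem 6.1]{AaronsonDenker01} for part~b). In particular you correctly identify the crux of a): with $e(\theta)$ normalised so that $\int_Y e(\theta)\,\dd\mu_Y=1$, the naive bound would only give $O(\theta^\alpha)$, and it is precisely the cancellation $\int_Y(e(\theta)-1)\,\dd\mu_Y=0$, combined with the $\alpha$-tail estimate $\int_Y|e^{\ii\theta\tau}-1|\,\dd\mu_Y=O(|\theta|^\alpha)$ coming from $\mu_Y(\tau>n)\ll n^{-\alpha}$, that yields the stated $O(\theta^{2\alpha})$; part~b) then follows by the standard decomposition $\hat R(\theta)^n=\lambda(\theta)^nP(\theta)+N(\theta)^n$ with uniform exponential decay of the complementary part.
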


\begin{proof} Given (A1) and that $\hat R(\theta)$ is $C^\alpha$, we have: item a) is 
contained in, for instance,~\cite[Proof of Lemma A.4]{MT13};
item b) follows as in~\cite[Proof of Theorem 6.1]{AaronsonDenker01}.~\end{proof}

\subsection{The Darling--Kac law along subsequences}
\label{subsec-rediid}


As already mentioned in the introductory paragraph of the present section, here we phrase 
our results Propositions~\ref{prop-DKniid} and~\ref{prop-avop} and in terms of 
example~\eqref{eq-fepsdet}, but, as obvious from the corresponding proofs, the same 
arguments apply to  dynamical systems with infinite measure satisfying  
tail condition~\eqref{eq-dgp-distf} along with properties (A1) and (A2)  above (which 
could hold in a different function space $\B$). Proposition~\ref{prop-DKniid}
gives a Darling--Kac law along subsequences for such non iid systems and all involved 
notions have been clarified in previous sections. Proposition~\ref{prop-avop} gives 
\emph{uniform dual ergodicity along subsequences} and we clarify this terminology below.

Let $(X, \mu)$ be an infinite measure space and $T:X\to X$ be a conservative
measure preserving transformation with transfer operator  
$L: L^1(\mu)\to L^1(\mu)$, 
$\int_{X} L^n v \cdot w\, \dd \mu=\int_{X}  v \cdot w\!\circ\! f^n\, \dd \mu$,
for all $v\in L^1(\mu)$, $w\in L^\infty(\mu)$ and $n\geq 1$. 
The transformation $T$ is \emph{pointwise dual ergodic} if there exists a positive
sequence $a_n$ such that $a_n^{-1} \sum_{j=0}^n L^j v\to \int_ X v\,  \dd\mu$ a.e. as 
$n\to\infty$, for all  $v\in L^1 (\mu)$. If, furthermore, there exists $Y\subset X$ with 
$\mu(Y)\in (0,\infty)$ such that
$a_n^{-1} \sum_{j=0}^n L^j 1_Y\to \mu(Y)$, uniformly on $Y$ , then $Y$ is referred to as 
a \emph{Darling--Kac set} (see~\cite{Aaronson} for further background) and we refer to 
$T$ as \emph{uniformly dual ergodic}.  It is still an open question whether every 
pointwise dual ergodic transformation has a Darling--Kac set. However, it is desirable
to prove pointwise dual ergodicity by identifying Darling--Kac sets, as this facilitates 
the proof of several strong properties for  $T$; in particular, the existence of a 
Darling--Kac set along regular variation for
the return time to this set implies that $T$ satisfies a Darling--Kac law  
(see~\cite{Aaronson, ThalerZweimuller06} and references therein).
Furthermore, in the presence of regular variation of the return time to `good' sets, 
Melbourne and Terhesiu~\cite{MT13} have obtained uniformly dual ergodic theorems with 
remainders (in some cases, optimal remainders).

When regular variation is violated is still possible to obtain \emph{uniform dual 
ergodicity along subsequences} (and thus, pointwise dual ergodicity along subsequences);
this is the content of  Proposition~\ref{prop-avop} and the identification of the allowed 
class of subsequences is, of course, the main novelty. 
We do not know whether a Darling--Kac law along subsequences  can be derived directly 
from uniform dual ergodicity along subsequences; similarly to the regularly varying case, 
this would require exploiting the method of moments
and our methods are not applicable.

Throughout the rest of this section, we let $f=f_\eps$,  $F=F_\eps=f^\tau$  and recall 
that $\mu_Y$ and $\mu$ are $F$ and $f$, respectively,
invariant. We recall from  Subsection~\ref{subsec-ind} that $\tau$ satisfies 
\eqref{eq-dgp-distf} with $\ell \equiv 1$
and using the same notation, we let $k_n = \lfloor c^n \rfloor$ and set 
$A_{k_n} = k_n^{1/\alpha}$.

Using Lemma~\ref{lemma-eigF}, in this paragraph we clarify that $\tau$ is in the domain 
of geometric partial attraction of  a semistable law.
As a consequence,  the conclusion of Theorem~\ref{thm-merge} holds  for $f$, which we 
restate below in full generality. 
\begin{prop}
\label{prop-DKniid}
\begin{itemize}
\item[(i)]  There exists a semistable random variable $V$ (as defined 
in~\eqref{eq:semistable-chf-df}) such that for any $x>0$
and for any probability measure $\nu_Y \ll \mu_Y$, 
\[
\lim_{n \to \infty} \nu_Y \left(A_{k_n}^{-1} Z_{k_n}\le x\right)
= \P \left( V\le x\right).
\]
Moreover, given $\gamma(\cdot)$ as in~\eqref{eq:def-gamma},
\[
\lim_{r \to \infty} \nu_Y\left(A_{n_r}^{-1} Z_{n_r}\le x\right)
=\P \left(V_\lambda\le x\right),
\]
whenever $\gamma({n_r})\stackrel{cir}{\to} \lambda$, where  $V_\lambda$ is a semistable 
random variable as defined in~\eqref{eq:semistable-chf-df-lambda}.
\item[(ii)] Let $S_n = S_n(1_Y) = \sum_{j=0}^{n-1} 1_Y \!\circ\! f^j$. Suppose that 
$\gamma(a_{n_r}) \stackrel{cir}{\to} \lambda \in (c^{-1}, 1]$. Then for any $x > 0$, and 
for any probability measure $\nu\ll\mu$,
\begin{equation*} 
\lim_{r \to \infty} \nu ( S_{n_r}(v)  / a_{n_r} \leq x ) = 
\P \left( (V_{h_\lambda(x)} )^{-\alpha} \leq x \right)
=H_\lambda(x),
\end{equation*}
where $h_\lambda(x) = \frac{\lambda x}{c^{\lceil \log_c (\lambda x) \rceil }}$.
More generally, 
\[
\lim_{n \to \infty} \sup_{x > 0} \left| \nu ( S_n  \geq a_n x) -  
\P ( V_{\gamma(a_n x)} \leq x^{-1/\alpha}) \right| = 0.
\]

Moreover, the asymptotic behaviour of the distribution $H_\lambda$ at $\infty$ and $0$ 
are as given in Lemma~\ref{lemma:tail-est} and Theorem~\ref{thm:Hat0}, respectively.
\end{itemize}
\end{prop}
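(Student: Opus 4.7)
The plan is to establish item (i) using the Aaronson--Denker spectral machinery encoded in Lemma~\ref{lemma-eigF}, and then derive item (ii) from (i) via the duality argument already used in the proof of Theorem~\ref{thm-merge}. The asymptotic behaviour of $H_\lambda$ at $0$ and $\infty$ is exactly the one identified in Lemma~\ref{lemma:tail-est} and Theorem~\ref{thm:Hat0}, so nothing new is needed for the last assertion.

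For item (i), I would first combine parts (a) and (b) of Lemma~\ref{lemma-eigF}. For fixed $\theta \in \R$, write
\[
\E_{\mu_Y}(e^{\ii\theta A_{k_n}^{-1} Z_{k_n}})
= \lambda(\theta A_{k_n}^{-1})^{k_n}(1+o(1)) + O(\sigma^{k_n}),
\qquad
\lambda(\theta A_{k_n}^{-1})
= \int_Y e^{\ii\theta \tau/A_{k_n}}\,\dd\mu_Y + O\bigl(A_{k_n}^{-2\alpha}\bigr).
\]
Since $A_{k_n}\sim k_n^{1/\alpha}$, the error satisfies $k_n A_{k_n}^{-2\alpha}=O(k_n^{-1})\to 0$, so
\[
\lambda(\theta A_{k_n}^{-1})^{k_n}
= \Bigl(\int_Y e^{\ii\theta\tau/A_{k_n}}\,\dd\mu_Y\Bigr)^{k_n}(1+o(1)).
\]
The right-hand side is the characteristic function of an iid sum of $k_n$ copies of $\tau/A_{k_n}$ under $\mu_Y$. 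By the tail expansion~\eqref{eq-tailFep}, which is exactly~\eqref{eq-dgp-distf} for the law of $\tau$ with the $M$ and $k_n$ of Subsection~\ref{subsec-contM}, the iid convergence recalled in Section~\ref{subsec-DGPA} yields the limit $\E(e^{\ii\theta V})$. The same computation along $n_r$, coupled with the subsequential convergence~\eqref{eq-conv-subseq}, produces the $V_\lambda$ limit whenever $\gamma(n_r)\stackrel{cir}{\to}\lambda$. Finally, to pass from $\mu_Y$ to an arbitrary $\nu_Y\ll\mu_Y$, I would approximate the density $\dd\nu_Y/\dd\mu_Y$ in $L^1(\mu_Y)$ by elements of the Gibbs--Markov space $\B$ and use the uniform boundedness of $\hat R(\theta)^n$ on $L^1$, in the style of \cite[Thm.~6.1]{AaronsonDenker01}.

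For item (ii), the duality identity $S_n\geq m \Longleftrightarrow Z_{m-1}\leq n-1$ of~\eqref{eq-duality} continues to hold in the present dynamical setting, because it follows purely from the definitions of $S_n$ as the number of visits to $Y$ and of $Z_m$ as the time of the $m$-th such visit; it does not use the iid property of the increments of $Z$. Feeding item (i) into the computation of the proof of Theorem~\ref{thm-merge} and invoking the $C^\infty$ regularity of $G_\lambda$, I obtain
\[
\nu_Y(S_n \geq a_n x) \sim \nu_Y\bigl(V_{\gamma(a_n x)}\leq x^{-1/\alpha}\bigr)
\]
uniformly on compact subsets of $(0,\infty)$, and the uniformity extends to all of $(0,\infty)$ by the same $x\downarrow 0$ and $x\to\infty$ monotonicity argument used there. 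The limit statement along $\gamma(a_{n_r})\stackrel{cir}{\to}\lambda$ then follows from Lemma~\ref{lemma-gamma}. Passing from probabilities on the base $Y$ to probabilities $\nu\ll\mu$ on the whole space $X$ is achieved by the Hopf--Zweim\"uller principle of strong distributional convergence on conservative ergodic infinite measure preserving systems, which lifts the distributional limit from any $\nu_Y\ll\mu_Y$ to any $\nu\ll\mu$.

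The main obstacle is precisely this last lifting step: controlling distributional convergence under an arbitrary $\nu\ll\mu$ on the infinite measure space $(X,\mu)$, given only the spectral information on $\hat R(\theta)$ that produces convergence under the finite reference measure $\mu_Y$. Once this is handled within the Gibbs--Markov/Zweim\"uller framework, everything else is an application of Lemma~\ref{lemma-eigF} together with a verbatim repetition of the duality computation already carried out for Theorem~\ref{thm-merge}.
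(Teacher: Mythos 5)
Your proposal is correct and reaches the same conclusion as the paper, but part (i) is handled by a genuinely different route, which is worth noting. The paper uses Lemma~\ref{lemma-eigF}(a) together with Corollary 1 of \cite{PK2} to rewrite $1-\lambda(\theta)$ as $h(1/2)(1-\E_m(e^{\ii\theta\tilde\tau}))(1+o(1))$ for the first-return time $\tilde\tau$ of the \emph{piecewise linear} map $T_\eps$ of Subsection~\ref{subsec-contM}; feeding this into Lemma~\ref{lemma-eigF}(b) yields $\E_{\mu_Y}(e^{\ii\theta a_n^{-1}Z_n})=[\E_m(e^{\ii\theta a_n^{-1}\tilde Z_n})]^{h(1/2)}(1+o(1))+O(\sigma^n)$, so the limit law appears as the $h(1/2)$-th convolution power of the iid semistable limit for $\tilde Z_n$, and then the statement is transferred to general $\nu_Y\ll\mu_Y$ and $\nu\ll\mu$ by \cite[Proposition~4.1]{ThalerZweimuller06}. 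You instead stay internal: you compare $\lambda(\theta A_{k_n}^{-1})$ with $\int_Y e^{\ii\theta\tau/A_{k_n}}\,\dd\mu_Y$ directly via Lemma~\ref{lemma-eigF}(a), check that the $O(A_{k_n}^{-2\alpha})$ error is $o(k_n^{-1})$, and then read off $(\int_Y e^{\ii\theta\tau/A_{k_n}}\,\dd\mu_Y)^{k_n}$ as the characteristic function of an iid array whose marginal is the law of $\tau$ under $\mu_Y$ and whose tail satisfies \eqref{eq-dgp-distf} by \eqref{eq-tailFep}, so Section~\ref{subsec-DGPA} gives the semistable limit directly. These are consistent --- the $h(1/2)$ factor is absorbed into the $M$ of Subsection~\ref{subsec-ind}, so your limit law is the same as the paper's convolution power --- but your route avoids the explicit comparison with $T_\eps$ and the excursion through Laplace/characteristic-function asymptotics of \cite{PK2}. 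Two small remarks: the $M$ you should invoke is the one of Subsection~\ref{subsec-ind} (which carries the $h(1/2)$ prefactor), not the one of Subsection~\ref{subsec-contM}; and for the passage to $\nu_Y\ll\mu_Y$ your spectral/$L^1$-approximation argument is fine, but note that the paper simply cites \cite[Proposition~4.1]{ThalerZweimuller06}, which is the Zweim\"uller strong-distributional-convergence result you already invoke (under the name ``Hopf--Zweim\"uller principle'') to lift from $\nu_Y\ll\mu_Y$ to $\nu\ll\mu$; the same reference closes the one step you flag as an obstacle, so there is no real gap. Part (ii) and the final assertion about $H_\lambda$ are handled exactly as in the paper.
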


We remark that (ii) holds true for 
$S_n(v) = \sum_{j=0}^{n-1} v \! \circ \! f^j$ 
for any $v \in L^1(\mu)$ such that $\int v \, \dd \mu \neq 0$. Indeed,
write
\[
\frac{S_n(v)}{a_{n}} = \frac{S_n(v)}{S_n} \frac{S_n}{a_n},
\]
and note that by Hopf's ratio ergodic theorem
(see, for instance,~\cite[Ch.2]{Aaronson} and~\cite[Section 
5]{ThalerZweimuller06}; see also~\cite{Zweimuller04} for a 
short proof of this theorem) the first factor converges a.s.~as $n \to \infty$ to
$\int v \dd \mu / \mu(Y)$.

\begin{proof}{\bf (i)}
Recall the map $T:=T_\eps$ defined by~\eqref{eq-firstW} in Subsection~\ref{subsec-contM}; 
as noted there, $T$ is isomorphic to a renewal shift .
Let $\tilde\tau$ be the first return time of $T$ to $Y=[1/2,1]$ and set 
$T_Y=T^{\tilde\tau}$. Let $m$ be the normalised Lebesgue measure on $Y$ and note that 
by~\eqref{eq-tailFep},
\begin{equation}
\label{eq-tauFiid}
\mu_Y(\tau>n)=h(1/2)m(\tilde\tau>n)(1+o(1)).
\end{equation}
Lemma~\ref{lemma-eigF} a), \eqref{eq-tauFiid}, and Corollary 1 in \cite{PK2} 
(it remains true for characteristic functions)
imply that as $\theta \to 0$
\begin{equation}
\label{eq-rediid}
1-\lambda(\theta)=h(1/2)\int_Y (1-e^{i\theta\tilde\tau})\, \dd m(1+o(1))=
h(1/2)(1-\E_m( e^{i\theta\tilde\tau}))(1+o(1)).
\end{equation}
Set $\tilde Z_n=\sum_{j=0}^{n-1}\tilde\tau \!\circ\! T_Y^j$.  Let $a_n\sim 
n^{1/\alpha}$ (so, 
$a_n$ satisfies~\eqref{eq:bn}). Formula \eqref{eq-rediid} together with 
Lemma~\ref{lemma-eigF} b) implies that
\begin{equation} \label{eq:charfc-compare}
\E_{\mu_Y}(e^{ i\theta a_n^{-1} Z_n})=
\left[ \E_m( e^{i\theta a_n^{-1}\tilde Z_n}) \right]^{h(1/2) }
(1+o(1))+O(\sigma^n),
\end{equation}
for some $\sigma\in (0,1)$.  
As in Subsection~\ref{subsec-contM}, 
$\lim_{n \to \infty} m\left(A_{k_n}^{-1}\tilde Z_{k_n}\le x\right)=
\P \left( \tilde V\le x\right)$
and, given $\gamma_x$ as in~\eqref{eq:def-gamma},
$\lim_{r \to \infty} m\left(A_{n_r}^{-1}\tilde Z_{n_r}\le x\right)=
\P \left(\tilde V_\lambda\le x\right)$, whenever 
$\gamma({n_r})\stackrel{cir}{\to} \lambda$. Therefore, the characteristic 
functions converge, thus by \eqref{eq:charfc-compare}
\[
\lim_{n \to \infty} \E_{\mu_Y} \left( e^{i \theta a_{k_n}^{-1} Z_{k_n}} \right)
= \left( \E e^{i \theta \tilde V} \right)^{h(1/2)},
\]
and similarly for $n_r$. We also see that the limit in the non-iid case is a 
convolution power of the limit in the iid case.
Thus (i) with $\nu_Y=\mu_Y$ follows. The statement for general $\nu_Y$ 
follows by~\cite[Proposition 4.1]{ThalerZweimuller06} (see also first sentence 
under Proposition 4.1 in ~\cite{ThalerZweimuller06} for further references).

{\bf (ii)} The statement for $\nu=\mu$ follows from item (i) and 
the duality argument used in the proof of Theorem~\ref{thm-merge}. The 
statement for $\nu\ll\mu_Y$ follows by~\cite[Proposition 
4.1]{ThalerZweimuller06}.
\end{proof}

\subsection{Asymptotic behaviour of the average transfer operator: uniform dual 
ergodicity along subsequences}
\label{subsec-tr}

Recall that $\mu, \mu_Y$  are $f$ and $F$, respectively, invariant.
Let $L:L^1(\mu)\to L^1(\mu)$ be the 
transfer operator associated with $f$.
Recall that $\B$ is the function space under which (A1) and (A2) hold
and that $\tau$ satisfies \eqref{eq-dgp-distf} with $\ell \equiv 1$
and $k_n = \lfloor c^n \rfloor$. We also recall the class $\mathcal{P}_{r,\rho}$ of 
log-periodic functions introduced
in~\eqref{eq:def-P} and let $C_p$ be the set of continuity points of 
$p\in\mathcal{P}_{r,\rho}$. Here we show that 
Theorem~\ref{prop-genav} applies to $f$ and, as a consequence obtain:

\begin{prop}
\label{prop-avop} There exists
$p\in\mathcal{P}_{r,\alpha}$ such that for any $z\in C_p$ and for any H{\"o}lder function 
 $v:[0,1]\to\R$, supported on a compact set of $(0,1]$,
\[
\lim_{n \to \infty}
\frac{\sum_{j=0}^{[ c^{n/\alpha}z]} L^j v}{ c^n}=z^\alpha\, 
p(z)\int_{[0,1]} v\,  \dd \mu,
\]
uniformly on compact sets of $(0,1]$.
\end{prop}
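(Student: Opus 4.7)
The plan is to apply Theorem~\ref{prop-genav} to the operators $T_n := 1_Y L^n 1_Y$ acting on the Gibbs--Markov space $\B$ of Subsection~\ref{subsec-spectral}, and then transfer the resulting operator-norm asymptotic for $\sum_{j\le N} T_j$ into the pointwise statement for $\sum_{j\le N} L^j v$, uniform on compacta of $(0,1]$. First I will record the operator renewal identity
\[
\hat T(e^{-s})\;=\;\sum_{n\ge 0} 1_Y L^n 1_Y\, e^{-sn}\;=\;\bigl(I-\hat R(e^{-s})\bigr)^{-1}
\]
on $\B$, where $\hat R(z)=\sum_{n\ge 1} R_n z^n$ and $R_n v = R(1_{\{\tau=n\}}v)$; this is just the first-return decomposition, made rigorous in $\B$ by (A2), and is the operator analogue of the scalar identity $\widehat U(s)=1/(1-\widehat F(s))$ used in the proof of Proposition~\ref{prop:renewal}.

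Next I will combine (A1), (A2), and the tail condition \eqref{eq-dgp-distf} for $\tau$ (with $\ell\equiv 1$) to establish the operator Karamata-type asymptotic
\[
R-\hat R(e^{-s})\;\sim\;s^\alpha\,q_0(s)\,P\qquad\text{as } s\downarrow 0
\]
in $\B$-operator norm, where $P$ is the rank-one spectral projection of $R$ at its simple dominant eigenvalue $1$ (so $Pv = h\int_Y v\,\dd m$) and $q_0 = \mathrm{A}_{1-\alpha}\mathrm{B}_{1-\alpha} M\in\mathcal{Q}_{c^{1/\alpha},\alpha}$ is the same log-periodic function appearing in \eqref{eq:Laplace-asy}. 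The scalar input is Corollary~1 of \cite{PK2} applied to $\mu_Y(\tau=n)$; (A2) lifts it termwise via $\|R_n\|_\B\leq C\mu_Y(\tau=n)$, and the spectral gap from (A1) lets me invert $I-\hat R(e^{-s})$ by a Neumann series on the one-dimensional eigenspace, to obtain
\[
\hat T(e^{-s})\;\sim\;\frac{1}{s^\alpha q_0(s)}\,P,
\]
which is precisely hypothesis \eqref{eq-hatT} with $\ell\equiv 1$. Theorem~\ref{prop-genav} then yields, with $p=\mathrm{A}_\alpha^{-1}(1/q_0)$, the $\B$-operator-norm asymptotic
\[
\sum_{j=0}^{\lfloor c^{n/\alpha}z\rfloor} 1_Y L^j 1_Y\;\sim\;c^n\,z^\alpha\,p(z)\,P,\qquad z\in C_p.
\]

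Finally I will transfer this to the pointwise statement. A H\"older $v$ supported on a compact $K\subset(0,1]$ is supported in $[\xi_m,1]$ for some finite $m$, so I decompose $v=\sum_{k=0}^{m-1} v_k$ with $v_k$ supported in the $k$th level $[\xi_{k+1},\xi_k]$; for $k\ge 1$, $L^j v_k = L^{j-k}(L^k v_k)$ with $L^k v_k\in\B$ supported on $Y$, and a shift of the summation index produces only an $O(1)$ correction negligible against $c^n$, reducing matters to $v\in\B$ supported on $Y$. To evaluate at $x$ varying in a compact $K'\subset (0,1]$, I pull $x$ back along the finitely many inverse branches of $f$ reaching $Y$ (for $x\in[\xi_{\ell+1},\xi_\ell]$, $f^\ell(x)\in Y$ with bounded $\ell$) and use the bounded Jacobian factors along those branches in the Thaler--Zweim\"uller style of \cite{ThalerZweimuller06,MT13}, upgrading $\|\cdot\|_\B$-convergence on $Y$ to uniform convergence on $K'$. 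Combined with the identification of $Pw$ as a multiple of $h\cdot\int v\,\dd\mu$, this yields the stated asymptotic.

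The hard part will be the second paragraph: promoting the scalar log-periodic Karamata estimate of \cite{PK2} to an \emph{operator}-valued asymptotic for $I-\hat R(e^{-s})$ with $P$ cleanly factored out and $q_0$ identified in $\mathcal{Q}_{c^{1/\alpha},\alpha}$ exactly as \eqref{eq-hatT} demands. The remaining steps are essentially bookkeeping, combining the Gibbs--Markov spectral theory of Subsection~\ref{subsec-spectral} with the Thaler-type pullback used in the regularly varying case.
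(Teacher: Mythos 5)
The overall skeleton is right (operator renewal identity, feed Lemma~\ref{lemma-hatT}-style asymptotics into Theorem~\ref{prop-genav}, then Thaler-type pullback), but your key step — establishing the operator asymptotic for $\hat T(e^{-s})$ — is based on a claim that is false and on a mechanism that doesn't deliver it. You assert
\[
R-\hat R(e^{-s})\sim s^\alpha q_0(s)\,P \quad\text{in $\B$-operator norm,}
\]
and propose to get it by ``lifting'' the scalar Karamata estimate termwise through $\|R_n\|_\B\leq C\mu_Y(\tau=n)$ plus a ``Neumann series on the eigenspace''. The bound from (A2) is a one-sided size estimate; it does \emph{not} say that $R_n$ is asymptotically a multiple of $P$, and indeed it isn't: $R_n$ has full rank and its component orthogonal to the eigenspace contributes to $R-\hat R(e^{-s})$ a term of the same order $O(s^\alpha)$, so the displayed relation cannot hold in operator norm (the remainder is $O(s^\alpha)$, not $o(s^\alpha)$). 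What \emph{is} true, and what the paper proves in Lemma~\ref{lemma-hatT}, is only the asymptotic for the \emph{resolvent} $\hat T(e^{-s})=(I-\hat R(e^{-s}))^{-1}$: the divergent factor $(1-\lambda(s))^{-1}$ sits exclusively in the rank-one piece, so after the decomposition
\[
\hat T(s)=(1-\lambda(s))^{-1}P+(1-\lambda(s))^{-1}\bigl(P(s)-P\bigr)+(I-\hat R(s))^{-1}Q(s)
\]
(from \cite[Proposition 2.9]{MT12}, using the $C^\alpha$-regularity of $s\mapsto\hat R(s)$ supplied by (A2) and the simple isolated eigenvalue from (A1)) the last two summands are $o\bigl((1-\lambda(s))^{-1}\bigr)$, yielding $\hat T(s)\sim(1-\lambda(s))^{-1}P$.

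The second, related, gap is that you never establish the crucial scalar link $1-\lambda(s)\sim s^\alpha q_0(s)$. This is not a formal consequence of the tail asymptotic of $\mu_Y(\tau>n)$; one must first relate the perturbed eigenvalue $\lambda(s)$ to the Laplace transform $\int_Y e^{-s\tau}\,\dd\mu_Y$, which is the content of Lemma~\ref{lemma-eigF}(a) (giving $1-\lambda(s)=\int_Y(1-e^{-s\tau})\,\dd\mu_Y+O(s^{2\alpha})$), and only then apply \cite[Corollary 1]{PK2} to the Laplace transform. Your proposal skips this eigenvalue-expansion step entirely, so the identification of $q_0=\mathrm{A}_{1-\alpha}\mathrm{B}_{1-\alpha}M$ inside the operator asymptotic is left unjustified. (A smaller point: the leading eigenprojection of the $\mu_Y$-transfer operator $R$ is $Pv=\int_Y v\,\dd\mu_Y\cdot 1$, a constant, not $h\int_Y v\,\dd m$ — the latter is the projection for the Lebesgue transfer operator; this needs to be tracked consistently.) The final Thaler-type pullback paragraph is in the right spirit and matches what the paper does by citing \cite{MT13}, so once the operator asymptotic is repaired via the spectral decomposition the rest of your plan goes through.
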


To show that Proposition~\ref{prop-avop} follows from Theorem~\ref{prop-genav} and 
Lemma~\ref{lemma-hatT} below (which verifies the assumption of Theorem~\ref{prop-genav})
we recall  the language of operator renewal sequences, introduced
in the context of finite measure  dynamical systems
by Sarig~\cite{Sarig02} and Gou{\"e}zel~\cite{Gouezel04} and exploited in in the context 
of infinite measure  dynamical systems
by Melbourne and Terhesiu~\cite{MT12, MT13} and Gou{\"e}zel~\cite{Gouezel11}. The proof 
of  Proposition~\ref{prop-avop} is provided at the end of this subsection.

Recall the notation used in Subsection~\ref{subsec-spectral}:
$R_n v=R(1_{\{\tau=n\}}v)$, $n\ge 1$ and define the operator sequences
\begin{equation*}
T_n v=1_YL^n (1_Y v),\enspace n\ge 1, \ T_0=I.
\end{equation*}
We note that $T_n$ corresponds to general returns to $Y$ and $R_n$ corresponds to first 
returns to $Y$.   The relationship $T_n=\sum_{j=1}^n T_{n-j}R_j$
generalises the renewal equation for scalar renewal sequences 
(see~\cite{Feller, BGT} and references therein).

For $s>0$, define the operator power series
$\hat T(e^{-s}),\hat R(e^{-s}):\B\to\B$ by
\begin{equation}\label{eq:series}
\hat T(e^{-s})=\sum_{n\ge 0} T_n e^{-sn}, \qquad \hat R(e^{-s})=\sum_{n\ge 1} R_n e^{-sn}
\end{equation}
Working with $e^{-s}$, instead of $i\theta$ in Subsection~\ref{subsec-spectral},
we have $\hat R(e^{-s})v=R(e^{-s\tau} v)$. The relationship $T_n=\sum_{j=1}^n T_{n-j}R_j$ 
together with~\eqref{eq:series}
implies that for all $s>0$,
$$
\hat T(e^{-s})=(I-\hat R(e^{-s}))^{-1}.
$$
We note that under (A1) and  (A2), $(I-\hat R(e^{-s}))^{-1}$ is well defined for $s$ in a 
neighbourhood of $0$.

The next result below gives the asymptotic of $\hat T(e^{-s})$, as $s\to 0$, as 
required 
for the application of Theorem~\ref{prop-genav}. We recall from~\eqref{eq-tailFep} that
$\mu_Y(\tau>n)=n^{-\alpha} M(n)(1+o(1))$, where $M(x) =h(1/2)\frac 12 (1 + 2\eps \sin( 
\frac{2\pi \alpha \log x}{\log c}))$.

\begin{lemma}
\label{lemma-hatT}
For $\rho>0$, let $\mathrm{A}_\rho, \mathrm{B}_\rho$ be the operators introduced 
in~\eqref{eq:defA} and ~\eqref{eq:defB}.
Set $q_{0}:=\mathrm{A}_{1-\alpha}(\mathrm{B}_{1-\alpha}M)$. Define $P:\B\to\B$ by 
$P v\equiv \int_Y v\, \dd \mu_Y$.
Then,
$$
\hat T(e^{-s})\sim \frac{1}{s^\alpha q_{0}(s)}P\text{ as } s\to 0,
$$
\end{lemma}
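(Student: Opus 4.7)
The plan is to combine the operator renewal identity $\hat T(e^{-s}) = (I - \hat R(e^{-s}))^{-1}$ with spectral perturbation theory for $\hat R(e^{-s})$ near $s = 0$, and then extract the asymptotics of the leading scalar factor $1 - \lambda(s)$ from the regularly log-periodic Tauberian results in \cite{PK2}. The ingredients (A1) (spectral gap of $R$) and (A2) (norm bounds on $R_n$) are precisely what is needed to set the perturbation machinery in motion, and property~\eqref{eq-tailFep} together with Corollary~1 in \cite{PK2} will provide the scalar asymptotic.

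First, I would verify the norm continuity bound $\|\hat R(e^{-s}) - R\|_{\B} = O(s^{\alpha})$ as $s \downarrow 0$. Using (A2),
\[
\|\hat R(e^{-s}) - R\|_{\B} \leq \sum_{n \geq 1} \|R_n\| (1 - e^{-sn}) \leq C \int_Y (1 - e^{-s\tau}) \, \dd \mu_Y,
\]
and the integral is $O(s^\alpha)$ because $\mu_Y(\tau>n) \sim n^{-\alpha} M(n)$ with $M$ bounded. Combined with the simple, isolated eigenvalue at $1$ given by (A1), standard perturbation theory produces a neighbourhood $U$ of $0$ on which $\hat R(e^{-s})$ has a simple leading eigenvalue $\lambda(s)$ with spectral projection $\Pi(s)$ depending continuously on $s$ and satisfying $\lambda(0)=1$ and $\Pi(0)v = \bigl(\int_Y v \, \dd \mu_Y\bigr) \cdot 1 = Pv$. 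Writing $\hat R(e^{-s}) = \lambda(s) \Pi(s) + N(s)$ with $N(s)\Pi(s) = \Pi(s)N(s) = 0$ and uniformly $\|N(s)^n\| \leq C \sigma^n$ for some $\sigma \in (0,1)$ and all $s \in U$, inversion yields
\[
\hat T(e^{-s}) = \frac{1}{1 - \lambda(s)}\, \Pi(s) + \bigl(I - N(s)\bigr)^{-1}(I - \Pi(s)),
\]
in which the second summand is uniformly bounded while the first blows up as $s \downarrow 0$.

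Second, I would identify the scalar asymptotic of $1 - \lambda(s)$. Paralleling Lemma~\ref{lemma-eigF}(a) with $\theta = \ii s$ (so that $e^{\ii(\ii s)\tau} = e^{-s\tau}$), one obtains $1 - \lambda(s) = \int_Y (1 - e^{-s\tau}) \, \dd \mu_Y + O(s^{2\alpha})$ as $s \downarrow 0$. By Corollary~1 in \cite{PK2} applied to the Laplace--Stieltjes transform of the tail with regularly log-periodic $M$,
\[
\int_Y (1 - e^{-s\tau}) \, \dd \mu_Y \sim s^\alpha q_0(s), \qquad q_0 = \mathrm{A}_{1-\alpha}\mathrm{B}_{1-\alpha} M \in \mathcal{Q}_{c^{1/\alpha}, \alpha}.
\]
Since $q_0$ is bounded below away from $0$ on log-periods (because $M$ is), the error $O(s^{2\alpha})$ is of lower order than $s^\alpha q_0(s)$, giving $1 - \lambda(s) \sim s^\alpha q_0(s)$. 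Substituting into the spectral decomposition and using $\Pi(s) \to P$ in norm, the rank-one term dominates, yielding $\hat T(e^{-s}) \sim (s^\alpha q_0(s))^{-1} P$.

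The main obstacle is the scalar step: one must check that Corollary~1 of \cite{PK2}, stated there for the appropriate transform, applies cleanly to our $\int_Y (1 - e^{-s\tau}) \, \dd \mu_Y$ with the log-periodic $M$ produced in \eqref{eq-tailFep}, and that the perturbative error $O(s^{2\alpha})$ is genuinely dominated by $s^\alpha q_0(s)$ (this uses the boundedness from below of $q_0$, equivalently of $M$, which holds for the $\eps$ chosen small in Subsection~\ref{subsec-contM}). A secondary technical point is the transfer of the perturbation family $\lambda(\theta)$ from the complex-parameter formulation of Lemma~\ref{lemma-eigF} to the real parameter $s > 0$; this is immediate from Kato's continuous perturbation theory once the norm-continuity bound $\|\hat R(e^{-s}) - R\|_{\B} = O(s^\alpha)$ is in hand.
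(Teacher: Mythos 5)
Your proposal follows essentially the same route as the paper's proof: the operator renewal identity $\hat T(e^{-s})=(I-\hat R(e^{-s}))^{-1}$, the norm bound $\|\hat R(e^{-s})-R\|=O(s^\alpha)$ from (A2), spectral perturbation theory around the simple eigenvalue at $1$ from (A1) (your rank-one-plus-remainder split is equivalent to the three-term decomposition from \cite[Proposition~2.9]{MT12} used in the paper), and the scalar asymptotic $1-\lambda(s)\sim s^\alpha q_0(s)$ from Corollary~1 of \cite{PK2} with the $O(s^{2\alpha})$ correction absorbed because $q_0$ is bounded below. The only cosmetic difference is that you work directly with $\tau$ under $\mu_Y$, while the paper detours via the linear-surrogate return time $\tilde\tau$ under Lebesgue measure and the density value $h(1/2)$; both are consistent once one observes that the $M$ defining $q_0$ already contains the $h(1/2)$ factor.
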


\begin{proof} For simplicity we write $\hat R(s), \hat T(s)$ instead of $\hat R(e^{-s})$, 
$\hat T(e^{-s})$. As in  
Subsection~\ref{subsec-spectral} (with $s$ instead of $i\theta$), by (A2) we have
$\|\hat R(s)-\hat R(0)\|\leq Cs^\alpha$, for some $C>0$. This 
together with (A1) implies that there exist $\delta>0$ and a $C^\alpha$ family
of eigenvalues $\lambda(s)$  well defined in $B_\delta(0)$ with $\lambda(0)=1$. Let 
$P(s):\B\to\B$ be the family of 
spectral projections associated with $\lambda(s)$,  with $P(0)=P$.
Let $Q(s)=I-P(s)$ be the family of complementary spectral projections. Since 
$\hat R(s)$ is $C^\alpha$, the same holds for $P(s)$ and $Q(s)$.

We recall the following decomposition of $\hat T(s)$ for $s\in B_\delta(0)$ from 
\cite[Proposition 2.9]{MT12} (extensively used in~\cite{MT12, MT13}):
$$
\hat T(s)=(1-\lambda(s))^{-1}P+(1-\lambda(s))^{-1}(P(s)-P)+(I-\hat R(s))^{-1}Q(s).
$$
By definition, $\|(I-\hat R(s))^{-1}Q(s)\|=O(1)$, as $s\to 0$.
By the argument used in obtaining~\eqref{eq-rediid}(with $s$ instead of $i\theta$)
$$
 1-\lambda(s)=\int_Y(1- e^{-s\tilde\tau})\, \dd m+O(s^{2\alpha}).
$$
It follows from~\cite[Corollary 1]{PK2} (see also~\eqref{eq:Laplace-asy} here) that
$\int_Y(1- e^{-s\tilde\tau})\, \dd m\sim s^{\alpha} q_{0}(s)$. Thus,
\begin{equation*}
(1-\lambda(s))^{-1}\sim \frac{1}{s^\alpha q_{0}(s)}.
\end{equation*}
We already know that the families $P(s)$ and $Q(s)$ are $C^\alpha$. Putting the above 
together, $(I-\hat R(s))^{-1}=s^{-\alpha} q_{0}(s)^{-1}P +E(s)$, where 
$\|E(s)\|=o(s^{-\alpha} q_{0}(s)^{-1})$ and the conclusion follows.
\end{proof}

\begin{pfof}{Proposition~\ref{prop-avop}}Let $v\in \B$. Let $p=A_\alpha^{-1}(1/q_0)$ with 
$A_\alpha$ and $q_0$ given in Lemma~\ref{lemma-hatT} and $z$ a continuity point of $p$. 
It follows from Theorem~\ref{prop-genav} and Lemma~\ref{lemma-hatT} that
\begin{equation} 
\label{eq-unif}
\lim_{n \to \infty}
\frac{\sum_{j=0}^{[ c^{n/\alpha}z]} T_j v}{ c^n}=z^\alpha\, 
p(z)\int_Y v \dd \mu_Y,
\end{equation}
uniformly  on $Y$.
The statement for  H{\"o}lder observables $v: [0,1]\to\R$ supported on any compact set of 
$(0,1]$ follows from~\eqref{eq-unif} together with a word by word repeat of the argument 
used in~\cite[Proof of Theorem 3.6 and first part of Proof of Theorem 
1.1]{MT13}.~\end{pfof}

\section{Appendix}

\subsection{On the discrete form of  (\ref{eq-dgp-distf})}
\label{sec-discr}

Let us assume that the discrete version of (\ref{eq-dgp-distf}) holds, i.e.
\begin{equation*} \label{eq-dgp-distf-int}
\overline F(n) = \frac{\ell(n)}{n^\alpha}  [ M(\delta(n)) + h(n) ],
\end{equation*}
where $\ell : \N \to (0,\infty)$ is a slowly varying sequence, and 
$h: \N \to \R$ is right-continuous error function such that
$\lim_{n \to \infty} h( \lfloor A_{k_n} x \rfloor ) = 0$, whenever $x$ is a continuity 
point of $M$. It is possible to extend the functions $\ell$ and $h$ (still denoted by 
$\ell$ and $h$), such that 
(\ref{eq-dgp-distf}) holds. Indeed, let
\[
\ell(x) = \frac{\ell(\lfloor x \rfloor) x^\alpha}{\lfloor x \rfloor ^\alpha}, \quad
h(x) = h(\lfloor x \rfloor) + M(\delta(\lfloor x \rfloor)) - M(\delta(x)).
\]
Then 
\[
\overline F(x) = \frac{\ell(x)}{x^\alpha}  [ M(\delta(x)) + h(x) ]
= \overline F(\lfloor x \rfloor).
\]
Clearly, $\ell$ is a slowly varying function, so we only have to show that $h$ 
satisfies the conditions after (\ref{eq-dgp-distf}). Let $x$ be a continuity point of 
$M$, and assume that $x \in (1, c^{1/\alpha})$. The general case follows similarly. By 
the definition
\[
h(A_{k_n} x ) = h( \lfloor A_{k_n} x \rfloor) + [M(\delta(\lfloor A_{k_n} x 
\rfloor)) - M(\delta( A_{k_n} x))],
\]
so according to assumption on $h$, it is enough to show that the term in the square 
brackets tends to 0. As $A_{k_{n+1}} / A_{k_n} \to c^{1/\alpha}$, we have for $n$ large 
enough, $\delta ( A_{k_n} x ) = x$, and $\delta ( \lfloor A_{k_n} x \rfloor) =  \lfloor 
A_{k_n} x \rfloor / A_{k_n} \to x$. Since $x$ is a continuity point of $M$, the 
statement follows.

\subsection{Proof of Lemma \ref{lemma:uniform}}
\label{subsec-lemmaunif}

Introduce the notation
\[
\begin{split}
\nu_\lambda( x) & = 1 - \frac{R_\lambda(x)}{R_\lambda(1)} 
= 1- x^{-\alpha} \frac{M(x \lambda^{1/\alpha})}{M(\lambda^{1/\alpha})},
\quad x \geq 1. 
\end{split}
\]
Then $\nu_\lambda$ is a distribution function. Consider the decomposition
\[
G_\lambda(x) = G_{\lambda, 1}(x) * G_{\lambda, 2}(x),
\]
where
\[
G_{\lambda, 1}(x) = e^{-s} \sum_{n=0}^\infty \frac{s^n}{n!} \nu_\lambda^{*n}(x), 
\]
with $s= - R_\lambda(1)$, where $*$ stands for convolution, and $*n$ for $n$th 
convolution power. Simply
\[
\int_0^\infty e^{-u x} \dd G_{\lambda, 1}(x) = 
\exp \left\{ - \int_1^\infty \left( 1 - e^{-u y} \right) \dd R_\lambda(y) \right\} .
\]
Since uniformly in $\lambda$
\[
\overline G_{\lambda, 2} (x) = o(e^{-x}) \quad \text{as } x \to \infty,  
\]
we have that uniformly in $\lambda \in [c^{-1},1]$
\[
\overline G_{\lambda} (x) \sim \overline  G_{\lambda, 1} (x). 
\]
Therefore, to prove the statement we have to show that
\[
\overline  G_{\lambda, 1} (x) \sim  \frac{M ( x \lambda^{1/\alpha} )}{x^\alpha}
\]
holds uniformly in $\lambda \in [c^{-1},1]$.
From the proof of implication (ii) $\Rightarrow$ (iii) of Theorem 3 in \cite{EGV} we see 
that it is enough to show that subexponential property and the Kesten bounds hold 
uniformly in $\lambda \in [c^{-1},1]$, i.e., with 
$\overline \nu_\lambda (x) = 1 - \nu_\lambda(x)$
\begin{equation} \label{eq-subexp-n-uniform}
\lim_{x \to \infty} \sup_{\lambda \in [c^{-1},1]} \left| 
\frac{\overline {\nu^{*n}_\lambda}(x)}{\overline \nu_\lambda(x)} - n \right| =0,
\end{equation}
and for any $\varepsilon > 0$, there exists $K$, such that for all $n \in \N$ and 
$\lambda \in [c^{-1},1]$
\begin{equation} \label{eq-Kesten-uniform}
\overline {\nu^{*n}_{\lambda}} (x) \leq K ( 1 + \varepsilon)^n \overline \nu_\lambda(x).
\end{equation}
According to Theorem 3.35 and Theorem 3.39 (with $\tau \equiv n$) by Foss, Korshunov, and 
Zachary \cite{FKZ} both (\ref{eq-subexp-n-uniform}) and (\ref{eq-Kesten-uniform}) hold if
\begin{equation} \label{eq-subexp-uniform}
\lim_{x \to \infty} \sup_{\lambda \in [c^{-1},1]} \left|
\frac{\overline{ \nu_\lambda * \nu_\lambda} (x)}{\overline \nu_\lambda(x)} - 2 
\right| = 0. 
\end{equation}

Now we prove (\ref{eq-subexp-uniform}). Write
\begin{equation} \label{eq-nu2-decomp}
\frac{\overline{\nu_\lambda * \nu_\lambda}(x)}{\overline \nu_\lambda(x)}  =  
\int_1^{x-1} \frac{\overline \nu_\lambda(x-y)}{\overline \nu_\lambda(x)} \dd 
\nu_\lambda(y) + \frac{\overline \nu_\lambda(x-1)}{\overline \nu_\lambda(x)}.
\end{equation}
By the logarithmic periodicity of $M$ the second term can be written as
\[
\frac{\overline \nu_\lambda(x-1)}{\overline \nu_\lambda(x)} 
= \left( \frac{x}{x-1} \right)^\alpha 
\frac{M \left( c^{\alpha^{-1} \{ \log_c x^\alpha \}} \lambda^{1/\alpha} ( 1 - x^{-1}) 
\right)}
{M \left( c^{\alpha^{-1} \{ \log_c x^\alpha \}} \lambda^{1/\alpha} \right)},
\]
which goes to 1 uniformly in $\lambda$ due to the continuity of $M$ (a continuous 
function is uniformly continuous on compacts).
In order to handle the first term in (\ref{eq-nu2-decomp}) choose $\delta > 0$ 
arbitrarily small, and $K$ so large that 
\[
\sup_{\lambda \in [c^{-1},1]} \overline \nu_\lambda(K) < \delta.
\]
As before
\[
\int_1^K \left| \frac{\overline \nu_\lambda(x-y)}{\overline \nu_\lambda(x)} - 1
\right| \dd \nu_\lambda(y) \to 0,
\]
uniformly in $\lambda \in [c^{-1},1]$. We show that the integral on $[K, \infty)$ is 
small. 
Putting $C = \sup_{y} M(y) / \inf_{y} M(y)$ and integrating by parts
\begin{equation} \label{eq-uniform-aux1}
\begin{split}
& \int_K^{x- 1} \frac{\overline \nu_\lambda(x-y) }{\overline \nu_\lambda(x)} \dd 
\nu_\lambda(y) 
 \leq C \int_K^{x-1} \left( \frac{x}{x-y} \right)^{\alpha} \dd \nu_\lambda(y) \\
& = C \left[ \overline \nu_\lambda(K) \left( \frac{x}{x-K} \right)^{\alpha}
- \overline \nu_\lambda(x-1) x^\alpha + 
x^{\alpha} \int_K^{x-1} \overline \nu_\lambda(y) \alpha (x-y)^{-\alpha -1} \dd y \right].
\end{split}
\end{equation}
The first term in the bracket is small. The uniform continuity of $M$ on 
compact sets, and its strict positivity implies that there is $\delta'>0$ small 
enough, such that for all $y \in [c^{-1/\alpha}, c^{1/\alpha}]$
\[
\begin{split}
(1- \delta) M(y) & \leq \inf_{0 \leq u \leq \delta'} M( (1-u) y ) \\
& \leq  \sup_{0 \leq u \leq \delta'} M( (1-u) y ) \leq (1 + \delta) M(y). 
\end{split}
\]
Thus, using also that $\int_{\delta'}^1 u^{-\alpha-1} (1-u)^{-\alpha} \dd u < \infty$, we 
obtain
\[
\begin{split}
& x^{\alpha} \int_K^{x-1} \overline \nu_\lambda(y) \alpha (x-y)^{-\alpha -1} \dd y \\
& = \frac{\alpha}{x^{\alpha} M(\lambda^{1/\alpha})} \int_{1/x}^{1- K/x} 
\frac{M\left( (1-u) c^{\alpha^{-1} \{\log_c x^\alpha \} }\lambda^{1/\alpha} \right) }
{u^{1+\alpha} (1-u)^{\alpha}} \dd u \\
& = \frac{\alpha}{x^{\alpha} M(\lambda^{1/\alpha})} \int_{x^{-1}}^{\delta'} 
\frac{M \left( (1-u) c^{\alpha^{-1} \{\log_c x^\alpha \} }  
\lambda^{1/\alpha} \right) }{u^{\alpha+1} (1-u)^{\alpha}} \dd u + O(x^{-\alpha}) \\
& \leq \frac{1 + \delta}{(1-\delta')^\alpha} 
\frac{M(x \lambda^{1/\alpha})}{M(\lambda^{1/\alpha})} + O(x^{-\alpha}).
\end{split}
\]
The lower bound follows similarly. Substituting back into (\ref{eq-uniform-aux1})
\[
\limsup_{x \to \infty} \sup_{\lambda \in [c^{-1},1]}
\int_K^{x- 1} \frac{\overline \nu_\lambda(x-y) }{\overline \nu_\lambda(x)} \dd 
\nu_\lambda(y) \leq C \left[ \delta +  
\max \left\{ \frac{1+\delta}{(1- \delta')^\alpha} -1 , \delta \right\} \right].
\]
Since $\delta > 0$ and $\delta'>0$ are arbitrarily small, the statement follows.

\subsection{A technical result used in the proof of Proposition~\ref{prop-genav}}
\label{subsec-lemma1}

\begin{lemma} \label{lemma-poly}
Put $g = 1_{[e^{-1},1]}$. For any $\delta > 0$ and $\varepsilon > 0$ 
there exist polynomials $Q_1$ and $Q_2$ such that
\[
Q_1(x) \leq g(x) \leq Q_2(x), \quad x \in [0,1],
\]
and for any measure
$\mu$ on $(0,\infty)$ such that $\int_0^\infty e^{-x} \mu(\dd x) < \infty$,
\[
\begin{split}
& \int_0^\infty \left[ Q_2(e^{-x}) - g(e^{-x} ) \right] \mu(\dd x) < 
\varepsilon \int_0^\infty e^{-x} \mu(\dd x) + 
\mu((1-\delta, 1+\delta)) \\
& \int_0^\infty \left[ g(e^{-x}) - Q_1(e^{-x}) \right] \mu (\dd x) < 
\varepsilon \int_0^\infty e^{-x} \mu(\dd x) +  
\mu((1-\delta, 1+\delta)).
\end{split}
\]
\end{lemma}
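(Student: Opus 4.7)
The plan is to sandwich $g$ between two continuous functions that agree with $g$ outside a small neighbourhood of the jump at $e^{-1}$, and then approximate these continuous functions uniformly by polynomials in such a way that the error is controlled by $\varepsilon y$ rather than just $\varepsilon$. This weighting is essential because after the substitution $y=e^{-x}$, an error of order $y=e^{-x}$ integrated against $\mu$ gives exactly the term $\varepsilon\int_0^\infty e^{-x}\,\mu(\mathrm{d}x)$ on the right-hand side.

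First I would define continuous functions $g_1\le g\le g_2$ on $[0,1]$, each taking values in $[0,1]$. Set $g_2(y)=1$ on $[e^{-1},1]$, $g_2(y)=0$ on $[0,e^{-1-\delta}]$, and linear interpolation between; similarly $g_1(y)=0$ on $[0,e^{-1}]$, $g_1(y)=1$ on $[e^{-1+\delta},1]$, and linear interpolation between (if $\delta\ge1$, use any auxiliary $\delta'<1$, the estimate is only improved). Translating through $y=e^{-x}$, one checks that $g_2(e^{-x})-g(e^{-x})$ is supported in $(1,1+\delta)$ and $g(e^{-x})-g_1(e^{-x})$ in $(1-\delta,1)$, and both differences are bounded by $1$; so integrating against $\mu$ contributes at most $\mu((1-\delta,1+\delta))$.

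The key step is to pass from $g_i$ to polynomials with a weighted error. The crucial observation is that the functions $g_2(y)/y$ and $g_1(y)/y$, extended by the value $0$ at $y=0$, are continuous and bounded on $[0,1]$: near $y=0$ the numerator is identically $0$ by construction, and elsewhere $y$ is bounded away from $0$. By the Weierstrass approximation theorem, choose a polynomial $P$ with $\bigl|P(y)-g_2(y)/y\bigr|<\varepsilon/2$ uniformly on $[0,1]$, and set $Q_2(y)=y\bigl(P(y)+\varepsilon/2\bigr)$. Then $Q_2$ is a polynomial, $Q_2(y)\ge g_2(y)\ge g(y)$ on $[0,1]$, and $0\le Q_2(y)-g_2(y)\le\varepsilon y$. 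The same construction with the opposite sign shift yields a polynomial $Q_1(y)=y\bigl(P_1(y)-\varepsilon/2\bigr)$ with $Q_1\le g_1\le g$ and $0\le g_1(y)-Q_1(y)\le\varepsilon y$.

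Finally I would assemble the estimates. Writing $Q_2-g=(Q_2-g_2)+(g_2-g)$ and integrating against $\mu$, the first term contributes at most $\varepsilon\int_0^\infty e^{-x}\,\mu(\mathrm{d}x)$ (using $Q_2(e^{-x})-g_2(e^{-x})\le\varepsilon e^{-x}$) and the second contributes at most $\mu((1-\delta,1+\delta))$; symmetrically for $g-Q_1$. There is no serious obstacle here; the only mild subtlety is arranging that $g_i(y)/y$ is continuous at $0$, which is handled precisely by choosing the continuous sandwiches $g_i$ to vanish identically on a neighbourhood of the origin—everything else is bookkeeping and the standard Weierstrass theorem.
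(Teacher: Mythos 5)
Your proposal is correct and takes essentially the same approach as the paper: sandwich $g$ between continuous functions $g_1 \le g \le g_2$ that vanish on a neighbourhood of $0$, approximate by polynomials in a way that makes the approximation error a multiple of $y$, and then convert the excess near the jump at $y=e^{-1}$ into the $\mu((1-\delta,1+\delta))$ term. The only (cosmetic) difference is that you apply Weierstrass to $g_i(y)/y$ and multiply back by $y$, whereas the paper defines its auxiliary $g_2$ already divided by $x$ (equal to $x^{-1}$ on $[e^{-1},1]$ and vanishing near $0$); the weighted-error mechanism is identical.
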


\begin{proof}
Fix $\varepsilon > 0$ and $\delta > 0$. Let
\[
g_2 (x) =
\begin{cases}
0, & x \leq e^{-1} - \delta', \\
\frac{e}{\delta'} (x - e^{-1} + \delta'), & x \in [e^{-1} - \delta', e^{-1}], \\
x^{-1}, & x \in [e^{-1}, 1],
\end{cases}
\]
where $\delta' > 0$ is chosen such that $- \log (e^{-1} - \delta') < 1 + \delta$.
Then $g_2$ is a continuous function on $[0,1]$, and $x g_2(x) \geq g(x)$. 
By the approximation theorem of Weierstrass, there is a polynomial $r_2(x)$, such that
\[
\sup_{x \in [0,1]} | r_2(x) - (g_2(x) + \varepsilon/2) | \leq \frac{\varepsilon}{2}. 
\]
Let $Q_2(x) = x r_2(x)$. By the choice of $r_2$
\[
 0 \leq Q_2(x) - x g_2(x) \leq \varepsilon x.
\]
Moreover,
\[
\begin{split}
Q_2(x) - g(x) & =  Q_2(x) - x g_2(x) + x g_2(x) - g(x) \\
& \leq \varepsilon x + 1_{[e^{-1} - \delta', e^{-1}]}(x).
\end{split}
\]
Therefore
\[
\begin{split}
\int_0^\infty \left[ Q_2(e^{-x}) - g(e^{-x}) \right] \mu(\dd x) 
& \leq \varepsilon \int_0^\infty e^{-x} \mu(\dd x) + 
\mu \left( [1, - \log (e^{-1} - \delta')] \right) \\
& \leq \varepsilon \int_0^\infty e^{-x} \mu(\dd x) + \mu ( [1, 1+ \delta)).
\end{split}
\]

The construction of $Q_1$ is similar. Choose
\[
g_1(x) =
\begin{cases}
0, & x \leq e^{-1}, \\
(\delta')^{-1} (e^{-1} + \delta')^{-1} (x-e^{-1}), & x \in [e^{-1}, e^{-1}+\delta'], \\
x^{-1}, & x \geq e^{-1} + \delta',
\end{cases}
\]
and let $r_1$ be a polynomial such that
\[
\sup_{x \in [0,1]} | r_1(x)  - (g_1(x) -\varepsilon/2)| \leq \frac{\varepsilon}{2}.
\]
The same proof shows that $Q_1(x) = x r_1(x)$ satisfies the stated properties.
\end{proof}


\subsection{Verifying that~\eqref{eq:alphan} holds}
\label{subsec-lemma2}

To ease the notation put
\[
a = \frac{2\pi\alpha}{\log c}. 
\]
Then, recall
\[
\xi_n = \xi(n) = \frac{1}{2n^{\alpha}} \left( 1+ 2\eps \sin (a \log n) \right).
\]
The first derivative is
$$
\xi'(n) = -\frac{1}{2n^{\alpha+1}}
\left( \alpha (1+ 2\varepsilon \sin (a \log n))- 2 \varepsilon a \cos (a \log n) \right)  
< 0,
$$
whenever $\varepsilon$ is small enough. Long but straightforward calculation gives
\begin{equation} \label{eq:Dxi-Taylor}
\Delta \xi_n = \xi_{n} - \xi_{n-1} =
\frac{n^{-\alpha -1}}{2} \left[ x_0(n) + \frac{x_1(n)}{n} + \frac{x_2(n)}{n^2} + O(n^{-3})
\right],
\end{equation}
where
\[
x_0(n) = \alpha ( 1 + 2 \varepsilon \sin ( a \log n) )  - 2 \varepsilon a \cos (a \log n),
\]
and
\[
x_i(n) = c^i_0 + c^i_1 \sin (a \log n) + c^i_2 \cos (a \log n), \quad i = 1,2,
\]
where $c^i_j$ are constants, whose actual value is not important for us. Note that 
$x_0(n)$ comes from the first derivative, and we use it frequently that 
\begin{equation} \label{eq:x0>0}
x_0(n) \geq \alpha - 2 \varepsilon (a + \alpha ) > 0
\end{equation}
for $\varepsilon> 0$ small enough. From \eqref{eq:Dxi-Taylor} we deduce that
\[
\frac{\Delta \xi_{n-2}}{\Delta \xi_{n-1}} = 1 + \frac{\alpha_n}{n} + \frac{r_n}{n^2}
+ R_n, 
\]
with $R_n = O(n^{-3})$, and
\begin{equation*}
\alpha_n = H_1( a \log n), \quad 
r_n = H_2( a \log n), 
\end{equation*}
where
\begin{equation} \label{eq:Hs}
\begin{split}
H_1(x) & = 1 + \alpha - 2 \varepsilon a 
\frac{a \sin x + \alpha \cos x}
{\alpha ( 1 + 2 \varepsilon \sin x) - 2 \varepsilon a \cos x} \\
H_2(x) & = \frac{a_0^{2} + a_1^{2} \sin x + a_2^2 \sin (2x) + b_1^{2} \cos x
+  b_2^{2} \cos (2x)}
{(\alpha ( 1 + 2 \varepsilon \sin x) - 2 \varepsilon a \cos x)^2}
\end{split}
\end{equation}
with some constants $a_j^{2}, b_j^{2}$, whose value is not important. By \eqref{eq:x0>0} 
the denominators in $H_1, H_2$ are strictly positive, therefore $H_1$ and $H_2$ are 
continuous smooth ($C^{\infty}$) functions. This implies that
$\alpha_n = 1 + \alpha + O(\varepsilon)$, $r_n=O(1)$,
\[
\alpha_n - \alpha_{n-1} = O(n^{-1}), \ 
\alpha_{n-1} + \alpha_{n+1} - 2 \alpha_n= O(n^{-2}), \
r_n - r_{n-1} = O(n^{-1}). 
\]
This is everything we need for the construction of $f_\eps$ in Subsection 
\ref{subsec-defdyn}.

\subsection{Distortion properties for $F$}

Let $J_n := [ (\xi_n+1)/2, (\xi_{n-1}+1)/2 )$ be the intervals on which $F := F_\eps$ is 
continuous. 

\begin{lemma} \label{lemma-distfn}
There exists $K>0$ such that $\frac{F''(x)}{F'(x)^2} \leq K$ for all $n$ and all $x \in 
J_n$. In particular, $F|_{J_n}$ can be extended to $\overline{J_n}$ for each $n$ so that 
$\frac{F''(x)}{F'(x)^2} \leq K$ for all $x \in \overline J_n$.
\end{lemma}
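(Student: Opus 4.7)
The plan is to write $F|_{J_n}$ as a composition and apply the chain-rule identity for the second derivative of a composition. Explicitly, $F|_{J_n} = f_{\eps,2} \circ f_{\eps,3} \circ \cdots \circ f_{\eps,n} \circ L$ where $L(x) = 2x-1$; setting $y_0 = x$, $y_1 = 2x-1 \in [\xi_n, \xi_{n-1}]$, and $y_k = f_{\eps, n-k+2}(y_{k-1}) \in [\xi_{n-k+1}, \xi_{n-k}]$ for $k = 2, \ldots, n$, the standard identity yields
\[
\frac{F''(x)}{F'(x)^2} \;=\; \sum_{k=2}^{n} \frac{f_{\eps, m_k}''(y_{k-1})}{\bigl(f_{\eps, m_k}'(y_{k-1})\bigr)^{2}\,\Pi_k(y_k)}, \qquad m_k := n-k+2,
\]
where $\Pi_k(y_k) := \prod_{i=k+1}^{n} f_{\eps, n-i+2}'(y_{i-1})$ is the derivative at $y_k$ of $G_k := f_{\eps, 2} \circ \cdots \circ f_{\eps, m_k-1}$, which maps $[\xi_{m_k-1}, \xi_{m_k-2}]$ onto $[1/2, 1]$; the $k=1$ term drops since $L'' \equiv 0$.

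For the numerator, each $f_{\eps, m}$ is a quadratic polynomial, so $f_{\eps, m}'' \equiv A_m/(\xi_{m-1}-\xi_m)$; combining $A_m = O(m^{-2})$ from Subsection~\ref{subsec-lemma2} with $\xi_{m-1}-\xi_m = \Theta(m^{-\alpha-1})$ from \eqref{eq-Deltaxi} gives $|f_{\eps, m}''| = O(m^{\alpha-1})$, and both endpoint values of $f_{\eps, m}'$ equal $1 + (1+\alpha)/m + O(\eps/m) + O(m^{-2})$, so $f_{\eps, m}' \geq 1$ on $[\xi_m, \xi_{m-1}]$ for $\eps$ small. The key step is the matching lower bound $\Pi_k(y_k) \geq c\, m_k^{1+\alpha - C\eps}$. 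To obtain it, I would use that $f_{\eps, j}'$ is affine on $[\xi_j, \xi_{j-1}]$ with endpoint values $1 + \alpha_j/j + B_j$ and $1 + \alpha_{j-1}/(j-1) + B_{j-1}$ (via the defining relation for $A_j$); plugging in $\alpha_j = 1+\alpha + O(\eps) + O(j^{-1})$ and $B_j = O(j^{-2})$ yields uniformly in $y$
\[
\log f_{\eps, j}'(y) \;=\; \frac{1+\alpha}{j} + O(\eps/j) + O(j^{-2}),
\]
and summing over $j = 2, \ldots, m_k - 1$ delivers the claimed lower bound for $\Pi_k(y_k)$.

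Substituting, the $k$-th summand becomes $O(m_k^{-2 + C\eps})$, so for $\eps$ small enough that $C\eps < 1$ the sum over $k$ is bounded uniformly in $n$ and in $x \in J_n$, proving $F''(x)/F'(x)^2 \leq K$ on $J_n$. All the ingredients extend continuously to the endpoints, which gives the closed-interval version for free. The main obstacle will be pinning down the leading coefficient in the expansion of $\log f_{\eps, j}'$: a cheap bound like $f_{\eps, j}' \geq 1 + c/j$ with an unspecified $c > 0$ would only produce $\Pi_k \gtrsim m_k^c$, and unless $c > \alpha$ the summands $m_k^{\alpha-1-c}$ fail to be summable. The argument therefore rests critically on the coefficient being \emph{exactly} $(1+\alpha)/j$, which is what the careful tuning of $A_n$ and $B_n$ carried out in Subsection~\ref{subsec-lemma2} is designed to secure.
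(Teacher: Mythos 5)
Your proof is correct and rests on the same composition identity $D(g\circ h)=D(g)\circ h + D(h)/(g'\circ h)$ that the paper applies inductively; you simply unroll the recursion into an explicit sum and, instead of merely using that $D(f_{\eps,m})$ is uniformly bounded as the paper does, you keep the sharper decay $D(f_{\eps,m})=O(m^{\alpha-1})$, which trades the paper's summability condition $(1+\alpha)(1-\delta)>1$ for your slightly more forgiving $C\eps<1$. Your concluding remark that the leading coefficient $(1+\alpha)/j$ must exceed $\alpha$ for summability is accurate under your accounting; note the paper's version, which only uses boundedness of the single-branch distortion, needs the stronger $c>1$, which the tuning still delivers.
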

\begin{proof}Given the map $f_{\eps, n}$ in Subsection~\ref{subsec-defdyn}, it is easy to 
see that for $x \in [\xi_{n-1}, \xi_n]$, $n\ge 1$,
$$
f_{\eps,n}''(x) = \frac{A_n}{\xi_{n-1}-\xi_n} = O(n^{\alpha-1})
\quad \text{ and } \quad |f_{\eps, n}'(x)-(1+\frac{\alpha_n}{n})| = O(n^{-2}),
$$
where the derivatives at the end-points are interpreted as one-sided derivatives.
From \eqref{eq:Hs} at the end of the previous subsection, we know that 
$\alpha_n = 1+\alpha+  O(\eps)$
as $n \to \infty$ and $\eps \to 0$.
Since $\alpha > 0$, we can choose $\delta > 0$ small enough such that
$(1 + \alpha ) (1 - \delta ) > 1$. For $n$ large enough
$f_{\eps,n}'(x) \geq 1+ \frac1n(1+\alpha)(1-\delta) > 1$.
It follows that
\begin{equation*}
D(f_{\eps,n}) := \frac{f_{\eps,n}''}{(f_{\eps,n}')^2} \text{ is uniformly bounded.}
\end{equation*}
Next, compute that for any two $C^2$ functions $g,h$,
$$
D(g \circ h) = D(g) \circ h + \frac{1}{g'\circ h} D(h).
$$
Applying this to $g = f^{n-1}$ and $h = f$, gives
$$
D(f^n) = D(f^{n-1}) \circ f + \frac{1}{(f^{n-1})' \circ f} D(f).
$$
Write $x_k = f_\eps^k(x)$ for $k \geq 0$. For some $C = C(\delta) > 0$
\begin{eqnarray*}
(f_\eps^{n-1})'(x) 
& = & f_\eps'(x_{n-2}) f_\eps'(x_{n-3}) \cdots f_\eps'(x_0)  \\
& \geq & C \left( 1+\frac{(1+\alpha)(1- \delta )}{n-1} \right)  
\left( 1+\frac{(1+\alpha)(1- \delta )}{n-2} \right) 
\cdots 2 \\
&=& 2 C \exp\left(\sum_{k=2}^n \log  \left[ 1+\frac{(1+\alpha)(1- \delta )}{k-1} 
\right] \right) 
\\
&\sim & 2 C \exp(((1+\alpha)(1- \delta ))\log n + C_n) \\
& \geq & C' n^{(1+\alpha)(1- \delta )},
\end{eqnarray*}
where $(C_n)$ is a bounded sequence and $C' >0$. We get 
$$
D(f^n) \leq  D(f^{n-1}) \circ f + \frac{1}{C' n^{(1+\alpha)(1- \delta )} } D(f).
$$
By induction, 
$$
D(F|_{J_n}) \leq  D(f^n|_{J_n}) \ll D(f) \sum_{k=2}^{n-1} 
\frac{1}{C' k^{(1+\alpha)(1- \delta )}},
$$
which is bounded in $n$ since the exponent $(1+\alpha)(1- \delta ) > 1$.
\end{proof}

\bigskip
\noindent \textbf{Acknowledgements.}
PK's research was supported by the J\'anos Bolyai Research Scholarship of the Hungarian 
Academy of Sciences, and by the NKFIH grant FK124141. DT would like to thank CNRS for enabling her a three month visit to
IMJ-PRG, Pierre et Marie Curie University, where  her research on this project began.

\bibliographystyle{abbrv}
\bibliography{DKlike}

\begin{thebibliography}{10}

\bibitem{Aaronson}
J.~Aaronson.
\newblock {\em An introduction to infinite ergodic theory}, volume~50 of {\em
  Mathematical Surveys and Monographs}.
\newblock American Mathematical Society, Providence, RI, 1997.

\bibitem{AaronsonDenker01}
J.~Aaronson and M.~Denker.
\newblock Local limit theorems for partial sums of stationary sequences
  generated by {G}ibbs-{M}arkov maps.
\newblock {\em Stoch. Dyn.}, 1(2):193--237, 2001.

\bibitem{Bingham1}
N.~H. Bingham.
\newblock Limit theorems for occupation times of {M}arkov processes.
\newblock {\em Z. Wahrscheinlichkeitstheorie und Verw. Gebiete}, 17:1--22,
  1971.

\bibitem{Bingham2}
N.~H. Bingham.
\newblock On the limit of a supercritical branching process.
\newblock {\em J. Appl. Probab.}, Special Vol. 25{\rm A}:215--228, 1988.
\newblock A celebration of applied probability.

\bibitem{BGT}
N.~H. Bingham, C.~M. Goldie, and J.~L. Teugels.
\newblock {\em Regular variation}, volume~27 of {\em Encyclopedia of
  Mathematics and its Applications}.
\newblock Cambridge University Press, Cambridge, 1989.

\bibitem{BTT}
H.~Bruin, D.~Terhesiu, and M.~Todd.
\newblock The pressure function for infinite equilibrium measures.
\newblock Available at arXiv: https://arxiv.org/abs/1711.05069, 2017.

\bibitem{BT12}
H.~Bruin and M.~Todd.
\newblock Transience and thermodynamic formalism for infinitely branched
  interval maps.
\newblock {\em J. Lond. Math. Soc. (2)}, 86(1):171--194, 2012.

\bibitem{BT15}
H.~Bruin and M.~Todd.
\newblock Wild attractors and thermodynamic formalism.
\newblock {\em Monatsh. Math.}, 178(1):39--83, 2015.

\bibitem{Csorgo1}
S.~Cs\"org\H{o}.
\newblock A probabilistic approach to domains of partial attraction.
\newblock {\em Adv. in Appl. Math.}, 11(3):282--327, 1990.

\bibitem{CM1}
S.~Cs\"org\H{o} and Z.~Megyesi.
\newblock Merging to semistable laws.
\newblock {\em Teor. Veroyatnost. i Primenen.}, 47(1):90--109, 2002.

\bibitem{Csorgo}
S.~Cs{\"o}rg{\H{o}}.
\newblock Rates of merge in generalized {S}t.\ {P}etersburg games.
\newblock {\em Acta Sci. Math. (Szeged)}, 68(3-4):815--847, 2002.

\bibitem{DarlingKac57}
D.~A. Darling and M.~Kac.
\newblock On occupation times for {M}arkoff processes.
\newblock {\em Trans. Amer. Math. Soc.}, 84:444--458, 1957.

\bibitem{EGV}
P.~Embrechts, C.~M. Goldie, and N.~Veraverbeke.
\newblock Subexponentiality and infinite divisibility.
\newblock {\em Z. Wahrsch. Verw. Gebiete}, 49(3):335--347, 1979.

\bibitem{Feller}
W.~Feller.
\newblock {\em An introduction to probability theory and its applications.
  {V}ol. {II}}.
\newblock Second edition. John Wiley \& Sons, Inc., New York-London-Sydney,
  1971.

\bibitem{FKZ}
S.~Foss, D.~Korshunov, and S.~Zachary.
\newblock {\em An introduction to heavy-tailed and subexponential
  distributions}.
\newblock Springer Series in Operations Research and Financial Engineering.
  Springer, New York, second edition, 2013.

\bibitem{GW88}
P.~Gaspard and X.-J. Wang.
\newblock Sporadicity: between periodic and chaotic dynamical behaviors.
\newblock {\em Proc. Nat. Acad. Sci. U.S.A.}, 85(13):4591--4595, 1988.

\bibitem{GilPelaez}
J.~Gil-Pelaez.
\newblock Note on the inversion theorem.
\newblock {\em Biometrika}, 38:481--482, 1951.

\bibitem{Gouezel04}
S.~Gou\"ezel.
\newblock Sharp polynomial estimates for the decay of correlations.
\newblock {\em Israel J. Math.}, 139:29--65, 2004.

\bibitem{Gouezel11}
S.~Gou\"ezel.
\newblock Correlation asymptotics from large deviations in dynamical systems
  with infinite measure.
\newblock {\em Colloq. Math.}, 125:193--212, 2011.

\bibitem{Gou15}
S.~Gou\"ezel.
\newblock Limit theorems in dynamical systems using the spectral method.
\newblock In {\em Hyperbolic dynamics, fluctuations and large deviations},
  volume~89 of {\em Proc. Sympos. Pure Math.}, pages 161--193. Amer. Math.
  Soc., Providence, RI, 2015.

\bibitem{KW}
P.~Kern and L.~Wedrich.
\newblock On exact {H}ausdorff measure functions of operator semistable
  {L}\'evy processes.
\newblock {\em Stoch. Anal. Appl.}, 35(6):980--1006, 2017.

\bibitem{PK1}
P.~Kevei.
\newblock Merging asymptotic expansions for semistable random variables.
\newblock {\em Lith. Math. J.}, 49(1):40--54, 2009.

\bibitem{PK2}
P.~Kevei.
\newblock Regularly log-periodic functions and some applications.
\newblock Available at arXiv: https://arxiv.org/abs/1709.01996, 2017.

\bibitem{KC}
P.~Kevei and S.~Cs\"org\H{o}.
\newblock Merging of linear combinations to semistable laws.
\newblock {\em J. Theoret. Probab.}, 22(3):772--790, 2009.

\bibitem{Korevaar}
J.~Korevaar.
\newblock {\em Tauberian theory}, volume 329 of {\em Grundlehren der
  Mathematischen Wissenschaften [Fundamental Principles of Mathematical
  Sciences]}.
\newblock Springer-Verlag, Berlin, 2004.
\newblock A century of developments.

\bibitem{Kruglov}
V.~M. Kruglov.
\newblock On the extension of the class of stable distributions.
\newblock {\em Theory Probab. Appl.}, 17(4):685--694, 1972.

\bibitem{MS}
M.~M. Meerschaert and H.-P. Scheffler.
\newblock {\em Limit distributions for sums of independent random vectors}.
\newblock Wiley Series in Probability and Statistics: Probability and
  Statistics. John Wiley \& Sons, Inc., New York, 2001.
\newblock Heavy tails in theory and practice.

\bibitem{M}
Z.~Megyesi.
\newblock A probabilistic approach to semistable laws and their domains of
  partial attraction.
\newblock {\em Acta Sci. Math. (Szeged)}, 66(1-2):403--434, 2000.

\bibitem{MT12}
I.~Melbourne and D.~Terhesiu.
\newblock Operator renewal theory and mixing rates for dynamical systems with
  infinite measure.
\newblock {\em Invent. Math.}, 189(1):61--110, 2012.

\bibitem{MT13}
I.~Melbourne and D.~Terhesiu.
\newblock First and higher order uniform dual ergodic theorems for dynamical
  systems with infinite measure.
\newblock {\em Israel J. Math.}, 194(2):793--830, 2013.

\bibitem{PM80}
Y.~Pomeau and P.~Manneville.
\newblock Intermittent transition to turbulence in dissipative dynamical
  systems.
\newblock {\em Comm. Math. Phys.}, 74(2):189--197, 1980.

\bibitem{Rosen}
B.~Ros\'en.
\newblock On the asymptotic distribution of sums of independent indentically
  distributed random variables.
\newblock {\em Ark. Mat.}, 4:323--332 (1962), 1962.

\bibitem{Sarig02}
O.~Sarig.
\newblock Subexponential decay of correlations.
\newblock {\em Invent. Math.}, 150(3):629--653, 2002.

\bibitem{Sato}
K.-i. Sato.
\newblock {\em L\'evy processes and infinitely divisible distributions},
  volume~68 of {\em Cambridge Studies in Advanced Mathematics}.
\newblock Cambridge University Press, Cambridge, 1999.
\newblock Translated from the 1990 Japanese original, Revised by the author.

\bibitem{ShiWat}
T.~Shimura and T.~Watanabe.
\newblock Infinite divisibility and generalized subexponentiality.
\newblock {\em Bernoulli}, 11(3):445--469, 2005.

\bibitem{Thaler83}
M.~Thaler.
\newblock Transformations on {$[0,\,1]$}\ with infinite invariant measures.
\newblock {\em Israel J. Math.}, 46(1-2):67--96, 1983.

\bibitem{ThalerZweimuller06}
M.~Thaler and R.~Zweim\"uller.
\newblock Distributional limit theorems in infinite ergodic theory.
\newblock {\em Probab. Theory Related Fields}, 135(1):15--52, 2006.

\bibitem{Zweimuller04}
R.~Zweim\"uller.
\newblock Hopf's ratio ergodic theorem by inducing.
\newblock {\em Colloq. Math.}, 101(2):289--292, 2004.

\end{thebibliography}

\end{document}